\documentclass[english]{article}

\usepackage{preamble}
\usepackage{mymacros}

\begin{document}

\title{Semiadditive Power Operations in Chromatic Height 1}
\author{Beckham Myers\thanks{Einstein Institute of Mathematics, Hebrew University of Jerusalem.}
}

\maketitle
\begin{abstract}
We recast the theory of power operations in chromatic height $1$ using higher semiadditivity. In particular, we show that if the height $1$ cyclotomic extension of a commutative algebra $R$ is Galois, then power operations on $R$-algebras are exactly $\delta$-ring operations. 

We also show that semiadditive power operations on chromatically localized algebraic $K$-theory agree with power operations defined using polynomial functoriality. As an application, we deduce that the map from the $K$-theory of a $\QQ$-linear symmetric monoidal category to its $K(1)$-localization is a $\delta$-ring homomorphism on the level of underlying spaces. It is well known that operations on the $K$-theory of $\QQ$-linear symmetric monoidal categories are $\lambda$-ring operations, and since $\delta$-rings are `$p$-typical' $\lambda$-rings, this is an instance of the redshift philosophy.

\end{abstract}
\tableofcontents

\section{Introduction}
\subsection{Overview}
Given a commutative ring spectrum $R$, its coherent multiplication gives rise to \emph{power operations} that act on the homotopy groups of commutative $R$-algebras. Power operations are an accessible and important part of the structure of commutative ring spectra.

The algebraic theory of power operations depends heavily on the ring spectrum $R$. When $R=\QQ$, the homotopy groups of $\QQ$-algebras are simply graded-commutative rings: no new operations appear. When $R=\FF_p$, power operations are governed by {the Dyer-Lashof algebra.}

In higher algebra, the \emph{chromatic filtration} organizes the category of spectra by \emph{chromatic height}, providing an infinite collection of characteristics that interpolate between classical characteristic $0$ and classical characteristic $p$. It is thus natural to consider how the theory of power operations interacts with the chromatic filtration.

Power operations in chromatic height $1$ are entirely understood due to work of McClure and Bousfield (\cite{hoo,bousfield}). Less is known about power operations in chromatic heights $n>1$, and the fundamental tool available is an algebro-geometric interpretation of power operations over Lubin--Tate spectra due to Ando, Hopkins, and Strickland (\cite{ando95,strickland97,strickland98,sigma}). The algebraic theory of power operations over Lubin--Tate spectra was further studied in \cite{rezk_congruence,completed_operations,null}. 

The first goal of this work is to begin investigating power operations in chromatic homotopy theory using \emph{higher semiadditivity}.
\subsubsection{Higher Semiadditivity}
 It is a remarkable result of Hopkins--Lurie (in the $K(n)$-local case) and Carmeli--Schlank--Yanovski (in the $T(n)$-local case) that the chromatic localizations are \emph{$\infty$-semiadditive} (\cite{HL,teleambi}). This is a vast generalization of the isomorphism between the coinvariants and invariants of a finite group action in characteristic $0$: an $\infty$-category is {$\infty$-semiadditive} when there is a canonical isomorphism
\[\Nm\colon\varinjlim_AF\iso \varprojlim_A F\]
between colimits and limits indexed by $\pi$-finite spaces.

Many chromatic phenomena can be placed in the framework of higher semiadditivity. For example, \emph{semiadditive height} is an invariant of higher semiadditive categories that reproduces chromatic height when applied to the chromatic localizations (\cite{ambiheight}). Whereas the chromatic height of a complex-oriented ring spectrum is defined via the associated formal group, semiadditive height is an invariant that depends only on the canonical higher semiadditive structure of an $\infty$-category. Because they are categorical in nature, semiadditive constructions are flexible and general, and enjoy good formal properties. As a consequence, higher semiadditivity has become an important tool in chromatic homotopy theory, particularly in the study of the $T(n)$-local categories, which are less computationally accessible than the $K(n)$-local categories (e.g. \cite{cyclo,fourier,telescope}).
\pagebreak

Given the role of higher semiadditivity in studying the $T(n)$-local categories, it is natural to wonder if $\Sp_{T(n)}$ can be characterized as the \textit{universal} stable (presentable) $\infty$-category of semiadditive height $n$, denoted by $\tsadi_n$.\footnote{This question was posed in a previous version of \cite{ambiheight}.} A counterexample of Yuan shows that this is not the case:
\begin{theorem}[\cite{yuan}]
    There exists a commutative algebra $\SS_p^{\tsadi_1}\in\CAlg(\tsadi_1)$ whose underlying spectrum is not $T(1)$-local.
\end{theorem}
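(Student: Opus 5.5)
Yuan's counterexample rests on the fact that a height-$1$ semiadditive category only needs the element $p$ to behave like $|BC_p|$ cardinalities. The plan is to construct $\SS_p^{\tsadi_1}$ as the universal commutative algebra in $\tsadi_1$, i.e.\ the unit of $\tsadi_1$, and then compute its underlying spectrum.

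\textbf{Construction and identification.} Recall that $\tsadi_1$ is obtained from stable presentable $1$-semiadditive $p$-local categories by imposing semiadditive height exactly $1$. In practice this means inverting $|BC_p|$ and killing $p$-completion at height $0$. The unit therefore has a concrete model: start with the $p$-complete sphere equipped with its $C_p$-equivariant (or Frobenius) structure. Yuan realizes it as a homotopy fixed point of a Frobenius action. Concretely, I would write $\SS_p^{\tsadi_1}$ as the fiber of $\phi-1$ on a suitable $p$-complete $\EE_\infty$-ring, $\SS_p[\![\ldots]\!]$, obtained by adjoining the semiadditive cardinality $|BC_p|$ as an invertible element. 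The identification of its underlying spectrum then follows from Yuan's theorem that $\SS_p$ with its $\EE_\infty$-Frobenius is, $p$-adically, the fixed points of Frobenius.

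\textbf{Verifying the two properties.} Two things must be checked.
\begin{enumerate}
\item The resulting algebra lies in $\tsadi_1$. For this it suffices to show that multiplication by $|BC_p|$ is invertible on it and that it is $p$-complete, so that the height is $\le 1$ and $\ge 1$. Both reduce to computing $|BC_p|$ as $p$ times a unit modulo the Frobenius relation, via the transfer description.
\item The underlying spectrum is not $T(1)$-local. Its homotopy contains classes that $L_{K(1)}$ would not see. For example, its $\pi_0$ contains $\ZZ_p$ together with contributions detected by $L_{K(1)}\SS$ failing to map isomorphically, or its $\mathrm{tmf}$-type Hurewicz image in negative degrees differs. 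A clean test is to show that the map to its $K(1)$-localization is not an equivalence by exhibiting a non-vanishing $\pi_*$ in a degree where $\pi_*L_{K(1)}\SS$ vanishes. Alternatively, one can check that the fiber is nonzero using $K(0)$-acyclicity together with the nontrivial $\SS_p$-homotopy.
\end{enumerate}

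\textbf{Main obstacle.} The hardest step is the first: proving that $|BC_p|$ is invertible on the candidate algebra, i.e.\ that it has semiadditive height $\le1$, without it being $K(1)$-local. I would attack this with Yuan's integral model of $\EE_\infty$-Frobenius, where the power operation $\theta$ satisfies $|BC_p| = p - \theta$-type formulas that make invertibility reduce to a statement about $\pi_0$.
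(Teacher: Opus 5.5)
The paper gives no proof of this statement; it is quoted from Yuan. Your outline therefore has to stand on its own, and it does not: neither step is actually carried out, and the plan for the first step is misdirected.

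\textbf{Step (1) targets the wrong thing.} The unit $\SS_p^{\tsadi_1}$ lies in $\tsadi_1$ by definition, so checking that some candidate has height $1$ proves nothing about it. What must be shown is that your candidate is \emph{initial}, and the candidate is not even well defined.

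The cardinality $|BC_p|$ is not an element of an $\mathbb{E}_\infty$-ring spectrum that can be ``adjoined'' to $\SS_p$. It is a natural endomorphism that exists only once an object carries its $1$-semiadditive (higher commutative monoid) structure. The unit has to be understood inside higher commutative monoids: it is the $p$-completion of the $|BC_p|$-telescope on the unit of $p$-typical higher commutative monoids in $\Sp_{(p)}$, whose endomorphisms contain all the cardinalities $|BG|$. Results about the Tate-valued $\mathbb{E}_\infty$-Frobenius of $\SS_p$ in spectra do not touch this universal property.

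The computation you plan is also wrong as stated. If $|BC_p|$ were $p$ times a unit, its invertibility would make $p$ invertible on a $p$-complete object, forcing that object to be $0$. At height $1$ it is $|C_p|=p$ that is topologically nilpotent, while $|BC_p|$ is a unit; in $\Sp_{T(1)}$ one has $p|BC_p|=p$, as the paper remarks. There is also no relation of the form ``$|BC_p|=p-\theta$''.

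\textbf{Step (2) is only a list of things that might differ.} The actual reduction goes as follows. Write $\one=\SS_p^{\tsadi_1}$ and $U$ for the (conservative) underlying-spectrum functor.
\begin{enumerate}
\item For a $T(1)$-acyclic spectrum $Z$ one has $U\,\underline{\mathrm{map}}(Z,\one)\simeq\mathrm{map}(Z,U\one)$.
\item So if $U\one$ were $T(1)$-local, $\one$ would be local for the localization $\tsadi_1\to\tsadi_1\otimes\Sp_{T(1)}\simeq\Sp_{T(1)}$.
\item Then $\mathrm{End}(\one)\simeq\mathrm{End}(\SS_{T(1)})$. Moreover, everything built from $\one$ by $\pi$-finite (co)limits, tensor products and retracts would agree with its $T(1)$-local counterpart. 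This includes the cardinalities, $\one^{BC_p}$, and the cyclotomic extension $\one[\omega_p^{(1)}]$ together with its Galois maps.
\end{enumerate}
So you must exhibit one concrete such invariant of the unit that differs from its $T(1)$-local value. That requires an actual computation of at least part of $\SS_p^{\tsadi_1}$, which is what Yuan supplies. The paper records Yuan's observation that $\SS_p^{\tsadi_1}[\omega_p^{(1)}]$ is not Galois over $\SS_p^{\tsadi_1}$, whereas the $T(1)$-local cyclotomic extension is. Nothing in your proposal produces such an invariant, and a ``$\mathrm{tmf}$-type Hurewicz image'' has no bearing at height $1$.
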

Yuan also observes that in this case, the \emph{higher $p$th cyclotomic extension} $\SS_p^{\tsadi_1}[\omega_p^{(1)}]$ is not a Galois extension of $\SS_p^{\tsadi_1}$. This is an important property of $T(n)$-local commutative algebras---that their higher cyclotomic extensions are Galois---and it was studied in \cite{fourier} under the guise of \emph{virtual $\FF_p$-orientability}. This leads us to define the universal stable and presentable $ \infty$-category of height $n$ that is virtually $\FF_p$-orientable, which we denote by $\widehat\tsadi_n$ (\Cref{tsadibar}).

The following conjecture was posed by Tomer Schlank:
\begin{con}[T. Schlank]\label{conj}
    We have $\Sp_{T(n)}=\widehat\tsadi_n$. Namely, $\Sp_{T(n)}$ is characterized as the universal stable and presentable $\infty$-category that is virtually $\FF_p$-orientable of height $n$.
\end{con}
A positive answer to \Cref{conj} would give a characterization of the $T(n)$-local categories purely in terms of higher semiadditive structure, without any reference to the theory of formal groups.\footnote{We recently learned that Robert Burklund has unpublished work on \Cref{conj} in height $1$.}
\subsubsection{Semiadditive Power Operations in Height 1}
We now turn to the study of power operations and recall how higher semiadditivity can be used to produce interesting and nontrivial \emph{semiadditive power operations}. Given a finite group $G$ acting on $[n]$, there is a semiadditive power operation
\[x\longmapsto \int_{BG}x^n\]
 defined on the underlying spaces of commutative algebras in higher semiadditive $\infty$-categories.
Since they are defined purely formally, such operations exist very generally. Semiadditive power operations play a key role in the proof of the main result of \cite{teleambi}, and they were computed in certain cases in \cite{gwitt,keidar2025semiadditivealternatingpowerstwisted}.

There is a power operation $\theta$ on $E_n$-algebras that has a long history going back to work of Atiyah (\cite{atiyah}). Constructions of $\theta$ in all chromatic heights appear in work of Rezk and in \cite{mathew,staple}. In fact, the operation $\theta$ can be constructed as a semiadditive power operation:
\begin{proposition}[{\cite{teleambi,gwitt}, \Cref{theta_construction}}]
\label{theta_construction}
    Let $\cC$ be a $p$-complete and $1$-semiadditive symmetric monoidal stable $\infty$-category. For commutative algebras $R\in\CAlg(\cC)$, there exists a natural semiadditive power operation \[\theta\colon \underline R\too\underline R\] such that
    $\theta\colon\pi_0R\to\pi_0R$
    satisfies the equations
    \begin{enumerate}
    
       \item $\theta(1)=0$.
        \item
        $\theta(x+y)=\theta(x)+\theta(y)+\frac{x^p+y^p-(x+y)^p}{p}$ for all $x,y\in\pi_0R$.
    \end{enumerate}
\end{proposition}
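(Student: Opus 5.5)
The plan is to build $\theta$ from the semiadditive $p$th power operation attached to $C_p$ acting on $[p]$, namely
\[\alpha(x)=\int_{BC_p}x^p,\]
and to divide out the ``naive'' contribution $x^p$ by $p$. Concretely, I would define $\theta$ on $\underline R$ as follows. Write $|BC_p|=\int_{BC_p}1\in\pi_0R$ for the cardinality of $BC_p$. The standard identity for cardinalities of $\pi$-finite spaces gives $p\cdot|BC_p|=|C_p|\cdot|BC_p|=1$ after $1$-semiadditivity, so in a $p$-complete setting $|BC_p|$ plays the role of $1/p$. Using this, I would set
\[\theta(x)=\int_{BC_p}x^p-|BC_p|\,x^p ,\]
or, more precisely, a version of this natural in $R$. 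To get an operation on underlying spaces rather than just on $\pi_0$, I would view it as a map of spaces $\underline R\to\underline R$ built from the natural transformations $x\mapsto \int_{BC_p}x^p$ (the composite of the $C_p$-equivariant $p$-fold multiplication $\underline R\to (\underline R)^{hC_p}$ followed by the integration/norm map $R^{hC_p}\to R$) and $x\mapsto |BC_p|x^p$. Naturality in $R$ is then automatic, since both pieces are defined purely from the symmetric monoidal and semiadditive structure. Caveat: if $p\cdot|BC_p|=1$ does not hold on the nose in general, I would instead define $\theta$ via $\int_{BC_p}$ applied to $x^p$ minus $x^p$ restricted along $\mathrm{pt}\to BC_p$, and use that the transfer along $C_p\to e$ composed with restriction is multiplication by $p$.

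For property (1), $\theta(1)=\int_{BC_p}1-|BC_p|=|BC_p|-|BC_p|=0$, since $1^p=1$ carries the trivial $C_p$-action.

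Property (2) is the main computation. I would expand $(x+y)^p$ as a $C_p$-equivariant object. The $p$-fold power of a sum decomposes over $C_p$-orbits of words in $\{x,y\}^{p}$. The two fixed words contribute $\int_{BC_p}x^p+\int_{BC_p}y^p$. The free orbits, of which there are $(\binom{p}{k})/p$ for each $0<k<p$, contribute through $\int_{BC_p}$ of an induced object $\mathrm{Ind}_e^{C_p}$. Integrating an induced object over $BC_p$ is just integration over $\mathrm{pt}$ of the underlying monomial, by the compatibility of $\int$ with the fiber sequence $C_p\to\mathrm{pt}\to BC_p$. This gives
\[\int_{BC_p}(x+y)^p=\int_{BC_p}x^p+\int_{BC_p}y^p+\sum_{0<k<p}\tfrac1p\tbinom pk x^ky^{p-k}.\]
Subtracting $|BC_p|(x+y)^p$ and using $|BC_p|\binom pk=\frac1p\binom pk\cdot p|BC_p|=\frac1p\binom pk$ for the middle terms, the cross terms become $\frac1p\binom pk x^ky^{p-k}(1-1)$. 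What remains is
\[\theta(x)+\theta(y)+\tfrac{1}{p}\bigl(x^p+y^p-(x+y)^p\bigr),\]
where the correction term arises as $(\text{free-orbit sum})-|BC_p|\cdot(\text{free-orbit sum})\cdot p$ rewritten. I would check the signs carefully, since this is where the formula must match the statement.

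The main obstacle is making this orbit decomposition rigorous at the level of $\pi_0$ integrals. That means establishing additivity of $\int_{BC_p}$ under the decomposition of the $C_p$-equivariant object $(x+y)^{\otimes p}$, which uses distributivity of the tensor power over sums in a semiadditive category, and the identification of $\int_{BC_p}$ on induced pieces. I would cite \cite{teleambi,gwitt} for these standard properties of integration and cardinality, and for $|BC_p|$ being inverse to $p$ under $p$-completeness.
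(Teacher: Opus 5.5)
Your proof has a genuine gap at its starting point. The identity $p\cdot|BC_p|=1$ is false, and your definition of $\theta$ depends on it. Cardinalities are not multiplicative along the fiber sequence $C_p\to\mathrm{pt}\to BC_p$. What does hold is $1=|\mathrm{pt}|=\int_{BC_p}e_*(1)$ for $e\colon\mathrm{pt}\to BC_p$: one integrates the fiberwise cardinality class $e_*(1)\in\pi_0R^{BC_p}$, which is not the constant class $p$. Under your hypotheses the identity is in fact impossible unless $\cC=0$, because it would make $p$ invertible in a $p$-complete category. In the motivating chromatic height $1$ case one has $p|BC_p|=p$, and the picture ``$|BC_p|$ behaves like $1/p$'' is the height-$0$ picture. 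What $p$-completeness actually provides is that $p|BC_p|-1=-(1-p|BC_p|)$ is a unit, since $p$ is topologically nilpotent.

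As a result, your computation does not prove (2). Your orbit decomposition of $\int_{BC_p}(x+y)^p$ is correct; it is the additivity of $\alpha$ from \cite{teleambi} (\Cref{alpha}). For $\theta_0(x):=\int_{BC_p}x^p-|BC_p|x^p$ it gives
\[\theta_0(x+y)=\theta_0(x)+\theta_0(y)+\big(1-p|BC_p|\big)\cdot\frac{(x+y)^p-x^p-y^p}{p}.\]
If your identity held, this correction term would vanish and $\theta_0$ would be additive. So your ``what remains'' step contradicts your own cancellation of the cross terms. Your fallback $\int_{BC_p}x^p-x^p$ also fails. Its correction term is $(p-1)\frac{x^p+y^p-(x+y)^p}{p}$, so it needs an extra factor $\frac{1}{p-1}$ to satisfy (2). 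Its value at $x=1$ is $|BC_p|-1$, which is nonzero in general.

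The missing step is to solve for the right combination $a\int_{BC_p}x^p+b\,x^p$. Normalization forces $a|BC_p|+b=0$, and additivity forces $a+pb=-1$. This gives the paper's definition
\[\theta(x)=\frac{1}{p|BC_p|-1}\bigg(-|BC_p|\cdot x^p+\int_{BC_p}x^p\bigg).\]
Here $\theta(1)=0$ because $\int_{BC_p}1=|BC_p|$. Property (2) follows from the displayed formula for $\theta_0$ after dividing by $p|BC_p|-1$, since $\frac{1-p|BC_p|}{p|BC_p|-1}=-1$. The invertibility of that denominator is exactly where $p$-completeness enters. In height $1$, where $p|BC_p|=p$, the formula reduces to $\frac{1}{p-1}\big(\int_{BC_p}x^p-x^p\big)$.
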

\pagebreak
In chromatic height $1$, the operation $\theta$ satisfies the additional equation
\begin{enumerate}
       \setcounter{enumi}{2}
      \item $\theta(xy)=\theta(x)y^p + x^p\theta(y)+p\theta(x)\theta(y)$ for all $x,y\in\pi_0R$.\label{eq3}
    \end{enumerate}
A commutative ring equipped with an operation satisfying these three equations is called a \textit{$\delta$-ring}. In particular, for a $T(1)$-local commutative algebra $R$, $\pi_0R$ is a $\delta$-ring. We generalize this beyond the chromatic setting by relating the properties of $\theta$ to the higher semiadditive structure of $\Sp_{T(1)}$:
\begin{thmx}[{\Cref{theoremAproof}}]
\label{TheoremA}
    Let $\cC\in\CAlg(\Pr_\st)$ be $1$-semiadditive, and suppose $R\in\CAlg(\cC)$ has semiadditive height $1$. Further suppose that the cyclotomic extension $R[\omega_p^{(1)}]$ is a Galois extension of $R$. Then the operation $\theta$ of \Cref{theta_construction} is a $\delta$-ring operation. 
\end{thmx}

In general, the free commutative $R$-algebra $\Free_R^{\mathbb E_\infty}(t)$ on a degree-zero class $t$ classifies {power operations} that act on commutative $R$-algebras. Under the assumptions of \Cref{TheoremA}, we use the operation $\theta$ to give a complete description of such power operations. 
\begin{thmx}[{\Cref{theoremBproof}}]
\label{TheoremB}
     Let $\cC\in\CAlg(\Pr_\st)$ be $1$-semiadditive, and suppose $R\in\CAlg(\cC)$ has semiadditive height $1$. Further suppose that the cyclotomic extension $R[\omega_p^{(1)}]$ is a Galois extension of $R$. Then
    \[\pi_*\Free_R^{\mathbb E_\infty}(t)=(\pi_* R)[t,\theta(t),\theta^2(t),\dots],\]
    where $\theta$ is the operation of {\Cref{theta_construction}.}
\end{thmx}

In other words, $\pi_*\Free^{\mathbb E_\infty}_{R}(t)$ is the free $\delta$-ring over $\pi_*R$. This reproves and extends the description of power operations over $KU_p^\wedge$ due to McClure (\cite{hoo}) and stated in \cite{hopkins} for general $T(1)$-local commutative algebras.\footnote{We are not aware of a proof in the literature in the $T(1)$-local case, although the result is well known.} In place of arguments particular to $KU_p^\wedge$, our proof uses only higher semiadditive structure (and thus bypasses the descent from $KU_p^\wedge$ to $\SS_{T(1)}$).
\begin{rem}
    \Cref{TheoremB} is evidence for \Cref{conj}, as it shows that both $\Sp_{T(1)}$ and $\widehat\tsadi_1$ have the same theory of power operations.
\end{rem}

\subsubsection{Redshift and Semiadditive Power Operations}
Higher semiadditivity is also closely intertwined with the study of \emph{chromatically localized algebraic $K$-theory} (e.g. \cite{bms,descent}). There is a deep interaction between algebraic $K$-theory and the chromatic filtration on spectra, and a guiding principle in this interaction has been the \textit{redshift philosophy}, which roughly says that algebraic $K$-theory increases chromatic height by $1$ (\cite{AR02,AR08}). In recent years, several breakthroughs have collectively verified that $K$-theory indeed raises chromatic height by exactly $1$ for all commutative algebras (\cite{Land_2024,clausen2022descentvanishingchromaticalgebraic,null}).

Since algebraic $K$-theory raises chromatic height, one can approach chromatic height $n+1$ via the algebraic $K$-theory of height $n$ commutative ring spectra (e.g. \cite{cardinalities,telescope,benmoshe2025chromatichighersemiadditivityheight}). Algebraic $K$-theory is built via \emph{categorification}: the passage from a ring spectrum $R$ to its category of (left) modules $\LMod_R$. More conceptually, this suggests that chromatic height $n+1$ is related to the categorification of chromatic height $n$ (e.g. \cite{gwitt, keidar2025semiadditivealternatingpowerstwisted}).
\pagebreak

We next study semiadditive power operations from the perspective of redshift. Let $R$ be a $T(n)$-local commutative ring spectrum. For a finite group $G$ acting on $[n]$, the functor
\[(-)^{\otimes n}_{hG}\colon \Mod_R\too\Mod_R\]
extends to an operation on the underlying space of $K(R)$ using the polynomial functoriality of algebraic $K$-theory (\cite{polynomial}).

We show that this operation, arising from the categorification $\Mod_R$, agrees with an operation arising from the higher semiadditive structure of $\Sp_{T(n+1)}$.
\begin{thmx}[{\Cref{ktheory_comparison}, \Cref{lnf_comparison}}]
\label{lnf_comparison_intro}
    Suppose $R$ is a $T(n)$-local commutative ring spectrum. For a finite $p$-group $G$ acting on $[n]$, there is a commutative square of spaces 
\[\begin{tikzcd}[ampersand replacement=\&,column sep=large]
	{\Omega^\infty K(R)} \& {\Omega^\infty K(R)} \\
	{\Omega^\infty K_{T(n+1)}(R)} \& {\Omega^\infty K_{T(n+1)}(R).}
	\arrow["{(-)_{hG}^{\otimes n}}", from=1-1, to=1-2]
	\arrow[from=1-1, to=2-1]
	\arrow[from=1-2, to=2-2]
	\arrow["{\int_{BG}(-)^n}", from=2-1, to=2-2]
\end{tikzcd}\]
\end{thmx}
The main difficulty in proving \Cref{lnf_comparison_intro} lies in the fact that the operations being compared are defined quite differently. We accomplish this by constructing the lax symmetric monoidal structure on $K$-theory using polynomial functoriality, a construction that may be of independent interest (\Cref{lax_sm}).
    \begin{rem}Suppose $\cC\in\CAlg(\Cat_\perf)$ is $L_n^f$-local and has $1$-finite colimits. For $G$ acting on $[n]$ and $V\in\cC^{BG}$, there is an operation on $K(\cC)$ induced by $(V\otimes(-)^{\otimes n})_{hG}$. More generally, we show that this operation is a lift of the power operation 
   \[x\longmapsto \int_{BG}[V]\cdot x^n\]
   defined on $K_{T(n+1)}(\cC)$,
   where $[V]$ is the class of $V$ in $K_{T(n+1)}(\cC^{BG})\simeq K_{T(n+1)}(\cC)^{BG}$.
\end{rem}
These results give a higher semiadditive construction of $T(n+1)$-local power operations defined using polynomial functoriality.
\subsubsection{Redshift and $\lambda$-Rings - The Case of Height $1$}
We now consider the relationship between operations on $\Mod_R$ and $T(n+1)$-local power operations when $n=0$. Suppose $R$ is a rational commutative ring spectrum. Classically, operations on $\Mod_R$ are given by the theory of \textit{Schur functors}. The action of Schur functors on $\Mod_R$ extends to $K$-theory, giving $K_0(R)$ the structure of a \textit{$\lambda$-ring}\footnote{For an overview of $\lambda$-rings see \cite{knutson,haze,donald}.} (\Cref{rational_lambda}). In fact, the definition of a $\lambda$-ring axiomatizes this action of Schur functors (\cite{schur}). 

By work of Joyal and Rezk, a $\lambda$-ring is precisely a commutative ring equipped with compatible $\delta$-ring structures $\delta_p$ for every prime $p$ (\cite{joyal_lambda,rezk_lambda}, and see \cite{hubner}). This means that a $\delta$-ring (at a fixed prime $p$) should be considered the `$p$-typical' version of a $\lambda$-ring. Since power operations in height $1$ are $\delta$-ring operations, this is an instance of the redshift philosophy: power operations in height $1$ are exactly the $p$-typical version of power operations on the $K$-theory of rational commutative ring spectra. 
\pagebreak

We conclude by showing that the localization map on algebraic $K$-theory indeed intertwines these $p$-typical $\lambda$-ring structures:
\begin{corx}[\Cref{redshift_delta}]
\label{TheoremC}
    Let $\cC\in\CAlg(\Cat_\perf)$ be $\QQ$-linear. 
     Then there is a commutative square of spaces
\[\begin{tikzcd}[ampersand replacement=\&]
	{\Omega^\infty K(\cC)} \& {\Omega^\infty K(\cC)} \\
	{\Omega^\infty K_{T(1)}(\cC)} \& {\Omega^\infty K_{T(1)}(\cC),}
	\arrow["{\delta_p}", from=1-1, to=1-2]
	\arrow[from=1-1, to=2-1]
	\arrow[from=1-2, to=2-2]
	\arrow["\theta", from=2-1, to=2-2]
\end{tikzcd}\]
where $\delta_p$ is the $p$-derivation associated to the $\lambda$-ring structure on $K(\cC)$ (\Cref{rational_lambda}) and $\theta$ is the operation of \Cref{theta_construction}.
\end{corx}

\subsection{Conventions}
\begin{itemize}

\item A \textit{category} is an $(\infty,1)$-category and a \textit{$1$-category} is a $(1,1)$-category.
\item $\Pr$ is the category of presentable categories and left adjoints, often denoted by $\Pr^L$.
\item For $0\le m \le\infty$, we let $\Cat^{m\text{-fin}}:=\Cat^{\mathcal K_m}$, where $\mathcal K_m$ is the collection of $m$-finite spaces (i.e. $m$-truncated spaces with finitely many connected components, each of which with finite homotopy groups). 
\item For $0\le m\le \infty$, $\Cat^{m\text{-}\oplus}\subset \Cat^{m\text{-fin}}$ is the full subcategory on $m$-semiadditive categories.
\item $\Cat_\perf$ is the category of idempotent-complete (small) stable categories.
    \item It is standard to use the symbol `$\delta$', and not `$\theta$', to denote a $p$-derivation. We use `$\delta$' when discussing a {general} $p$-derivation and denote the operation of \Cref{theta_construction} by `$\theta$.'
\end{itemize}
\subsection{Acknowledgements}
I thank my advisor Lior Yanovski for suggesting many ideas reflected in this work, particularly the statement of \Cref{vo_height0}, and for helpful comments on earlier drafts of this paper. I also thank Tomer Schlank, Dorin Boger, and the entire Jerusalem Seminarak group for useful discussions about semiadditive power operations in general. Finally, I thank Omer Bojan for many helpful and interesting conversations.

AI tools were used to slightly simplify the proof of \Cref{injective}. All other mathematical content (and all writing) is my own.
\section{Higher Semiadditivity}
\label{higher_semiadditivity}
In this section we briefly recall the theory of higher semiadditivity (see \cite{HL,teleambi}). We then introduce the higher cyclotomic extensions and Fourier theory of \cite{cyclo,fourier}. We discuss virtual orientability and introduce the universal virtually $\FF_p$-orientable stable presentable category of height $n$. We conclude the section by defining semiadditive power operations.
\subsection{Higher Commutative Monoids}
\label{higher_comm_monoids}

A {commutative monoid} in a category $\cC$ is an object $M\in \cC$ equipped with a way to sum finitely many `points' in $M$. The data of a commutative monoid can be succinctly packaged as a functor
\[\Span(\Fin)\too\cC\]
that satisfies the \textit{Segal condition}, where $\Span(\Fin)$ is the $(2,1)$-category of spans of finite sets.     Here, the Segal condition is equivalent to the condition that the functor $\Span(\Fin)\to\cC$ preserves finite products.

More generally, a \textit{higher} commutative monoid in $\cC$ is an object $M\in\cC$ equipped with a way to sum families of points indexed by \textit{spaces}.
\begin{defn}[{\cite{spans}}]
   An \textit{\mdef{$m$-commutative monoid}} in a category $\cC$ is a functor
    \[M\colon\Span(\Spc_{m\textup{-fin}})\too\cC\]
    that preserves $m$-finite limits. The category of $m$-commutative monoids in $\cC$ is given by
    \[\mdef{\CMon_m(\cC):=\Fun^{m\text{-fin}}(\Span(\Spc_{m\textup{-fin}}),\cC)},\]
    where $\Fun^{m\textup{-fin}}(-,-)$ denotes the full subcategory on functors preserving $m$-finite limits.
    \end{defn}

Recall that a semiadditive category $\cC$ is a pointed category in which products and coproducts agree. In a semiadditive category, every object is canonically a commutative monoid. Roughly, a category is \textit{$m$-semiadditive} when limits and colimits over $m$-finite \textit{spaces} agree (\cite{HL}). If this holds, then every object of $\cC$ is canonically a higher commutative monoid:
\begin{theorem}[{\cite{spans}}]
    Suppose $\cC$ is an $m$-semiadditive category. Then the functor
    \[\CMon_m(\cC)\iso  \cC\]
    given by evaluation at $\pt\in \Span(\Spc_{m\textup{-fin}})$
    is an equivalence.
\end{theorem}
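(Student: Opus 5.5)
We prove the theorem from \cite{spans} by induction on $m$. The base case $m=-1$ (or $m=0$, with ordinary semiadditivity) is classical: in a semiadditive category every object is uniquely a commutative monoid, and $\CMon_0(\cC)\to\cC$ is an equivalence. For the inductive step we identify $m$-commutative monoids with $m$-commutative monoids in $(m-1)$-commutative monoids. Concretely, $\Span(\Spc_{m\text{-fin}})$ is built from $\Span(\Spc_{(m-1)\text{-fin}})$ by adjoining the spans involving $m$-finite spaces. Hence an object of $\CMon_m(\cC)$ is an object of $\CMon_{m-1}(\cC)$ together with additional ``integration'' maps
\[\int_A\colon M^{A}\too M\]
for $m$-finite $A$, compatible with composition of spans.

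The key step uses the universal property of spans. The category $\Span(\Spc_{m\text{-fin}})$ is the free $m$-semiadditive category on a point: it is obtained from $\Spc_{m\text{-fin}}^{\op}$ by freely adjoining left adjoints to the restriction maps $f^*$, subject to the Beck--Chevalley condition. We therefore argue as follows.
\begin{enumerate}
\item Since $\cC$ is $m$-semiadditive, $\CMon_m(\cC)$ is equivalent to $\Fun^{m\text{-fin}}(\Spc_{m\text{-fin}}^{\op},\cC)$ together with a Beck--Chevalley ambidextrous extension.
\item A limit-preserving functor $\Spc_{m\text{-fin}}^{\op}\to\cC$ is determined by its value at $\pt$, namely $A\mapsto M^A$.
\item The extension to spans exists and is unique. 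The required wrong-way maps $f_!\colon M^{A}\to M^{B}$ are forced to be the composites of the inverse norm with the colimit maps, so the space of such extensions is contractible.
\end{enumerate}

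The main obstacle is making the contractibility in step 3 rigorous in the $\infty$-categorical setting, with all higher coherences. Checking the equivalence on objects is easy; the coherences are the hard part. We handle this by invoking the Barwick/Haugseng description of $\Span$ as the universal category with Beck--Chevalley left adjoints. With it, functors $\Span(\Spc_{m\text{-fin}})\to\cC$ that preserve $m$-finite limits correspond to $m$-finite-limit-preserving functors $\Spc_{m\text{-fin}}^{\op}\to\cC$ whose restrictions admit Beck--Chevalley left adjoints. In an $m$-semiadditive $\cC$, these adjoints exist automatically: they are given by the colimits $\varinjlim_A$, transported along the norm equivalence. Being an adjoint is a property rather than extra structure, which gives full faithfulness. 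Essential surjectivity then follows from the right Kan extension along $\pt\to\Spc_{m\text{-fin}}^{\op}$.
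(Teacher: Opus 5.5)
The paper does not prove this statement; it quotes it from \cite{spans}. Taken on its own terms, your proposal has a genuine gap, and it sits exactly at the step you flag as the main obstacle.

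The universal property you invoke, that $\Span(\cD)$ is obtained by freely adjoining Beck--Chevalley left adjoints to the restrictions, is a statement about functors from the $(\infty,2)$-category of spans into an $(\infty,2)$-category. Your target $\cC$ is an $(\infty,1)$-category, in which every 2-cell is invertible. So a map $f^*\colon M^B\to M^A$ in $\cC$ has a left adjoint only if it is an equivalence. Read literally, your correspondence would force the diagonal $M\to M\times M$ (restriction along $\pt\sqcup\pt\to\pt$) to be invertible, and in a pointed category that means $M\simeq 0$. In the $(\infty,1)$-setting, the transfers $\int_f\colon M^A\to M^B$ are genuine additional data on a functor $\Spc_{m\textup{-fin}}^{\mathrm{op}}\to\cC$; this is why $\CMon_m(\cC)\to\cC$ fails to be an equivalence in general. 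The theorem says precisely that $m$-semiadditivity makes this data exist uniquely. Consequently, ``being an adjoint is a property'' does not give full faithfulness. Likewise, right Kan extension along $\pt\to\Spc_{m\textup{-fin}}^{\mathrm{op}}$ produces only the contravariant half of the functor, not the coherent transfers that essential surjectivity requires.

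Your fallback, that $\Span(\Spc_{m\textup{-fin}})$ is the free $m$-semiadditive category on a point, is essentially the statement being proved. For $m$-semiadditive targets, $m$-finite-limit-preserving functors are exactly the $m$-semiadditive ones, so invoking it is circular.

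Step~3 also only pins down the individual maps $f_!$. But the norm of an $m$-finite space is itself built from the $(m-1)$-semiadditivity of its diagonal, and the compatibility of norms with composition and base change is a theorem of Hopkins--Lurie. Nothing in your argument controls the higher coherences. The induction on $m$ you announce is where this inductive structure would have to be used, yet it never actually enters.

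A repair along your lines has to place the adjoints where they genuinely exist:
\begin{enumerate}
\item Apply the $(\infty,2)$-categorical universal property to $A\mapsto\Fun(A,\cC)$ in $\Cat$, where $f^*$ has left adjoints $f_!$ satisfying base change.
\item Use $m$-semiadditivity to identify $f_!\simeq f_*$.
\item Give a separate, parametrized Yoneda-type argument that extracts $\cC$-valued $m$-commutative monoids from the resulting functor $\Span(\Spc_{m\textup{-fin}})\to\Cat$.
\end{enumerate}
Alternatively, cite \cite{spans}, as the paper does.
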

This is very useful, as it means that a \textit{property} of a category (being higher semiadditive) implies the existence of \textit{structure} on its objects. Therefore, we think about $m$-semiadditivity of $\cC$ as providing a certain {interface}: every object of $\cC$ is canonically a higher commutative monoid. 

We can unwrap the structure of a higher commutative monoid a bit. Given an $m$-commutative monoid $M$, its underlying object is $M(\pt)$. For a map $q\colon A\to B$ of $m$-finite spaces, the span $B\leftarrow A\rightarrow A$ gives rise to restriction maps
\[q^*\colon M^B\too M^A,\]
and the span $A\leftarrow A\to B$ gives rise to transfers
\[q_*\colon M^A\too M^B.\]
Informally, the pushforward $q_*$ encodes summation, or \textit{integration}, along the fibers of $q$. We denote it by the `semiadditive integral' 
\[\mdef{\int_q(-):=q_*(-)}.\]
This notation is meant to suggest various identities and compatibilities. Indeed, the semiadditive integral satisfies homogeneity, Fubini's theorem, etc. (\cite[Section 2]{teleambi}).

As an example of the formal calculus of the semiadditive integral, we show that $q_*$ and $q^*$ are dual with respect to the `integration pairing':
\begin{proposition}
    \label{dirac}
    Let $\cC$ be an $m$-semiadditive symmetric monoidal category,  
    and let $q\colon A\to B$ be a map of $m$-finite spaces. For every $R\in\CAlg(\cC)$ and pair of functions $f\in \pi_0R^A$ and $g\in \pi_0R^B$,
    \[\int_Bq_*(f)\cdot g=\int_Af\cdot q^*(g).\]
    
\end{proposition}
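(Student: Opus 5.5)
The plan is to derive the identity from the projection formula for the semiadditive integral, together with Fubini (functoriality of integration under composition). We write $\int_A := (A\to\pt)_*$ and similarly for $B$. Factor $A\to\pt$ as $A\xrightarrow{q}B\to\pt$. By the Fubini property (functoriality of pushforwards: $(pq)_* = p_*q_*$), we have
\[\int_A f\cdot q^*(g)=\int_B q_*\bigl(f\cdot q^*(g)\bigr).\]
It therefore suffices to prove the projection formula
\[q_*\bigl(f\cdot q^*(g)\bigr)=q_*(f)\cdot g \quad\text{in } \pi_0R^B.\]

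To prove the projection formula, I would use that the multiplication on $R^A$ is $\Delta_A^*$ composed with the external product $R^A\otimes R^A\to R^{A\times A}$. The maps $q_*$ are $R^B$-linear, where $R^A$ is regarded as an $R^B$-module via $q^*$. This linearity follows from the functoriality of the higher commutative monoid structure. Concretely, the span picture gives the following description. The composite $R^A\otimes R^B\xrightarrow{q_*\otimes \mathrm{id}} R^B\otimes R^B\to R^{B\times B}\xrightarrow{\Delta_B^*}R^B$ equals the composite through $R^{A\times B}$. This equality uses that external products are compatible with pushforward in each variable, so that $q_*\otimes\mathrm{id}$ corresponds to $(q\times \mathrm{id})_*$. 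It also uses the Beck--Chevalley condition for the pullback square
\[\begin{tikzcd}
A \arrow[r,"{(\mathrm{id},q)}"]\arrow[d,"q"'] & A\times B\arrow[d,"q\times\mathrm{id}"]\\
B\arrow[r,"\Delta_B"'] & B\times B,
\end{tikzcd}\]
which gives $\Delta_B^*(q\times\mathrm{id})_*=q_*(\mathrm{id},q)^*$. Finally, $(\mathrm{id},q)^*$ applied to $f\boxtimes g$ is $f\cdot q^*(g)$. These compatibilities are exactly the homogeneity and base-change statements recorded in \cite[Section 2]{teleambi}, and I would cite them.

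The main obstacle I expect is justifying that the external product intertwines $q_*\otimes\mathrm{id}$ with $(q\times\mathrm{id})_*$. In other words, the tensor product must be compatible with the semiadditive norms, so that $-\otimes R^B$ preserves $m$-finite limits indexed by the fibers of $q$. I would derive this from the fact that $\otimes$ distributes over $m$-finite colimits in an $m$-semiadditive symmetric monoidal category. Since these colimits agree with the corresponding limits, the norm maps are respected, which I would cite as the homogeneity of the integral from \cite[Section 2]{teleambi}. Once this step is in place, the remaining steps are formal manipulations of spans, and the identity follows on $\pi_0$.
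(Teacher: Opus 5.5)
You reduce via Fubini to the projection formula $q_*(f\cdot q^*(g))=q_*(f)\cdot g$ exactly as the paper does. The two arguments differ only in how that formula is proved.

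The paper gets it in one step from homogeneity of the integral \cite[Proposition 2.1.14]{teleambi}. It applies homogeneity to the $R$-linear map $R^B\to R^B$ given pointwise by multiplication by $g(b)$. Multiplication by $q^*(g)$ on $R^A$ is the pullback of this map along $q$, and homogeneity says $\int_q$ commutes with such maps. You instead write the product as an external product followed by $\Delta_B^*$, move $q_*$ past the external product, and apply base change to the pullback square with $\Delta_B$. You finish with the fact that $(\mathrm{id},q)^*$ carries the external product of $f$ and $g$ to $f\cdot q^*(g)$. These steps are all correct. Your route has the appeal of deriving the projection formula from base change, that is, purely from composition of spans.

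The cost is the step you flag yourself. Justified as you propose, it needs $\otimes$ to distribute over $m$-finite colimits. That holds under the usual convention $\cC\in\CAlg(\Cat^{m\text{-}\oplus})$, which is the setting where the paper later applies the proposition. It does not follow from $m$-semiadditivity plus a symmetric monoidal structure, as your phrasing suggests: a cocartesian monoidal structure on a semiadditive category need not be distributive.

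On elements, the step needs no such hypothesis. The identity $(q\times\mathrm{id})_*$ of the external product of $f$ and $g$ equals the external product of $q_*(f)$ and $g$. This follows from naturality of transfers along the map $R\simeq R\otimes\one\xrightarrow{\mathrm{id}\otimes g}R\otimes R^B\to R^B$, because every map in an $m$-semiadditive category respects the canonical $m$-commutative monoid structures. That naturality is essentially the homogeneity the paper invokes, and once you have it the external product and base change become unnecessary.
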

\begin{proof}
Since
\[\int_Af\cdot q^*(g)=\int_B\int_qf\cdot q^*(g)\]
by Higher Fubini's Theorem (\cite[Proposition 2.1.15]{teleambi}), we are reduced to showing
\[q_*(f)\cdot g=\int_qf\cdot q^*(g).\]
    This equality follows from homogeneity of the semiadditive integral (\cite[Proposition 2.1.14]{teleambi}) applied to the $R$-linear homomorphism $R^B\to R^B$ given pointwise by $g(b)$.
\end{proof}
\subsubsection{Semiadditive Height}
We recall the following notions from \cite{ambiheight}.
\begin{defn}
    Let $M$ be an $m$-commutative monoid. For every $m$-finite space $A$, the \mdef{\textit{cardinality}} of $A$ is the natural endomorphism
    \[\mdef{|A|\colon M(\pt)\too M(\pt)}\]
    defined by the span $\pt\leftarrow A\to \pt$.
\end{defn}
When $A$ is a finite set, $|A|$ is indeed multiplication by the cardinality of $A$. Cardinalities are used to define invariants of objects in a higher semiadditive category: 
\begin{defn}
    For a category $\cC$ and natural endomorphism $\alpha\colon\id_\cC\to\id_\cC$, we say that
    \begin{enumerate}
        \item $X\in\cC$ is \mdef{\emph{$\alpha$-divisible}} if $\alpha_X$ is an isomorphism.
        \item $X\in\cC$ is \mdef{\emph{$\alpha$-complete}} if $\Map(Z,X)=0$ for all $\alpha$-divisible $Z\in\cC$.
    \end{enumerate}
\end{defn}
\begin{defn}
    Let $\cC$ be an $m$-semiadditive category. Then for every $n\le m$, we say that an object $X\in\cC$ has \mdef{\textit{semiadditive height $n$}} (at an implicit prime $p$) when $X$ is $|B^{n-1}C_p|$-complete and $|B^nC_p|$-divisible.
    
    An $m$-semiadditive category $\cC$ has height $n\le m$ when all of its objects have height $n$.
\end{defn}
\begin{example}[{\cite[Theorem 4.4.5]{ambiheight}}] The categories $\Sp_{K(n)}$ and $\Sp_{T(n)}$ have semiadditive height $n$.

\end{example}
\subsection{Higher Cyclotomic Extensions and Virtual Orientability}
\label{cyclo_virt}

Semiadditive height, as a generalization of chromatic height, is an invariant which detects the `characteristic' of objects in a higher semiadditive category. We now introduce an additional condition defined using semiadditive structure.

If $R$ is a discrete commutative ring, then one can formulate the following condition:
\begin{quote}
    $(\ast)$ the $p$th cyclotomic extension $R[\zeta_p]$ of $R$ is a Galois extension of $R$.
\end{quote}
Condition $(\ast)$ is \textit{equivalent} to the condition that $p$ acts invertibly on $R$, or equivalently, that $R$ has semiadditive height $0$.

In \cite{cyclo}, the authors define \textit{higher} cyclotomic extensions $R[\omega_p^{(n)}]$ of height $n$. When $n>0$, the extension $R[\omega_p^{(n)}]$ may fail to be Galois, even when $R$ has semiadditive height $n$ (\cite[Proposition~3.9]{yuan}). In this section we recall how the condition that $R[\omega_p^{(n)}]$ is Galois can be re-expressed using the higher Fourier theory of \cite{fourier}, which then naturally leads to the notion of \emph{virtual orientability}.

\subsubsection{Cyclotomic Extensions}
\label{cyclo_virtual}
\begin{defn}[{\cite{cyclo}}]
Let $\cC$ be a symmetric monoidal category. A \mdef{\textit{(height $n$) $p$th root of unity}} of $R\in\CAlg(\cC)$ is a map of connective spectra \[\omega\colon \Sigma^n\FF_p\to R^\times.\] 

We say $\omega$ is \mdef{\textit{primitive}} if the only commutative $R$-algebra $S\in \CAlg(\cC)$ for which the diagram
\[\begin{tikzcd}[ampersand replacement=\&]
	{\Sigma^n\FF_p} \& {R^\times} \\
	0 \& {S^\times}
	\arrow[from=1-1, to=1-2]
	\arrow[from=1-1, to=2-1]
	\arrow[from=1-2, to=2-2]
	\arrow[from=2-1, to=2-2]
\end{tikzcd}\]
commutes is $S=0$.
\end{defn}
When $\cC$ is presentable, stable, and $R$ has semiadditive height $n$, the \textit{higher cyclotomic extension} \mdef{$R[\omega_p^{(n)}]$} is an $\FF_p^\times$-extension of $R$ that is equipped with a universal primitive (height $n$) $p$th root of unity (\cite{cyclo}).

Since the construction of cyclotomic extensions requires $\cC$ to be stable, there is no way to make sense of them for commutative algebras like $\Mod_R\in\CAlg(\Pr)$, which are not objects in a stable category. For this reason, we instead work with another, more flexible notion coming from the higher Fourier theory of \cite{fourier}.

 \subsubsection{Higher Fourier Theory}
We briefly recall the relevant material from \cite{fourier}. 

The space of (height $n$) $p$th roots of unity $
\omega$ is naturally isomorphic to the space of \textit{semiadditive Fourier transforms} $\mathfrak F_\omega$, which are transformations 
\[\mdef{\mathfrak F_\omega\colon R[M]\too R^{\lfloor I^{(n)}M\rfloor}}\qin\CAlg(\cC)\]
natural in $\pi$-finite, $n$-truncated $\FF_p$-modules $M\in\Mod_{\FF_p}^{[0,n]\text{-fin}}(\Sp_{\textup{cn}})$. Here, the spectrum $I^{(n)}$ is a shifted spectral version of Pontryagin duality.

Even when $\omega$ is a primitive root of unity, the Fourier transform $\mathfrak F_\omega$ need not be an isomorphism. In height $1$, this is related to the condition that the cyclotomic extension $R[\omega_p^{(1)}]$ is Galois:
\begin{proposition}[{\cite[Proposition 6.13]{fourier}}]
\label{cyclo_galois}
    Let $\cC\in\CAlg(\Pr^{1\textup{-}\oplus}_\st)$ and $R\in\CAlg(\cC)$, and let $\omega\colon\Sigma\FF_p\to R[\omega_p^{(1)}]^\times$ be the tautological primitive (height $1$) $p$th root of unity. Then the extension $R[\omega_p^{(1)}]$ is a $\FF_p^\times$-Galois extension of $R$ if and only if the Fourier transform $\mathfrak F_\omega$ is an isomorphism.
\end{proposition}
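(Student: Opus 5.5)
The statement is \cite[Proposition 6.13]{fourier}: $R[\omega_p^{(1)}]$ is $\FF_p^\times$-Galois over $R$ if and only if $\mathfrak F_\omega$ is an isomorphism. I would prove it by reducing both conditions to the invertibility of one explicit map of commutative $R[\omega_p^{(1)}]$-algebras, namely the Fourier transform at the generating module $M=\FF_p$. Write $S=R[\omega_p^{(1)}]$; since $\omega$ is a root of unity of $S$, the transform is naturally defined over $S$. The transform is natural in $M$ and sends sums of modules to tensor products, since both sides are symmetric monoidal in $M$. So it should be enough to test it on $M=\FF_p$ and $M=\Sigma\FF_p$, because every $\pi$-finite $1$-truncated $\FF_p$-module is built from these by finite sums and extensions. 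Using Fourier duality, which swaps these two cases, together with $1$-semiadditivity, I would further reduce to the single map
\[\mathfrak F_\omega\colon S[\Sigma\FF_p]=S\otimes\Sigma^\infty_+B C_p\too S^{C_p}=\prod_{\FF_p}S .\]
Here $I^{(1)}$ turns $\Sigma\FF_p$ into the discrete $\FF_p$. Its components are the characters $\omega^k\colon S[BC_p]\to S$.

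Next I would compare this map with the Galois condition. The extension $S$ is defined as the summand of $R[BC_p]$ cut out by the idempotent complementary to the trivial character. This idempotent exists because $|BC_p|$ is invertible at height $1$, and $\FF_p^\times$ acts on $S$ through $\operatorname{Aut}(C_p)$. I would show that the Galois map $S\otimes_R S\to\prod_{\FF_p^\times}S$ is the restriction of $\mathfrak F_\omega$ to the nontrivial-character summand, after base change. The trivial summand is $S\otimes R\to S$ via augmentation, which is always an equivalence. Hence $\mathfrak F_\omega$ is an equivalence at $\Sigma\FF_p$ exactly when the Galois map is.

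To finish the Galois condition, I also need $R\to S^{h\FF_p^\times}$ to be an equivalence. This is automatic, because $\FF_p^\times$ has order prime to $p$. Concretely, $|\FF_p^\times|$ is invertible in the $p$-local height $1$ setting, so homotopy fixed points are given by the idempotent averaging. They recover $R$ via the character decomposition of $R[BC_p]$.

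The main obstacle is the reduction from all $M$ to $M=\Sigma\FF_p$. It requires knowing that the transform at $\FF_p$, namely $S[\FF_p]\to S^{B\widehat{\FF_p}}$, is dual to the one at $\Sigma\FF_p$ under the integration pairing of \Cref{dirac}. It also requires that equivalences are stable under extensions of modules. For the first point I would attempt to show that $\int_{BC_p}$ identifies $S^{BC_p}$ with the $S$-linear dual of $S[BC_p]$, using semiadditivity. For extensions I would use that both sides convert fiber sequences of modules into relative tensor products (bar constructions), which are preserved by equivalences.
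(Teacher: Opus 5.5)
The paper gives no proof of this proposition; it only cites \cite{fourier}. Your handling of the Fourier side is sound. The reduction to $M=\Sigma\FF_p$ is easier than you fear: every $\pi$-finite $1$-truncated $\FF_p$-module splits as $\FF_p^{\oplus a}\oplus\Sigma\FF_p^{\oplus b}$, so no extension argument is needed, and duality handles $\FF_p$. Your identification of $\mathfrak F_\omega$ at $\Sigma\FF_p$ with $\id_S\times\big(S\otimes_R S\to\prod_{\FF_p^\times}S\big)$, via $R[BC_p]\simeq R\times S$, is also correct, because the idempotent defining $S$ is $\mathrm{Aut}(C_p)$-invariant.

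The genuine gap is your claim that $R\to S^{h\FF_p^\times}$ is automatically an equivalence. Invertibility of $|\FF_p^\times|$ only makes $(-)^{h\FF_p^\times}$ an exact retract that commutes with base change; it does not compute the invariant part of $S$. At height $0$ the analogous claim holds because $C_p\setminus\{0\}$ is a free $\FF_p^\times$-orbit, so $S$ is an induced module. $BC_p$ has no such orbit decomposition, and supplying one is exactly what the Fourier transform does. Concretely,
\[R\times S^{h\FF_p^\times}\simeq R[BC_p]_{h\FF_p^\times}\simeq R[B(C_p\rtimes\FF_p^\times)]\simeq R[B\Sigma_p].\]
So your claim is equivalent to the splitting $R[B\Sigma_p]\simeq R\oplus R$. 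That is dual to the splitting the paper deduces from virtual orientability in \Cref{virtual} and then uses as the key input for \Cref{theoremAproof}, so it cannot be waved through. The ``character decomposition of $R[BC_p]$'' only gives $R[BC_p]\simeq R\times S$ and says nothing about $S^{h\FF_p^\times}$.

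The step can be repaired, but it must use the Galois map and $p$-completeness. Assume $S\otimes_R S\simeq\prod_{\FF_p^\times}S$ equivariantly, with $\FF_p^\times$ permuting the factors. Since $S\otimes_R(-)$ commutes with $(-)^{h\FF_p^\times}$, you get $S\otimes_R S^{h\FF_p^\times}\simeq\big(\prod_{\FF_p^\times}S\big)^{h\FF_p^\times}\simeq S$. Hence the cofiber $T$ of $R\to S^{h\FF_p^\times}$ satisfies $S\otimes_R T\simeq 0$, and so $T[BC_p]\simeq T\oplus(S\otimes_R T)\simeq T$; that is, the fold map $\nabla\colon T[BC_p]\to T$ is invertible. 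Let $e\colon\pt\to BC_p$. Then $e_!=\nabla^{-1}$, and $|BC_p|=\nabla\,\Nm^{-1}\Delta$ gives $\Delta=\Nm\,e_!\,|BC_p|$. Using $e^*\Delta=\id$ and $e^*\Nm\,e_!=|C_p|=p$, you get $\id_T=p\,|BC_p|$ on $T$. But $p|BC_p|-1$ is a unit in $\pi_0R$ by $p$-completeness, as noted in \Cref{theta}, so $T\simeq 0$. With this argument inserted, your ``if'' direction is complete, and your ``only if'' direction stands as written.
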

\begin{rem}
\label{general_height_galois}
    In height $n$, the extension $R[\omega_p^{(n)}]$ is Galois if and only if the Fourier transform associated to the tautological (height $n$) $p$th root of unity of $R[\omega_p^{(n)}]$ is an isomorphism when evaluated on \textit{discrete} $\FF_p$-modules $M\in\Mod_{\FF_p}^{[0,n]\text{-fin}}(\Sp_\textup{cn})$.
\end{rem}

It is natural to formulate the condition that the Fourier transform $\mathfrak F_\omega$ is an isomorphism on \textit{all} truncated $\FF_p$-modules, not just discrete ones.
\begin{defn}
    A (height $n$) $p$th root of unity $\omega$ is an \mdef{\textit{orientation}}\footnote{We use the term `orientation' (and similarly `orientable') to refer to $\FF_p$-orientations, since we will not consider orientability with respect to ring spectra other than $\FF_p$.} if the corresponding Fourier transform $\mathfrak F_\omega$ is an isomorphism when evaluated on {all} $\FF_p$-modules $M\in\Mod_{\FF_p}^{[0,n]\text{-fin}}(\Sp_\textup{cn})$. If $R\in\CAlg(\cC)$ admits an orientation, then $R$ is called \textit{\mdef{orientable} (of height $n$).}
\end{defn}
\begin{example}[{\cite[Example 4.2]{fourier}}]
\label{height0_roots}
    A $p$th root of unity of height $0$ is an orientation if and only if it is primitive.
\end{example}
Often, $R$ admits an orientation only after faithful extension of scalars. In this case, many pleasant consequences of orientability still hold by \textit{faithful descent} (\cite[Section~4.3]{fourier}).
\begin{defn}
    We say $R\in\CAlg(\cC)$ is \textit{\mdef{virtually orientable} (of height $n$)} if there exists a faithful commutative $R$-algebra $S\in\CAlg(\cC)$ that is orientable (of height $n$).
    The category $\cC$ is virtually orientable if $\one_\cC$ is.\end{defn}

   \subsubsection{Examples}
  We thank Lior Yanovski for the following example.
   \begin{proposition}
   \label{vo_height0}
       Suppose $\cC\in\CAlg(\Pr^{0\textup{-}\oplus})$ has semiadditive height $0$. Then $\cC$ is virtually orientable if and only if it is additive.
   \end{proposition}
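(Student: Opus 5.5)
We prove the two implications separately. In both directions we use \Cref{height0_roots}: a height $0$ root of unity $\omega\colon \FF_p\to R^\times$ is an orientation if and only if it is primitive. So virtual orientability of $\cC$ means there is a faithful commutative algebra $S\in\CAlg(\cC)$ with a primitive $p$th root of unity. Concretely, $\omega$ is a map of commutative monoids $C_p\to \Omega^\infty$-style units of $S$, and in height $0$ its Fourier transform is the map $S[C_p]\to S^{C_p}$ sending $g\mapsto(\chi\mapsto\omega(\chi(g)))$.

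\emph{Virtually orientable implies additive.} Let $S$ be faithful and oriented, so $\mathfrak F_\omega\colon S[C_p]\to S^{C_p}$ is an isomorphism. Since $\cC$ is $0$-semiadditive, the endomorphism monoid $\pi_0\End(\one)$ is a commutative monoid; the task is to exhibit an additive inverse of $1$. Height $0$ makes $|C_p|=p$ invertible on every object. Define, in $\pi_0 S$, the element
\[
e=\frac1p\sum_{k=0}^{p-1}\omega^k ,
\]
which is the Fourier image of the delta at the trivial character. Using that $\mathfrak F_\omega$ is an isomorphism of algebras and that $\omega$ is primitive, we can write $\sum_{k=0}^{p-1}\omega^k=0$ in $\pi_0S$. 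Indeed, $\mathfrak F_\omega$ carries the element $\sum_g g$ (the integral over $C_p$) to $p\cdot\delta_0$, and inverting $\mathfrak F_\omega$ yields this relation. Hence
\[
1+(\omega+\dots+\omega^{p-1})=0,
\]
so $1$ has an additive inverse in $\pi_0 S$. Consequently $S$, and therefore $\Mod_S(\cC)$, is additive: the shear map on any $S$-module is invertible. Finally, faithfulness of $S$ descends additivity to $\cC$. The functor $S\otimes-$ is conservative, and the shear map $X\oplus X\to X\oplus X$ becomes an isomorphism after tensoring with $S$, so it was already an isomorphism. Thus $\cC$ is additive.

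\emph{Additive implies virtually orientable.} Now $\cC$ is additive and $p$ is invertible, so $\one$ is a $\ZZ[1/p]$-algebra in the appropriate sense. Take $S=\one[\zeta_p]$, defined as $\one\otimes\ZZ[1/p][\zeta_p]$, or as the idempotent summand of $\one[C_p]$ cut out by the idempotent $1-\frac1p\sum_g g$. Equivalently, $S=\one[\omega_p^{(0)}]$, the cyclotomic extension. The algebra $\one[C_p]$ is faithful. It splits as $\one\times\one[\zeta_p]$, and $\one[\zeta_p]$ is faithful because after base change to itself $\one[C_p]\simeq\prod_{C_p}$ of copies. More directly, $\ZZ[1/p][\zeta_p]$ is a faithfully flat Galois extension of $\ZZ[1/p]$, which can be seen by tensoring with the finite étale cover. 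Here we need additivity to make sense of $\ZZ$-linear algebra in $\cC$. Indeed, an additive presentable category is tensored over $\Sp_{\ge0}$-grouplike objects and receives a map from $\Mod_\ZZ$-ish structures on $\pi_0$ of mapping spaces. The tautological $\zeta_p$ is primitive, and by \Cref{height0_roots} it is an orientation.

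The main obstacle is the first direction, specifically producing the relation $\sum\omega^k=0$ and carrying out the descent of additivity in this unstable presentable setting. The second direction mostly requires care in defining $\one[\zeta_p]$ and checking its faithfulness, which we do by showing that the base change $S\otimes\one[C_p]$ splits as a product.
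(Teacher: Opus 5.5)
Your strategy is the paper's in both directions. After a faithful base change to an oriented $S$, you use $-1=\omega+\dots+\omega^{p-1}$ to invert the shear map and descend by conservativity. Conversely, you cut out $\one[\zeta_p]$ from $\one[C_p]$ by the idempotent $1-\frac1p\sum_g g$ and invoke \Cref{height0_roots}.

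\textbf{The gap in the forward direction.} Your derivation of $\sum_k\omega^k=0$, the step you yourself flag as the crux, is circular. The value of $\mathfrak F_\omega(\sum_g g)$ at a nontrivial character $\chi$ is $\sum_g\omega(\chi(g))=\sum_{k\in\FF_p}\omega^k$. So asserting $\mathfrak F_\omega(\sum_g g)=p\,\delta_0$ is asserting the relation itself. Invertibility of $\mathfrak F_\omega$ does give it honestly:
\begin{itemize}
\item Put $u=\mathfrak F_\omega^{-1}(\delta_0)\in\pi_0S[C_p]$.
\item Each $\mathfrak F_\omega(g)$ equals $1$ at the trivial character, so $\delta_0\cdot\mathfrak F_\omega(g)=\delta_0$, hence $ug=u$ for all $g$. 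This forces $u=c\sum_g g$ for some $c\in\pi_0S$.
\item Evaluating $\mathfrak F_\omega(u)=\delta_0$ at the trivial and at a nontrivial character gives $pc=1$ and $c\sum_k\omega^k=0$.
\item Hence $\sum_k\omega^k=p\cdot c\sum_k\omega^k=0$.
\end{itemize}
Alternatively, following the paper, deduce it from primitivity. The element $e_\omega=\frac1p\sum_k\omega^k$ is an idempotent with $\omega e_\omega=e_\omega$, and the augmentation $\one[C_p]\to\one$ is the splitting of $\frac1p\sum_g g$. So $S\otimes_{\one[C_p]}\one\simeq S[e_\omega^{-1}]$, which vanishes by primitivity, forcing $e_\omega=0$.

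\textbf{Faithfulness in the converse direction.} Your argument does not go through as written, for three reasons.
\begin{itemize}
\item Base-changing $\one[C_p]$ along $\one\to\one[\zeta_p]$ says nothing about whether that map is faithful.
\item An additive $\cC$ need not be $\ZZ$-linear (e.g.\ $\Sp[1/p]$), so $\one\otimes\ZZ[1/p][\zeta_p]$ is not available.
\item Faithful flatness over $\ZZ[1/p]$ would not transfer to $\cC$ by itself anyway.
\end{itemize}
The paper simply asserts faithfulness, but it follows from a splitting. Set $e=\frac1p\sum_g g$. Let $\iota\colon\one\to\one[C_p]=\bigoplus_{C_p}\one$ be the inclusion at the identity, and $\mathrm{pr}_1,\mathrm{pr}_\sigma$ the projections at the identity and at a generator $\sigma$. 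Let $i,r$ be the inclusion and projection of the summand $\one[\zeta_p]$, so $ir=1-e$. Then the unit $r\iota$ of $\one[\zeta_p]$ has the $\one$-linear retraction $(\mathrm{pr}_1-\mathrm{pr}_\sigma)\,i$, because $(\mathrm{pr}_1-\mathrm{pr}_\sigma)(1-e)\iota=(1-\tfrac1p)-(-\tfrac1p)=1$. Hence every $X$ is naturally a retract of $\one[\zeta_p]\otimes X$, so base change is conservative. This is exactly where additivity is used.
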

   \begin{proof}Suppose $\cC$ is virtually orientable. $\cC$ is additive precisely when the shear map
        \[\begin{bmatrix}
            1&1\\
            0&1
        \end{bmatrix}\colon X\oplus X\too X\oplus X\]
        is an isomorphism on all objects. It is enough to check this on the unit $\one_\cC$, and furthermore it suffices to check this after faithful extension of scalars. So we assume $\cC$ is orientable and let $\omega\colon \FF_p\to\one_\cC^\times$ be an orientation of height $0$. Equivalently, $\omega$ is a primitive root of unity. Thus there exists an endomorphism
        \[-1:=\omega+\omega^2+\ldots+\omega^{p-1}\]
        of $\one_\cC$ such that $1+(-1)=0$. The inverse to the shear map is then given by
        \[\begin{bmatrix}1&-1\\0&1\end{bmatrix}:\one_\cC\oplus\one_\cC\too\one_\cC\oplus\one_\cC.\]
   Conversely, suppose now that $
   \cC$ is additive. Then there is an idempotent
         \[\varepsilon=1-\frac{1}{p}\big(1+\sigma+\ldots+\sigma^{p-1}\big)\]
         in $\one_\cC[C_p]$ such that $\one_\cC[C_p][\varepsilon^{-1}]=:\one_\cC[\omega_p^{(0)}]$ is the $p$th cyclotomic extension. Since the cyclotomic extension is faithful and admits a primitive root of unity, the claim follows from \Cref{height0_roots}.
    \end{proof} 
    We next record the fact that virtual orientability interacts well with categorification. We first state a preliminary lemma.
  \begin{lemma}
  \label{ambi_faithful}
      Let $R\in\CAlg(\cC)^{BG}$, and suppose $BG$ is $\cC$-ambidextrous. Then $R^{hG}\to R$ is faithful.
  \end{lemma}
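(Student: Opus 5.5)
The plan is to show that $R^{hG}\to R$ admits a retraction as a map of $R^{hG}$-modules, which forces faithfulness. Recall that $f\colon A\to B$ in $\CAlg(\cC)$ is faithful when $-\otimes_A B\colon \Mod_A\to\Mod_B$ is conservative. A standard sufficient condition is that $A$ is a retract of $B$ as an $A$-module. Indeed, if $M\otimes_A B\simeq 0$, then $M\simeq M\otimes_A A$ is a retract of $M\otimes_A B\simeq 0$, so $M\simeq 0$. In the stable setting, conservativity then follows by applying this to cofibers. So the task reduces to producing an $R^{hG}$-linear map $R\to R^{hG}$ whose composite with the unit map $R^{hG}\to R$ is an equivalence.

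To produce it, I would use ambidexterity of $BG$. Let $q\colon BG\to\pt$. Ambidexterity gives a norm equivalence $q_!\simeq q_*$, so we have the integration (transfer) map $\int_{BG}\colon R=q^*q_*R\to q_*R$ in the sense of \S\ref{higher_comm_monoids}, where we view $R$ as an object of $\cC^{BG}$ and $q_*R=R^{hG}$. More precisely, the counit of $q_!\dashv q^*$ composed with the norm gives $R_{hG}\simeq R^{hG}$, and precomposing with $R\to R_{hG}$ gives a map $\mathrm{tr}\colon R\to R^{hG}$. By homogeneity of the semiadditive integral (\cite[Proposition 2.1.14]{teleambi}), or equivalently the projection formula for $q_!\simeq q_*$, this map is $R^{hG}$-linear. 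Composing $R^{hG}\to R\xrightarrow{\mathrm{tr}} R^{hG}$ gives the endomorphism $\int_{BG} q^*(-)$ of $R^{hG}$. By \Cref{dirac}, or directly by homogeneity, this is multiplication by $\int_{BG}1=|BG|$, that is, the cardinality $|BG|$ acting on $R^{hG}$.

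The main obstacle is that $|BG|$ need not be invertible, since $BG$ is only assumed ambidextrous and not of invertible cardinality. So this naive retraction may fail, and I would instead argue by conservativity directly.

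Suppose $M\in\Mod_{R^{hG}}$ satisfies $M\otimes_{R^{hG}}R\simeq0$. I would use that ambidexterity identifies $\Mod_{R^{hG}}$ with a suitable full subcategory of $\Mod_R(\cC)^{BG}$ via $M\mapsto M\otimes_{R^{hG}}R$, with inverse given by $(-)^{hG}$. This holds at least on modules obtained by this base change, where the projection formula applies because $q_*\simeq q_!$ commutes with colimits and is $\cC$-linear. Then
\[M\simeq (M\otimes_{R^{hG}} R)^{hG}\simeq 0.\]
The key input is the projection formula
\[(M\otimes_{R^{hG}}R)^{hG}\simeq M\otimes_{R^{hG}}R^{hG},\]
which holds because $q_*\simeq q_!$ preserves colimits and is linear over $\cC$. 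Checking this projection formula rigorously, by reducing via colimits to free modules $M=R^{hG}\otimes X$ where $(X\otimes R)^{hG}\simeq X\otimes R^{hG}$ since $q_!$ commutes with $X\otimes-$, is the step I expect to require the most care.
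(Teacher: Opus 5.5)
Your final argument is the paper's proof: if $M\otimes_{R^{hG}}R\simeq 0$, then $M\simeq M\otimes_{R^{hG}}R^{hG}\simeq (M\otimes_{R^{hG}}R)^{hG}\simeq 0$ by the projection formula. The paper cites that formula directly (\cite[Corollary 2.34]{fourier}) instead of deriving it by reducing to free modules, and your opening retraction attempt via $|BG|$ is unnecessary (it fails in general, as you note), so you can drop it.
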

  \begin{proof}
  Let $M$ be a $R^{hG}$-module such that $R\otimes M\simeq0$. Then
  \[0\simeq (R\otimes M)^{hG}\simeq R^{hG}\otimes M\]
  by the projection formula (\cite[Corollary 2.34]{fourier}), as desired.
  \end{proof}
  
  \begin{proposition}
  \label{virtual_categorification}
      Let $\cC\in\CAlg(\Pr^{n\textup{-}\oplus})$ and suppose that $R\in\CAlg(\cC)$ is virtually orientable of height $n$. Then $\Mod_R(\cC)\in\CAlg_\cC(\Pr)$ is virtually orientable of height $n+1$.
  \end{proposition}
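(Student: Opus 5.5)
The plan is to reduce to the orientable case by faithful descent, and then to use the categorification result of \cite{fourier}, which says that an orientation of height $n$ on $R$ produces an orientation of height $n+1$ on $\Mod_R(\cC)$.

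\textbf{Reduction to the orientable case.} Choose a faithful commutative $R$-algebra $S$ that is orientable of height $n$, with orientation $\omega\colon\Sigma^n\FF_p\to S^\times$. Base change along $R\to S$ gives a map $\Mod_R(\cC)\to\Mod_S(\cC)$ in $\CAlg_\cC(\Pr)$. I need two things:
\begin{enumerate}
\item $\Mod_S(\cC)$ is a faithful commutative $\Mod_R(\cC)$-algebra in $\Pr$;
\item $\Mod_S(\cC)$ is orientable of height $n+1$.
\end{enumerate}

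\textbf{Step 1: faithfulness.} Let $\mathcal M$ be a $\Mod_R(\cC)$-module in $\Pr$ with $\Mod_S(\cC)\otimes_{\Mod_R(\cC)}\mathcal M\simeq 0$. This tensor product identifies with $\Mod_S(\mathcal M)$. Since $S$ is a faithful $R$-module, every object of $\mathcal M$ with $S\otimes X\simeq 0$ vanishes. So if the free functor $\mathcal M\to\Mod_S(\mathcal M)$ lands in the zero category, then $S\otimes X\simeq0$ for all $X$, hence $\mathcal M\simeq 0$.

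\textbf{Step 2: orientability.} A height $n+1$ root of unity of $\Mod_S(\cC)$ is a map $\Sigma^{n+1}\FF_p\to\Mod_S(\cC)^\times$. Since $\Omega\,\Mod_S(\cC)^\times\simeq S^\times$ (the units at the unit object), we can take the delooping $\Sigma\omega$ of $\omega$; the relevant units are the Picard spectrum. The key claim is that the associated Fourier transform
\[\Mod_S(\cC)[M]\simeq \Fun(M,\Mod_S(\cC))\too \Mod_S(\cC)^{I^{(n+1)}M}\]
is an equivalence for all $(n+1)$-truncated $\pi$-finite $M$. I will use the fact that $\Mod_S(\cC)$ is $n$-semiadditive, so that colimits and limits over the $n$-finite space $M$ agree. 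With this, the Fourier transform becomes
\[\Mod_{S[M]}\too\Mod_{S^{I^{(n)}\Omega M}}.\]
This is induced by $\mathfrak F_\omega$ on $\Omega M$, which is an equivalence by orientability of $S$. The case where $M$ has a $\pi_0$ part ($M$ is not connected) should reduce to the product decomposition, using compatibility of $I^{(n+1)}$ with the fiber sequence of $M$.

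I expect this identification of categorical Fourier transforms with module categories over Fourier-transformed algebras to be the main obstacle. My first attempt will be to cite the categorification statement of \cite[Section~4--5]{fourier}, which proves exactly that orientations of height $n$ on $R$ categorify to orientations of height $n+1$ on $\Mod_R$. Failing that, I would argue by induction on the Postnikov tower of $M$, using the projection formula and \Cref{ambi_faithful}-style descent for the $\pi_0$ part.
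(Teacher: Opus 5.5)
Your Step 2 uses the same input as the paper, namely that $\Mod_S(\cC)$ is orientable of height $n+1$ whenever $S$ is orientable of height $n$ (\cite[Corollary 5.16]{fourier}), and citing it is fine. The gap is in Step 1, in the sentence ``since $S$ is a faithful $R$-module, every object of $\mathcal M$ with $S\otimes X\simeq 0$ vanishes.'' This does not follow from the hypothesis. Faithfulness of $R\to S$ in $\cC$ only says that $S\otimes_R N\simeq 0$ implies $N\simeq 0$ for $N\in\Mod_R(\cC)$. For $X$ in an arbitrary $\Mod_R(\cC)$-module $\mathcal M\in\Pr$, you only know that $-\otimes X\colon\Mod_R(\cC)\to\mathcal M$ is a colimit-preserving linear functor. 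Its kernel is therefore a localizing tensor ideal containing $S$. To conclude $X\simeq 0$, you need $R$ to lie in the localizing ideal generated by $S$. Taking $\mathcal M$ to be the quotient of $\Mod_R(\cC)$ by that ideal shows this condition is exactly what is required. It is a priori stronger, descent-type information than faithfulness, which only says that $S$ and $R$ have the same Bousfield class. An arbitrary faithful orientable $S$, with no finiteness, is not known to satisfy it.

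This is why the paper does not take an arbitrary $S$. It takes $S$ to be the universal orientable $R$-algebra, which under the virtual orientability hypothesis is a faithful $\FF_p^\times$-Galois extension of $R$ (\cite[Proposition 4.22, Proposition 4.24]{fourier}). Galois descent then gives $\Mod_R(\cC)\simeq\Mod_S(\cC)^{h\FF_p^\times}$ in $\Pr$. Since $B\FF_p^\times$ is ambidextrous in $\Pr$, the projection formula (\Cref{ambi_faithful}) shows that $\Mod_R(\cC)\to\Mod_S(\cC)$ is faithful.

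Alternatively, with this Galois $S$ you could rescue your kernel argument when $\cC$ is stable, using that $S$ is dualizable over $R$. The right orthogonal of the localizing ideal generated by $S$ is then $\{N : S^\vee\otimes_R N\simeq 0\}$. This vanishes by faithfulness, because $S$ and $S^\vee$ have the same Bousfield class: each is a retract of $S\otimes S^\vee\otimes S$, respectively $S^\vee\otimes S\otimes S^\vee$. However, the statement does not assume stability, so the Galois descent route is the one that works as stated. Either way, the missing idea is to choose $S$ with extra structure (Galois, hence dualizable), rather than an arbitrary faithful orientable algebra.

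A minor point: in your Step 2 sketch, the relevant semiadditivity is that $\Mod_\cC(\Pr)$ is $(n+1)$-semiadditive, and $\Omega^\infty M$ is $(n+1)$-finite rather than $n$-finite. Since you cite \cite[Corollary 5.16]{fourier}, this does not affect the proof.
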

  \begin{proof}
      Let $S\in\CAlg(\cC)$ be the universal orientable commutative $R$-algebra in $\cC$, which is a faithful $\FF_p^\times$-Galois extension of $R$ (\cite[Proposition 4.22, Proposition 4.24]{fourier}). Then $\Mod_S(\cC)$ is orientable of height $n+1$ (\cite[Corollary 5.16]{fourier}). As $R\to S$ is a faithful Galois extension, Galois descent provides an equivalence
      \[\Mod_R(\cC)\iso \Mod_S(\cC)^{h\FF_p^\times}.\]
      It follows that $\Mod_R(\cC)\to\Mod_S(\cC)$ is a faithful map of commutative algebras in $\Mod_\cC(\Pr)$ by \Cref{ambi_faithful}.
  \end{proof}
  Another family of examples consists of the chromatic localizations.
  \begin{proposition}[{\cite[Proposition 7.27]{fourier}}]
\label{tn_vo}
      The category $\Sp_{T(n)}$ is virtually orientable of height~$n$.
  \end{proposition}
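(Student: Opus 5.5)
The statement is cited from \cite[Proposition 7.27]{fourier}, but here is how I would argue it from the results recorded above. Since $\Sp_{T(n)}$ is $\infty$-semiadditive of height $n$ (\cite{teleambi,ambiheight}), the cyclotomic extension $\SS_{T(n)}[\omega_p^{(n)}]$ is defined. Virtual orientability of $\Sp_{T(n)}$ means virtual orientability of its unit. The plan has three steps. First, find a faithful commutative $\SS_{T(n)}$-algebra $E$ that carries a primitive height $n$ root of unity. Second, show that the associated Fourier transform $\mathfrak F_\omega$ is an isomorphism on all $M\in\Mod_{\FF_p}^{[0,n]\text{-fin}}(\Sp_\textup{cn})$, so that $E$ is orientable. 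Third, conclude directly from the definition, since $E$ is faithful over $\SS_{T(n)}$.

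For $E$ I would take a Lubin--Tate theory $E_n$, or a $T(n)$-local Galois extension of it, chosen so that it contains the needed roots of unity. Faithfulness holds because $E_n$ detects $K(n)$-local, hence $T(n)$-locally nonzero, objects. More precisely, one needs that $E_n\otimes X=0$ implies $X=0$ in $\Sp_{T(n)}$. I would obtain this from the nilpotence theorem, or from the $K(n)$-local Galois descent $L_{K(n)}\SS\simeq E_n^{h\mathbb G_n}$, combined with a comparison between $T(n)$- and $K(n)$-localizations of $E$-modules. Alternatively, I would take $E=\SS_{T(n)}[\omega_p^{(n)}]$ itself and use \Cref{ambi_faithful}-style arguments. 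This route requires knowing that the extension is Galois, which is a special case of the claim, so I would use it only as a fallback.

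The root of unity comes from the formal group. A level structure, meaning a choice of a nonzero $p$-torsion point of the height $n$ formal group, gives a map $\Sigma^n\FF_p\to E^\times$ after passing to an appropriate extension such as $E_n$ adjoined Lubin--Tate level structure. Equivalently, one can use the identification $E^{B^n C_p}$ and the orientation theory of Hopkins--Lurie. Primitivity should follow from the fact that $E$ has height exactly $n$.

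The main obstacle is showing that $\mathfrak F_\omega\colon E[M]\to E^{\lfloor I^{(n)}M\rfloor}$ is an isomorphism for every truncated $\FF_p$-module $M$. I would first use naturality and the fact that both sides send fiber sequences of $M$ to the appropriate pullback or pushout structures. This reduces the problem to the basic cases $M=\Sigma^k\FF_p$ for $0\le k\le n$. Each basic case then becomes a computation of $E^{B^kC_p}$ against $E[B^{n-k}C_p]$ via Pontryagin duality. This is exactly the content of the Hopkins--Lurie result that $E^{B^kC_p}$ is free of the expected rank and is self-dual under the ambidexterity pairing. Checking that the semiadditive Fourier map agrees with the Hopkins--Lurie duality map is the delicate point. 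I expect to match them on $\pi_0$ using the level structure and then to appeal to the formal group computation of \cite{HL}.
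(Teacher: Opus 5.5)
Your argument breaks at its first step. $E_n$ is in general \emph{not} faithful in $\Sp_{T(n)}$, and the same holds for any $E_n$-algebra, such as an extension adjoining level structures.

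An $E_n$-module is both $L_n$- and $L_n^f$-local, so its $T(n)$- and $K(n)$-localizations agree. Hence for $X\in\Sp_{T(n)}$ the tensor product in $\Sp_{T(n)}$ is $L_{T(n)}(E_n\otimes X)\simeq L_{K(n)}(E_n\otimes X)$, and this vanishes whenever $K(n)_*X=0$. So $E_n$ is faithful in $\Sp_{T(n)}$ exactly when every $K(n)$-acyclic $T(n)$-local spectrum is zero, i.e. when the telescope conjecture holds at height $n$. That conjecture is now known to fail for $n\ge 2$ (Burklund--Hahn--Levy--Schlank), and in any case \Cref{tn_vo} must not depend on it.

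The inference in your proposal goes the wrong way: every $K(n)$-local spectrum is $T(n)$-local, not conversely. The nilpotence theorem does not repair this. It only shows that $L_{K(n)}\colon\Sp_{T(n)}\to\Sp_{K(n)}$ is nil-conservative, meaning it detects vanishing of \emph{ring} objects. Faithfulness is a statement about arbitrary $X$. What your outline actually proves, granting the Hopkins--Lurie comparison, is that $\Sp_{K(n)}$ is virtually orientable. That gives \Cref{tn_vo} only for $n\le 1$, where $\Sp_{T(n)}=\Sp_{K(n)}$.

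Your fallback is not circular: the Galois property of $\SS_{T(n)}[\omega_p^{(n)}]$ is proved independently in \cite{cyclo}, and together with \Cref{ambi_faithful} it gives faithfulness. It is still not enough. Galoisness only yields the Fourier isomorphism on discrete $M$ (\Cref{general_height_galois}), whereas orientability also requires $M=\Sigma^k\FF_p$ for $0<k<n$. You cannot compute these cases by Hopkins--Lurie inside $\Sp_{T(n)}$. Nor can you lift them from $\Sp_{K(n)}$, since $L_{K(n)}$ is not conservative on $\Sp_{T(n)}$.

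The missing ingredient is a way to transfer from $\Sp_{K(n)}$ using only nil-conservativity:
\begin{enumerate}
\item Take the witness to be an algebra built semiadditively inside $\Sp_{T(n)}$, for example the universal orientable algebra $\SS_{T(n)}[\omega_{\FF_p}^{(n)}]$. Its formation commutes with $L_{K(n)}$, since it is defined purely in terms of the semiadditive structure and Fourier transforms.
\item Reduce its faithfulness, via the Galois/fixed-point argument of \Cref{ambi_faithful}, to the vanishing of suitable ring objects, such as complementary idempotent factors or endomorphism rings of objects it kills.
\item Check those vanishings after $K(n)$-localization, using nil-conservativity. There your Lubin--Tate argument does apply.
\end{enumerate}
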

  \pagebreak
  Together, the previous two propositions imply the following:
  \begin{corollary}
   \label{modr_virtual}
      Let $R$ be a $T(n)$-local commutative ring spectrum. Then the commutative algebra $\Mod_R(\Sp_{T(n)})\in\CAlg(\Pr)$ is virtually orientable of height $n+1$.
  \end{corollary}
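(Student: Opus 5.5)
The plan is to combine \Cref{tn_vo} with \Cref{virtual_categorification}, applied in the ambient category $\cC=\Sp_{T(n)}$. First, $\Sp_{T(n)}$ is $\infty$-semiadditive by \cite{teleambi}. In particular it is $n$-semiadditive, and so $\Sp_{T(n)}\in\CAlg(\Pr^{n\textup{-}\oplus})$, which is the hypothesis on $\cC$ in \Cref{virtual_categorification}.

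Next, we check that $R$, viewed as an object of $\CAlg(\Sp_{T(n)})$, is virtually orientable of height $n$. By \Cref{tn_vo} there is a faithful commutative algebra $S_0\in\CAlg(\Sp_{T(n)})$ over the unit $\SS_{T(n)}$ which is orientable of height $n$. Take $S:=R\otimes S_0$, the $T(n)$-local tensor product.
\begin{itemize}
\item \emph{Orientability.} Push an orientation $\omega\colon\Sigma^n\FF_p\to S_0^\times$ forward along $S_0\to S$. The Fourier transform is natural in the commutative algebra and compatible with base change, since $R\otimes S_0[M]\simeq S[M]$ and $R\otimes S_0^{\lfloor I^{(n)}M\rfloor}\simeq S^{\lfloor I^{(n)}M\rfloor}$ because $\pi$-finite limits commute with tensoring in a semiadditive category. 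Hence $\mathfrak F_\omega$ for $S$ is the base change of an isomorphism, so it is again an isomorphism.
\item \emph{Faithfulness.} Faithfulness is stable under base change. If $M$ is an $R$-module with $S\otimes_R M\simeq S_0\otimes M\simeq 0$, then $M\simeq0$ by faithfulness of $S_0$.
\end{itemize}

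Finally, \Cref{virtual_categorification} gives that $\Mod_R(\Sp_{T(n)})$ is virtually orientable of height $n+1$ as an object of $\CAlg_{\Sp_{T(n)}}(\Pr)$. Forgetting the $\Sp_{T(n)}$-linear structure yields the claim in $\CAlg(\Pr)$. One caveat is that the notion of orientation should only depend on the underlying commutative algebra structure together with the semiadditivity of the ambient category. This holds because \cite{fourier} phrases roots of unity and Fourier transforms in terms of the unit $\Mod_R$ in $\CAlg(\Pr)$.

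The main point needing care is the base-change step. One must verify that the Fourier transform for $S$ is really obtained from the one for $S_0$ by applying $R\otimes-$. Alternatively, one can cite the corresponding statement in \cite[Section~4]{fourier} that orientability and virtual orientability are preserved under maps of commutative algebras.
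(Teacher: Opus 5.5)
Your proposal is correct and follows the same route as the paper: \Cref{tn_vo} makes $R$ virtually orientable of height $n$, \Cref{virtual_categorification} applies with $\cC=\Sp_{T(n)}$, and one then forgets along $\Pr_{T(n)}\to\Pr$. Your base-change argument from the unit to $R$ usefully spells out a step the paper leaves implicit. For the final step, the paper handles orientations by citing \cite[Proposition 4.25]{descent} (lax symmetric monoidal $\infty$-semiadditive functors carry Fourier transforms to Fourier transforms), which is more precise than your informal justification; it also notes separately that the forgetful functor preserves faithful maps, a point your write-up should state explicitly rather than discussing only orientations.
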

  \begin{proof}
  Since $R$ is $T(n)$-local, it is virtually orientable of height $n$ (\Cref{tn_vo}). By \Cref{virtual_categorification}, $\Mod_R\in\CAlg(\Pr_{T(n)})$ is virtually orientable of height $n+1$. 
  
  The forgetful functor $\Pr_{T(n)}\to\Pr$ preserves virtual orientability, as it preserves orientability by \cite[Proposition 4.25]{descent} and also preserves faithful morphisms. Thus $\Mod_R$ is virtually orientable in $\CAlg(\Pr)$ as well.
  \end{proof}

    \subsubsection{Universal Virtually Orientable Categories}
   In this subsection, we construct the universal example of a virtually orientable category of height $n$. Recall that there is an idempotent algebra $\tsadi_n\in\CAlg(\Pr)$ that classifies the property of being stable, $p$-local, and $\infty$-semiadditive of height $n$ (\cite[Theorem 5.3.6]{ambiheight}). 
    
    Denote $\one:=\one_{\tsadi_n}$, and recall that $\one[\omega_{\FF_p}^{(n)}]$ is the universal commutative algebra in $\tsadi_n$ equipped with an orientation of height $n$ (constructed in \cite[Section~4.2]{fourier}).\footnote{Note that $\one[\omega_{\FF_p}^{(n)}]$ is \emph{not} the higher cyclotomic extension $\one[\omega_p^{(n)}]$.}

    \begin{defn}Let $\mdef{\widehat\tsadi_n}$ be the Bousfield localization of $\tsadi_n$ with respect to $\one[\omega_{\FF_p}^{(n)}]$.
    \end{defn}

    We first show that $\tsadi_n\to\widehat\tsadi_n$ is a particularly simple type of localization (compare to \cite[Proposition 6.19]{fourier}).
\begin{proposition}
    The localization $\tsadi_n\to\widehat\tsadi_n$ is smashing, with unit $\one[\omega_{\FF_p}^{(n)}]^{h\FF_p^\times}$.
\end{proposition}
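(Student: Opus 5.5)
Set $E:=\one[\omega_{\FF_p}^{(n)}]$ and $A:=E^{h\FF_p^\times}$. The plan is to show that Bousfield localization at $E$ coincides with the smashing localization $A\otimes(-)$. Concretely, I will prove three things. First, $A$ is an idempotent commutative algebra, i.e.\ $A\otimes A\simeq A$ via the multiplication. Second, the $A$-local objects are exactly the $E$-local objects. Third, $\one\to A$ is an $E$-equivalence. Together these identify $\widehat\tsadi_n$ with $\Mod_A(\tsadi_n)$ and show that the unit is $A$.

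The key input is the structure of $E$ over $A$, as recorded in \cite[Proposition 4.22, Proposition 4.24]{fourier} and used in the proof of \Cref{virtual_categorification}. There $E$ is the universal orientable commutative $\one$-algebra and carries an $\FF_p^\times$-action. I would check that $A\to E$ is a faithful $\FF_p^\times$-Galois extension. Since $|\FF_p^\times|$ is prime to $p$ and we are $p$-local, $B\FF_p^\times$ is ambidextrous and homotopy fixed points are a retract. Hence $\one\to A$ is a retract-type map, and $A$ is a direct summand of $E$ as a module.

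With this in hand the steps are as follows.

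\begin{enumerate}
\item Show that $E\otimes A\simeq E$. By the Galois property, $E\otimes_A E\simeq \prod_{\FF_p^\times}E$. The universal property of $E$ gives a compatible description of $E\otimes E$, which I expect to be $\prod_{\FF_p^\times}E$ as well, since orientations of height $n$ form an $\FF_p^\times$-torsor once one exists. Comparing the two shows $E\otimes A\to E\otimes E$ is identified with the inclusion of fixed points $(E\otimes E)^{h\FF_p^\times}\simeq E$.
\item Deduce that $\one\to A$ is an $E$-equivalence, so every $E$-local object is $A$-local.
\item Conversely, if $X\otimes E=0$, then $X\otimes A=0$, since $A$ is a retract of $E$. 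Hence $A$-acyclics and $E$-acyclics coincide. In particular $A\otimes A\to A$ is an $E$-equivalence between $E$-local objects, so it is an equivalence, and $A$ is idempotent. Moreover, $A$-modules are precisely the $E$-local objects, which gives smashing.
\end{enumerate}

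The main obstacle is step 1: computing $E\otimes E$, or equivalently showing that $E$ is ``Galois over its fixed points'' in the strong sense $E\otimes A\simeq E$. I would first try to get this directly from the universal property, comparing $E\otimes E$ to $E[\omega^{(n)}_{\FF_p}]$, i.e.\ orientations in an already oriented $E$. In that setting orientations form the $\FF_p^\times$-torsor $\mathrm{Aut}(\FF_p)$ by the Fourier-theoretic classification in \cite{fourier}, mimicking \cite[Proposition 6.19]{fourier}.
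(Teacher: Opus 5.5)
Your proposal is correct and is essentially the paper's argument. In both, the crux is that the unit $\one\to A$ becomes an equivalence after base change to the already-oriented $E$, combined with $A$ and $E$ having the same acyclics by ambidexterity of $B\FF_p^\times$. You use the averaging retraction where the paper cites faithfulness of $A\to E$. You deduce idempotence from $A\otimes A\to A$ being an $E$-equivalence between $E$-local objects (true because both are retracts of $E$-modules), whereas the paper simply takes $\FF_p^\times$-fixed points of $A\otimes E\simeq E$.

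The Galois property of $A\to E$ is not needed as an input; it is rather a consequence. The $\FF_p^\times$-equivariant identification $E\otimes E\simeq\prod_{\FF_p^\times}E$, together with the fact that $E\otimes(-)$ commutes with $\FF_p^\times$-homotopy fixed points, already gives $E\otimes A\simeq E$.
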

\begin{proof}
      As in \cite[Section 4.2]{fourier}, the commutative algebra $\one[\omega_{\FF_p}^{(n)}]$ has an $\FF_p^\times$-action. We will prove that the localization $\tsadi_n\to\widehat\tsadi_n$ is given by the formula
      \[X\longmapsto (X\otimes\one[\omega_{\FF_p}^{(n)}])^{h\FF_p^\times}\simeq X\otimes\one[\omega_{\FF_p}^{(n)}]^{h\FF_p^\times}\]
      by showing that $\one[\omega_{\FF_p}^{(n)}]^{h\FF_p^\times}$ is an idempotent algebra in the same Bousfield class as $\one[\omega_{\FF_p}^{(n)}]$.

     We first show they lie in the same Bousfield class. This is because $\one[\omega_{\FF_p}^{(n)}]$ is faithful over $\one[\omega_{\FF_p}^{(n)}]^{h\FF_p^\times}$ by \Cref{ambi_faithful}, so that $X$ is $\one[\omega_{\FF_p}^{(n)}]^{h\FF_p^\times}$-acyclic if and only if it is $\one[\omega_{\FF_p}^{(n)}]$-acyclic.

      We now show that $\one[\omega_{\FF_p}^{(n)}]^{h\FF_p^\times}$ is idempotent. Since the algebra $\one[\omega_{\FF_p}^{(n)}]$ admits an orientation of height $n$, after base change along $\one\to\one[\omega_{\FF_p}^{(n)}]$ the unit $\one\to\one[\omega_{\FF_p}^{(n)}]^{h\FF_p^\times}$ becomes an isomorphism
      
            \[
            \one\otimes\one[\omega_{\FF_p}^{(n)}]\iso \one[\omega_{\FF_p}^{(n)}]^{h\FF_p^\times}\otimes\one[\omega_{\FF_p}^{(n)}].
            \]
      Taking $\FF_p^\times$-fixed points gives the result.
        \end{proof}
        By \cite[Proposition~4.27]{fourier}, $\widehat\tsadi_n$ is the initial symmetric monoidal {localization} of $\tsadi_n$ that is virtually orientable of height~$n$. We now show that $\widehat\tsadi_n$ is the universal such category among \emph{all} presentably symmetric monoidal categories (and not merely those which are localizations of $\tsadi_n$).
    \begin{proposition}
    \label{tsadibar}
    ${\widehat\tsadi_n}\in\CAlg(\Pr)$ is the universal stable, $p$-local, $\infty$-semiadditive, and virtually orientable category of height $n$.
    \end{proposition}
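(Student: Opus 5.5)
We must show: for $\cD\in\CAlg(\Pr)$, a symmetric monoidal left adjoint $\widehat\tsadi_n\to\cD$ exists (and is then unique, the space of such being contractible) iff $\cD$ is stable, $p$-local, $\infty$-semiadditive of height $n$, and virtually orientable. Equivalently, $\widehat\tsadi_n$ is an idempotent algebra in $\CAlg(\Pr)$ whose modules are exactly such $\cD$.

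\textbf{Idempotence.} Since $\tsadi_n$ is an idempotent algebra in $\Pr$, and $\widehat\tsadi_n$ is a smashing localization of $\tsadi_n$ by the preceding proposition, we get
\[\widehat\tsadi_n\simeq\Mod_{E}(\tsadi_n),\qquad E:=\one[\omega_{\FF_p}^{(n)}]^{h\FF_p^\times},\]
with $E$ an idempotent algebra in $\tsadi_n$. Modules over an idempotent algebra in an idempotent algebra are again idempotent, so $\widehat\tsadi_n$ is idempotent in $\CAlg(\Pr)$. Its modules are the $\tsadi_n$-modules $\cD$ for which the unit $\one_\cD\to E_\cD$ is an isomorphism, where $E_\cD$ is the image of $E$ in $\cD$.

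\textbf{If $\cD$ is a $\widehat\tsadi_n$-module, it has the four properties.} It is already stable, $p$-local and $\infty$-semiadditive of height $n$ by \cite[Theorem 5.3.6]{ambiheight}. For virtual orientability, use that symmetric monoidal left adjoints preserve orientations: the Fourier transform is built from colimits, tensors and the semiadditive norms, all of which are preserved. They also preserve faithful maps of the form $R^{hG}\to R$, by \Cref{ambi_faithful} applied after the functor, since ambidextrous limits commute with such functors. Hence $\one_{\widehat\tsadi_n}\to\one[\omega_{\FF_p}^{(n)}]\otimes\one_{\widehat\tsadi_n}$ is faithful and orientable, so $\widehat\tsadi_n$ is virtually orientable. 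This property transfers along the unique map $\widehat\tsadi_n\to\cD$.

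\textbf{Conversely, the four properties make $\cD$ a $\widehat\tsadi_n$-module.} Let $\cD$ be stable, $p$-local, $\infty$-semiadditive of height $n$ and virtually orientable. Then $\cD$ is a $\tsadi_n$-module, and we must show $\one_\cD\to E_\cD$ is an isomorphism.

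1. Choose a faithful $S\in\CAlg(\cD)$ carrying an orientation.
2. By universality of $\one[\omega_{\FF_p}^{(n)}]$ (\cite[Section 4.2]{fourier}), the orientation gives a map of commutative algebras $\one[\omega_{\FF_p}^{(n)}]_\cD\to S$.
3. By faithfulness of $S$, it suffices to check that $S\to S\otimes E_\cD$ is an isomorphism.
4. This is exactly the argument in the proof of the previous proposition: $S\otimes\one[\omega_{\FF_p}^{(n)}]\simeq S^{\FF_p^\times}$, using that $S$ is oriented and the orientations of $S$ form an $\FF_p^\times$-torsor. Taking $\FF_p^\times$-fixed points, which commutes with $S\otimes-$ by the projection formula and ambidexterity, gives $S\otimes E_\cD\simeq S$.

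\textbf{Main obstacle.} The key step is the identification $S\otimes\one[\omega_{\FF_p}^{(n)}]\simeq \prod_{\FF_p^\times}S$ for an oriented $S$. It needs the structure of $\one[\omega_{\FF_p}^{(n)}]$ from \cite{fourier}, in particular \cite[Proposition 4.27]{fourier}. One could shortcut by using that result directly: $\cD$ virtually orientable implies $\tsadi_n\to\cD$ inverts $\one[\omega_{\FF_p}^{(n)}]$-equivalences. This is because a $\one[\omega_{\FF_p}^{(n)}]$-acyclic object becomes $S$-acyclic via step 2, and is therefore zero by faithfulness. That suffices, since $\widehat\tsadi_n$ is the Bousfield localization at $\one[\omega_{\FF_p}^{(n)}]$.
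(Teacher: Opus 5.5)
Your closing ``shortcut'' is the paper's argument. The unique $F\colon\tsadi_n\to\cD$ kills $\one[\omega_{\FF_p}^{(n)}]$-acyclics because there is a map of commutative algebras $F(\one[\omega_{\FF_p}^{(n)}])\to S$ to a faithful oriented $S$. Hence $F(\one[\omega_{\FF_p}^{(n)}])$ is faithful, and $F$ factors uniquely through the localization. Your forward direction is fine: virtual orientability of $\widehat\tsadi_n$ and its modules follows from \Cref{ambi_faithful} and the fact that the unit of $\widehat\tsadi_n$ is $\one[\omega_{\FF_p}^{(n)}]^{h\FF_p^\times}$. The paper simply takes this direction from \cite[Proposition 4.27]{fourier}.

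The gap is Step 2, which is exactly where the paper's proof does its work. The universal property of $\one[\omega_{\FF_p}^{(n)}]$ is a statement in $\CAlg(\tsadi_n)$. It does not by itself say that $F(\one[\omega_{\FF_p}^{(n)}])$ corepresents orientations in $\CAlg(\cD)$. Your observation that symmetric monoidal left adjoints preserve orientations points the wrong way: it shows $F(\one[\omega_{\FF_p}^{(n)}])$ is oriented, not that it maps to every oriented algebra of $\cD$.

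Let $G$ be the right adjoint of $F$. By adjunction, a map $F(\one[\omega_{\FF_p}^{(n)}])\to S$ is the same as a map $\one[\omega_{\FF_p}^{(n)}]\to G(S)$, i.e.\ an orientation of $G(S)$. So you must show that $G$ preserves orientations. The paper does this by noting that $G$ is lax symmetric monoidal and $\infty$-semiadditive, hence carries the Fourier transform over $S$ to the Fourier transform over $G(S)$ \cite[Proposition 4.25]{descent}. Alternatively, you would need to show that the universal oriented algebra is compatible with base change, which requires going into its construction in \cite{fourier}.

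Your primary route through Step 4 also rests on Step 2, since splitting $S\otimes F(\one[\omega_{\FF_p}^{(n)}])$ uses the map $F(\one[\omega_{\FF_p}^{(n)}])\to S$. Note too that orientations of $S$ need not form an $\FF_p^\times$-torsor in general (for a product $S_1\times S_2$ they can be chosen factorwise), though only the splitting is actually needed.
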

        \begin{proof}
               Let $\cC\in\CAlg(\Pr)$ be another such category. Since $\cC$ is a module over the idempotent algebra $\tsadi_n\in\CAlg(\Pr)$, there is a unique colimit-preserving symmetric monoidal functor
            \[F\colon\tsadi_n\too\cC.\]
           Since $\widehat\tsadi_n$ is a Bousfield localization of $\tsadi_n$, there is at most one colimit-preserving symmetric monoidal functor $\widehat\tsadi_n\to\cC$. To prove that there indeed exists such a functor, we will show that $F$ vanishes on all $\one[\omega_{\FF_p}^{(n)}]$-acyclics $X$. For this, we claim it is enough to show that $F(\one[\omega_{\FF_p}^{(n)}])\in\CAlg(\cC)$ is faithful. Indeed, this is because
           \[0\simeq F(X\otimes\one[\omega_{\FF_p}^{(n)}])\simeq F(X)\otimes F(\one[\omega_{\FF_p}^{(n)}]).\]
           To show $F(\one[\omega_{\FF_p}^{(n)}])$ is faithful, let $G$ be the right adjoint of $F$, and let $S$ be the universal orientable commutative algebra in $\cC$. As a lax symmetric monoidal $\infty$-semiadditive functor, $G$ takes the Fourier transform over $S$ to the Fourier transform over $G(S)$ (\cite[Proposition 4.25]{descent}). In particular, $G(S)$ is orientable, so that there is a map of commutative algebras
           \[\one[\omega_{\FF_p}^{(n)}]\too G(S)\qin\CAlg(\tsadi_n),\]
           which corresponds to a map of commutative algebras
           \[F(\one[\omega_{\FF_p}^{(n)}])\too S\qin\CAlg(\cC).\]
           Since $\cC$ is virtually orientable, $S$ is faithful (\cite[Proposition 4.24]{fourier}), and thus $F(\one[\omega_{\FF_p}^{(n)}])$ is faithful as well.
\end{proof}

        \subsection{Semiadditive Power Operations}
        \label{semiadditive_power_operations}
In this subsection, we introduce a class of power operations coming from the higher semiadditive structure of a symmetric monoidal category $\cC$.
    \label{semiadditive_powers}
   
    \begin{defn}
        Let $\cC$ be a symmetric monoidal category, and let $R\in\CAlg(\cC)$. The \textit{$n$th total power operation} is the map of spaces
        \[\mdef{\mathcal P_n\colon \underline R\too\underline R^{B\Sigma_n}}\]
        defined as the composition
        \[\underline R:=\Map(\one_{\cC},R)\xlongrightarrow{(-)^{\otimes n}}\Map(\one_{\cC},R^{\otimes n})^{h\Sigma_n}\iso \Map(\one_\cC,(R^{\otimes n})^{h\Sigma_n})\too\Map(\one_\cC,R^{B\Sigma_n})=:\underline R^{B\Sigma_n}.\]
    \end{defn}
    
    \begin{rem}
     We think of $\mathcal P_n$ as the $n$th \textit{equivariant} power
        \[x\longmapsto x^n,\]
        where we remember the $n$-fold symmetry of $x^n$. For this reason, we sometimes denote $\mathcal P_n(x)$ by $x^n$ when the meaning is clear. The function $\mathcal P_n$ is multiplicative, but not additive.
    \end{rem}
        
    When the category $\cC$ is $1$-semiadditive, a commutative algebra $R\in\CAlg(\cC)$ has {semiadditive power operations} obtained by composing $\mathcal P_n$ with the semiadditive integral:
\begin{defn}
\label{semiadditive_powers_defn}
Let $\cC$ be a $1$-semiadditive symmetric monoidal category, and let $R\in\CAlg(\cC)$. For a finite group $G$ acting on $[n]$ and $V\in \pi_0 R^{BG}$, we define the operation
    
    \begin{align*}\mdef{\beta_G^V:\underline R\>}&\mdef{\too\underline R}
       \end{align*}
       by
       \[\beta_G^V(x):=\int_{BG}Vx^n.\]
       Formally, $\beta_G^V$ is the composition \[\underline R\xlongrightarrow{\mathcal P_n}\underline R^{B\Sigma_n}\too \underline R^{BG}\xlongrightarrow{V\cdot(-)} \underline R^{BG}\xlongrightarrow{\int_{BG}(-)} \underline R.\]
\end{defn}
The operations $\beta_G^V$ are natural in maps of commutative algebras. Namely, if $f\colon R\to S$ is a map of commutative algebras, then there is a commutative square of spaces
\[\begin{tikzcd}[ampersand replacement=\&,column sep=large]
	{\underline R} \& {\underline R} \\
	{\underline S} \& {\underline S.}
	\arrow["{\beta_G^V}", from=1-1, to=1-2]
	\arrow[from=1-1, to=2-1]
	\arrow[from=1-2, to=2-2]
	\arrow["{\beta_G^{f(V)}}", from=2-1, to=2-2]
\end{tikzcd}\]

This definition encompasses semiadditive power operations that have previously been considered in the literature:
\begin{example}
\label{alpha}
In \cite{teleambi}, the authors consider the semiadditive power operation
    \[\alpha(x):=\int_{BC_p}x^p.\]
    Integration over $BC_p$ should be thought of as `dividing by the $C_p$-action', so that the operation $\alpha(x)$ is a version of the operation `$x\mapsto \frac{x^p}{p}$'. Indeed, they show that it satisfies the equation
\[\alpha(x+y)=\alpha(x)+\alpha(y)+\frac{(x+y)^p-x^p-y^p}{p}.\]

 \end{example}

   \begin{example}
       In \cite{keidar2025semiadditivealternatingpowerstwisted}, the authors define \textit{twisted power operations}. A twisted power operation is an operation $\beta_G^V$ for which $V$ is a (higher) character of the group $G$.
   \end{example}
\section{Algebraic $K$-Theory}
In this section we relate the semiadditive power operations of \Cref{semiadditive_power_operations} to power operations on algebraic $K$-theory defined using polynomial functoriality.

   \subsection{Polynomial Operations on Algebraic $K$-Theory}
\label{poly_operations_k_theory}
Recall that the algebraic $K$-theory space of a ring naturally extends to an invariant \[\mdef{\uK\colon \Cat_\perf\to\Spc}\] of stable (idempotent-complete) categories. The functor $\uK$ admits a unique lift to (connective) spectra, which is the algebraic $K$-theory functor \[\mdef{K\colon\Cat_\perf\to \Sp}.\] The algebraic $K$-theory of a ring spectrum $R$ is then recovered by setting
\[\mdef{K(R):=K(\LMod_R^\text{ldbl}).}\]

$\uK$ is functorial more generally in polynomial functors:
\begin{theorem}[{\cite{polynomial}}]
\label{polynomial_functoriality}
Let $\mdef{\Cat_\perf^\poly}$ be the category of idempotent-complete stable categories and polynomial functors between them (in the sense of \cite[Definition 2.11]{polynomial}). Then $\uK$ extends to a functor
    \[\uK\colon\Cat_\perf^\poly\too\Spc.\]
\end{theorem}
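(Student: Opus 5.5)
This is a cited result from \cite{polynomial}, so we only sketch how the extension arises rather than reprove it. The organizing idea is to present $\uK$ as the group completion of the core of a stable category. Polynomial functors preserve equivalences, so they induce maps on cores. The task is then to show that the group completion is functorial in polynomial maps and not only in exact ones.

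\textbf{Step 1: reduce to an additive problem.} Rather than using the $S_\bullet$-construction directly, I would characterize $\uK$ by a universal property. It is the initial functor under $\iota(-)$ (the core) that is additive on split Verdier sequences and grouplike. The key input is that a polynomial functor $P\colon\cC\to\cD$ of degree $\le d$ has cross-effects. Its value on a direct sum $x\oplus y$ splits as $P(x)\oplus P(y)\oplus\mathrm{cr}_2P(x,y)$, and similarly for extensions, up to filtered pieces.

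\textbf{Step 2: induct on degree.}
\begin{enumerate}
\item Degree $0$ polynomial functors are constant. Degree $1$ polynomial functors are exact up to a constant, and for these functoriality is classical.
\item For a functor of degree $d$, the $2$-cross-effect has lower degree in each variable. Using it, the induced map $\iota\cC\to\iota\cD\to\uK(\cD)$ becomes ``polynomial'' as a map of $\mathbb E_\infty$-spaces.
\item One then needs that a polynomial map of $\mathbb E_\infty$-monoids into a grouplike target extends uniquely through the group completion. This is the analogue of the classical fact that degree-$d$ maps from a monoid to an abelian group extend to its group completion.
\end{enumerate}

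\textbf{Step 3: make the construction coherent.} Finally, compositions must be handled coherently. I would do this by building $\Cat^\poly_\perf$ as an $\infty$-category and checking that the extension is natural, for example by exhibiting $\uK$ as a left Kan extension along $\Cat_\perf\to\Cat^\poly_\perf$. This requires showing that the Kan extension has the right values, using that the polynomial extension from Step 2 is unique.

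\textbf{Main obstacle.} The hardest part is the homotopy-coherent version of the polynomial-map universal property of group completion, especially compatibility with the additivity/$S_\bullet$ structure. In practice I would defer to \cite{polynomial}, which carries this out.
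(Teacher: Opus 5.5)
The paper gives no proof of this statement; it is quoted from \cite{polynomial}, so deferring to that reference is exactly what the paper does. The outline you build around the citation, however, would not work as stated, at two concrete points.

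\textbf{Step 2 produces a map out of the wrong object.} Extending a polynomial map $\iota\cC\to\uK(\cD)$ through group completion gives a map out of $(\iota\cC)^{\mathrm{gp}}$. That is the direct-sum $K$-theory of $\cC$ viewed as an additive category, not $\uK(\cC)$. The two differ already on $\pi_0$: for $\cC=\mathrm{Perf}(k)$ with $k$ a field, one gets $\bigoplus_{\ZZ}\ZZ$ versus $\ZZ$.

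To descend to $\uK(\cC)$ you must show, coherently and naturally in $P$, that the map respects the relations coming from non-split cofiber sequences. On $\pi_0$ this is the identity $[P(Y)]=[P(X\oplus Z)]$ in $K_0(\cD)$ for every cofiber sequence $X\to Y\to Z$. The polynomial group-completion principle says nothing about this.

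The universal property of Step 1 does not help directly either. It is a statement about functors and natural transformations on $\Cat_\perf$. Its standard implementation, $\uK(\cC)\simeq\Omega|\iota S_\bullet\cC|$, is unavailable here: a non-exact $P$ does not commute with cofibers, so it does not induce a map $S_\bullet\cC\to S_\bullet\cD$. What you list as the main obstacle is therefore the entire content of the theorem, not a coherence detail on top of Step 2.

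\textbf{Step 3 fails outright.} The inclusion $i\colon\Cat_\perf\to\Cat_\perf^\poly$ is wide but not full, and $\mathrm{Lan}_i\uK$ does not restrict back to $\uK$. Its value at $\cD$ is a colimit over pairs $(\cC,P\colon\cC\to\cD)$ with $P$ polynomial. A morphism $(\cC,P)\to(\cC',P')$ is an exact functor $f$ with $P'f\simeq P$.

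Exact functors preserve zero objects, so $P(0)\simeq P'(0)$ along any morphism. Hence, for $\cD\neq 0$, the pairs with $P(0)\not\simeq 0$ form components of the indexing category disjoint from $(\cD,\mathrm{id})$; constant functors, which are polynomial of degree $0$, give examples. These components contribute path components of $\mathrm{Lan}_i\uK(\cD)$ that are not hit by $\uK(\cD)$. The coherent functoriality therefore has to be built into the construction from the start; it cannot be recovered afterwards by a Kan extension along $i$.
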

\begin{rem}[{\cite[Proposition 2.15]{polynomial}}]
\label{characterize_polynomial}
    A functor between stable idempotent-complete categories is polynomial of degree $\le n$ if and only if it is $n$-excisive (in the sense of Goodwillie calculus).
\end{rem}
We now explain how \Cref{polynomial_functoriality} can be used to extend (non-additive) operations to the $K$-theory space of stable categories. We begin by recalling that the $n$th tensor power is a polynomial functor of degree $n$, as expected.
\begin{lemma}
\label{nth_power_polynomial}
    Let $\cC\in\Cat_\perf$ be symmetric monoidal. The functor $(-)^{\otimes n}:\cC\to\cC^{B\Sigma_n}$ is polynomial of degree $n$.
\end{lemma}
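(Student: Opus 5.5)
We will use \Cref{characterize_polynomial}: it suffices to show that $(-)^{\otimes n}\colon\cC\to\cC^{B\Sigma_n}$ is $n$-excisive, i.e.\ reduced in the appropriate sense with vanishing $(n+1)$st cross-effect. The forgetful functor $\cC^{B\Sigma_n}\to\cC$ is conservative, exact, and preserves finite limits and colimits. Hence a cube in $\cC^{B\Sigma_n}$ is cartesian if and only if its underlying cube in $\cC$ is. So it suffices to check $n$-excisiveness of the underlying functor $X\mapsto X^{\otimes n}$ in $\cC$.

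The key step is the cross-effect computation. Since $\otimes$ is exact in each variable separately, $X^{\otimes n}$ is the restriction to the diagonal of the multiexact functor $T(X_1,\dots,X_n)=X_1\otimes\dots\otimes X_n$. For a finite direct sum we get the expansion
\[(X_1\oplus\dots\oplus X_k)^{\otimes n}\simeq\bigoplus_{f\colon[n]\to[k]}X_{f(1)}\otimes\dots\otimes X_{f(n)},\]
natural in the $X_i$. The $k$th cross-effect $\mathrm{cr}_k(X_1,\dots,X_k)$ is the summand indexed by surjective $f$. This summand vanishes for $k>n$. In a stable category, a functor whose $(n+1)$st cross-effect vanishes is $n$-excisive, which is the standard Goodwillie characterization. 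Alternatively, one can argue by induction using that the diagonal restriction of an $n$-multiexact functor is $n$-excisive, as in Goodwillie calculus. The degree is exactly $n$ because $\mathrm{cr}_n(X,\dots,X)=\bigoplus_{\Sigma_n}X^{\otimes n}$ is nonzero in general.

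The one point needing care is compatibility with the convention of \cite[Definition~2.11]{polynomial}. There, polynomial functors are defined inductively: $F$ is of degree $\le n$ if the derivative $D_YF(X)=\mathrm{fib}(F(X\oplus Y)\to F(X))$ has degree $\le n-1$, with degree $0$ meaning constant. We will check this directly, which also gives a self-contained proof. From the expansion,
\[D_Y(X^{\otimes n})\simeq\bigoplus_{\emptyset\neq S\subseteq[n]}X^{\otimes [n]\setminus S}\otimes Y^{\otimes S}.\]
As a functor of $X$, this is a finite sum of exact functors of tensor powers of $X$ of degree $<n$. By induction on $n$, each such functor is polynomial of degree $\le n-1$, since tensoring with a fixed object is exact and preserves degree. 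Degree-$\le m$ functors are also closed under finite sums. For the $\Sigma_n$-equivariant version, the fiber is computed in $\cC^{B\Sigma_n}$, and the same conservativity argument applies.

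I expect the main obstacle to be bookkeeping rather than mathematics: making sure that the derivative and cross-effect decompositions are natural enough in $X$ and $Y$ to be identified as functors, not just objectwise. We will handle this by working with the multiexact functor $T$ and its canonical distributivity equivalences.
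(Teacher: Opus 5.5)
Your reduction to the underlying functor is fine. Finite limits and colimits in $\cC^{B\Sigma_n}$ are computed pointwise, and the forgetful functor to $\cC$ is conservative, so $n$-excisiveness can be checked on $X\mapsto X^{\otimes n}$ in $\cC$. (The paper handles the $\Sigma_n$-action differently, by taking fixed points of the multilinear functor $\cC^n\to\cC^{\otimes n}$.)

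The gap is in your key step. It is false that a functor between stable categories with vanishing $(n+1)$st cross-effect is $n$-excisive. Cross-effects only test cubes built from direct sums, while $n$-excisiveness is a condition on all strongly cocartesian cubes. Already for $n=1$ this fails. The truncation $\tau_{\ge 0}$ on $\mathrm{Perf}(\ZZ)$ (which it preserves, since $\ZZ$ is regular) preserves finite direct sums, so its second cross-effect vanishes. It is still not exact: it sends the cofiber sequence $\Sigma^{-1}\ZZ\to 0\to\ZZ$ to $0\to 0\to\ZZ$.

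Your third paragraph has the same defect. The recursion $D_YF(X)=\mathrm{fib}(F(X\oplus Y)\to F(X))$, with constant functors in degree $0$, gives $D_Y\tau_{\ge0}(X)\simeq\tau_{\ge 0}Y$, which is constant in $X$. So $\tau_{\ge 0}$ gets degree $\le 1$ under that recursion, yet it is not $1$-excisive. By \Cref{characterize_polynomial}, the notion of \cite{polynomial} agrees with $n$-excisiveness. So whatever the precise wording of \cite[Definition 2.11]{polynomial}, it is not captured by this purely additive recursion, and checking the recursion does not prove the lemma. Your expansion of $(X_1\oplus\dots\oplus X_k)^{\otimes n}$ is correct. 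But it never uses that $\otimes$ preserves cofiber sequences in each variable, and that is exactly what excisiveness requires.

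The missing idea is the one you mention only as an alternative. A functor $\cC^n\to\cD$ that is exact in each variable is $n$-excisive \cite[6.1.3]{HA}. The diagonal $\cC\to\cC^n$ preserves colimits, hence sends strongly cocartesian cubes to strongly cocartesian cubes. Therefore $X\mapsto X^{\otimes n}$ is $n$-excisive. If you make this the main argument and combine it with your conservativity reduction, you get a complete proof, resting on the same input the paper uses.
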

\begin{proof}
    The functor $(-)^{\otimes n}$ is given by the composition
    \[\cC\too (\cC^n)^{h\Sigma_n}\too(\cC^{\otimes n})^{h\Sigma_n}\too
    \cC^{B\Sigma_n}.\]
    The first and last functors are exact and thus linear. The middle functor is given by taking the fixed points of the $\Sigma_n$-action on the functor
    \[\textup{can}\colon \cC\times\ldots\times\cC\too\cC\otimes\ldots\otimes\cC.\]
    Since $\textup{can}$ is exact in each variable, it is polynomial of degree $n$ when viewed as a functor $\cC^n\to\cC^{\otimes n}$ (\cite[6.1.3]{HA}). Since polynomial functors of degree $n$ are closed under taking fixed points, it follows that the above composition is also polynomial of degree $n$.
\end{proof}
\begin{defn}
\label{beta_hat}
    Let $\cC\in\Cat_\perf$ be symmetric monoidal with $1$-finite colimits. For a finite group $G$ acting on $[n]$ and $V\in\cC^{BG}$, we define the functor
    \begin{align*}
        \widehat \beta_G^{V}\colon \cC&\too\cC\\
        X&\longmapsto (V\otimes X^{\otimes n})_{hG}.
    \end{align*}
\end{defn}

\begin{proposition}
\label{beta_polynomial}
    The functor $\widehat\beta_G^{V}$ is polynomial of degree $n$.
\end{proposition}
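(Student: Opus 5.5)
We factor $\widehat\beta_G^V$ as a composite of functors whose polynomial degrees we already control, mirroring the proof of \Cref{nth_power_polynomial}. Concretely, $\widehat\beta_G^V$ is the composition
\[\cC\xlongrightarrow{(-)^{\otimes n}}\cC^{B\Sigma_n}\xlongrightarrow{\mathrm{res}}\cC^{BG}\xlongrightarrow{V\otimes(-)}\cC^{BG}\xlongrightarrow{(-)_{hG}}\cC,\]
where the restriction is along the homomorphism $G\to\Sigma_n$ given by the action on $[n]$. The first functor is polynomial of degree $n$ by \Cref{nth_power_polynomial}. The remaining three functors are exact. Restriction is exact since limits and colimits in functor categories are pointwise. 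Tensoring with $V$ is exact because the tensor product on $\cC\in\Cat_\perf$ is exact in each variable, applied pointwise in $\cC^{BG}$. The functor $(-)_{hG}$ exists by the assumption that $\cC$ has $1$-finite colimits, and it is exact because colimits commute with finite colimits. Exact functors are polynomial of degree $\le 1$, i.e.\ linear.

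It then remains to check that post-composing a polynomial functor of degree $\le n$ with exact functors yields a polynomial functor of degree $\le n$. Here I would use \Cref{characterize_polynomial}: a functor is polynomial of degree $\le n$ iff it is $n$-excisive. The functor $(-)^{\otimes n}$ sends strongly cocartesian $(n+1)$-cubes to cartesian cubes. Exact functors between stable categories preserve cartesian (equivalently cocartesian) cubes, since these are finite limit diagrams. Hence the composite is again $n$-excisive, and so polynomial of degree $\le n$.

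The only step needing a little care is exactness of $(-)_{hG}\colon\cC^{BG}\to\cC$. Here I would note that $(-)_{hG}$ is left adjoint to the constant-diagram functor $\cC\to\cC^{BG}$, so it preserves all colimits that exist, in particular finite ones. A functor between stable categories that preserves finite colimits is exact. Alternatively, one can observe directly that colimits over $BG$ commute with pushouts in $\cC$. Neither route presents a real obstacle, so the proposition follows formally. If "degree $n$" is intended as exactly $n$ rather than $\le n$, this argument only gives the upper bound, which is all that is needed to apply \Cref{polynomial_functoriality}.
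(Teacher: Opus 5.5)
Your proposal is correct and follows the paper's proof: the same factorization through $(-)^{\otimes n}$, restriction along $BG\to B\Sigma_n$, tensoring with $V$, and $BG_!=(-)_{hG}$, with the last three functors exact and hence linear. You additionally spell out, via $n$-excisiveness, why post-composing a degree-$\le n$ functor with exact functors stays degree $\le n$; the paper leaves this step implicit.
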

\begin{proof}
    Let $i\colon BG\to B\Sigma_n$ classify the action of $G$ on $[n]$. Then the functor $\widehat\beta_G^{V}$ factors as the composition
    \[\cC\xlongrightarrow{(-)^{\otimes n}}\cC^{B\Sigma_n}\xlongrightarrow{i^*}\cC^{BG}\xlongrightarrow{V\otimes(-)}\cC^{BG}\xlongrightarrow{BG_!}\cC. \]
    All functors except the first are exact and thus linear, and the first is polynomial of degree $n$ (\Cref{nth_power_polynomial}).
\end{proof}
By \Cref{polynomial_functoriality}, we can apply $K$-theory to the polynomial functors $\widehat\beta_G^V$ to obtain
\[\mdef{\widehat\beta_G^V:\uK(\cC)\too\uK(\cC)}.\]
We will relate these operations to the semiadditive power operations of \Cref{semiadditive_power_operations}. The first main ingredient will be a description of the ring structure on $K(\cC)$ in terms of polynomial functoriality.
\subsubsection{Multiplicative Structures on Algebraic $K$-Theory}
     We begin by giving a construction of the lax symmetric monoidal structure on algebraic $K$-theory using \Cref{polynomial_functoriality}. We work more generally in the setting of a product-preserving functor
     \[T\colon\Cat_{\perf}\too\Spc.\]
     For motivation, consider  $\cC,\cD\in\Cat_\perf$. The functor
    $\cC\times\cD\to\cC\otimes\cD$ is polynomial of degree $2$, and if we could evaluate $T$ on polynomial functors this would give a map $T(\cC)\times T(\cD)\to T(\cC\otimes\cD)$. We show that such functoriality indeed assembles into a lax symmetric monoidal structure on $T$:
\begin{theorem}
\label{lax_sm}
    Let $T:\Cat_{\perf}\to\Spc$ be product-preserving. Then an extension
\[\begin{tikzcd}[ampersand replacement=\&]
	{\Cat_{\perf}} \& \Spc \\
	{\Cat_{\perf}^\poly}
	\arrow["T", from=1-1, to=1-2]
	\arrow[from=1-1, to=2-1]
	\arrow[dashed, from=2-1, to=1-2]
\end{tikzcd}\]
     gives $T$ the structure of a lax symmetric monoidal functor.
\end{theorem}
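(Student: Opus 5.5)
The plan is to package the polynomial functoriality into an operad-level statement. Write $\Cat_\perf^{\otimes}$ for the symmetric monoidal structure given by the Lurie tensor product. I will build a category $(\Cat_\perf^\poly)^{\times}$ out of multi-polynomial functors: its multimorphisms $(\cC_1,\dots,\cC_k)\to\cD$ are the polynomial functors $\prod_i\cC_i\to\cD$. The key observation is that the canonical functor
\[\prod_i\cC_i\to\bigotimes_i\cC_i\]
is polynomial of degree $k$, being exact in each variable (\cite[6.1.3]{HA}, as in \Cref{nth_power_polynomial}). This yields a map of $\infty$-operads from $\Cat_\perf^{\otimes}$ to the cartesian structure $(\Cat_\perf^\poly)^{\times}$. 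Here I use that $\Cat_\perf^\poly$ has finite products computed as in $\Cat_\perf$, with the product $\cC\times\cD$ and its projections. I will also use that a functor out of a product is polynomial iff it is so jointly, which I take from \cite{polynomial}.

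\textbf{Step 1.} Show that $\Cat_\perf\to\Cat_\perf^\poly$ preserves finite products. Then check that the given extension $\uT\colon\Cat_\perf^\poly\to\Spc$ still preserves finite products. This holds because products in $\Cat_\perf^\poly$ are the same categories, with the same projections, which are exact, and $T$ preserves products by assumption. A product-preserving functor into $\Spc$ is the same as a symmetric monoidal functor for the cartesian structures, so $\uT$ upgrades to a symmetric monoidal functor $(\Cat_\perf^\poly)^\times\to\Spc^\times$.

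\textbf{Step 2.} Construct a lax symmetric monoidal functor $\Cat_\perf^{\otimes}\to(\Cat_\perf^\poly)^\times$ that is the identity on objects. The structure maps are the canonical functors $\prod\cC_i\to\bigotimes\cC_i$. I plan to get it as follows. The tensor product on $\Cat_\perf$ corepresents multi-exact functors, so $\Cat_\perf^\otimes$ is the symmetric monoidal envelope of the operad of multi-exact functors. Inside the operad whose multimorphisms are multi-polynomial functors out of products, the multi-exact functors form a suboperad. That operad is in turn the cartesian operad of $\Cat_\perf^\poly$.

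The resulting lax structure is the composite of the lax symmetric monoidal functor of Step 2 with the symmetric monoidal functor $\uT$ of Step 1. Its restriction to underlying functors recovers $T$.

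The main obstacle is Step 2: making this rigorous at the level of $\infty$-operads rather than merely pointwise. The task is to identify the cartesian operad $(\Cat_\perf^\poly)^\times$, and this requires checking that multi-polynomial functors compose correctly as an $\infty$-operad. I would first try to model $(\Cat_\perf^\poly)^\times$ directly, via a cartesian-structure construction on the category of polynomial functors (\cite[2.4.1]{HA}). Then I would exhibit the map from the operad underlying $\Cat_\perf^\otimes$ by hand: on multimorphisms, regard a multi-exact functor as a polynomial functor from the product. If that is technically awkward, the fallback is to use the universal property of $\Cat_\perf^\otimes$ as the localization of the multi-exact-functor operad, and to reduce to verifying that multi-exact functors are polynomial and are closed under composition in the polynomial world.
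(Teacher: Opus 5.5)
Your argument is correct, and it is essentially the paper's argument in different packaging. A map of $\infty$-operads $\Cat_\perf^\otimes\to(\Cat_\perf^\poly)^\times$ into a cartesian structure is the same datum as a product-preserving functor $\Cat_\perf^\otimes\to\Cat_\perf^\poly$ out of the total category (\Cref{cartesian_sm}). The paper works in this second form, and doing so dissolves what you call the main obstacle.

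The paper does not build the operad of multi-polynomial functors and then map into it. Instead it starts from the lax symmetric monoidal inclusion $\Cat_\perf\subset\Cat$, with $\Cat$ carrying its cartesian structure. This inclusion already supplies all the coherences, in the form of a product-preserving functor $\Cat_\perf^\otimes\to\Cat$. That functor sends inert edges to projections and active edges to the multilinear functors $\prod\cC_i\to\bigotimes\cC_i$.

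Since $\Cat_\perf^\poly\subset\Cat$ is a (non-full) subcategory, factoring through it is a property, checked on morphisms: projections are exact, and multilinear functors are polynomial. The factorization is automatically product-preserving because $\Cat_\perf^\poly\to\Cat$ creates products. Your ``suboperad'' fallback is exactly this check phrased over $\Fin_*$, since both operads sit as subcategories of $\Cat^\times$. So nothing has to be built by hand. In particular, you need no separate check that multi-polynomial functors compose coherently: the cartesian operad of $\Cat_\perf^\poly$ exists as soon as it has finite products, which is your Step 1.

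Two smaller remarks. First, you do not need any envelope or localization universal property. A lax symmetric monoidal functor out of $\Cat_\perf$ is by definition an operad map out of $\Cat_\perf^\otimes$ itself. Also, $\Cat_\perf^\otimes$ is not the symmetric monoidal envelope of the multi-exact operad; it \emph{is} that operad, which happens to be corepresentable. Second, your Step 1 checks explicitly that the extension to $\Cat_\perf^\poly$ still preserves finite products. The paper uses this silently when it post-composes with $T$, so your version is slightly more complete on that point.
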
   

We first recall some facts about lax symmetric monoidal functors. For a symmetric monoidal category $\cU$ encoded as a product-preserving functor
\[\cU\colon\Span(\Fin)\too\Cat,\] 
denote the corresponding cocartesian fibration by
\[\cU^\otimes\too \Span(\Fin).\]
When the symmetric monoidal structure on $\cU$ is cartesian, we denote $\cU^\otimes$ by $\cU^\times$. A lax symmetric monoidal functor from $\cU$ to $\cV$ is a triangle 
\[\begin{tikzcd}[ampersand replacement=\&]
	{\cU^\otimes} \&\& {\cV^\otimes} \\
	\& {\Span(\Fin)}
	\arrow[from=1-1, to=1-3]
	\arrow[from=1-1, to=2-2]
	\arrow[from=1-3, to=2-2]
\end{tikzcd}\]
    such that the functor $\cU^\otimes\to\cV^\otimes$ preserves inert edges.

In the case that the symmetric monoidal structure on $\cV$ is cartesian, there is a simple description of lax symmetric monoidal functors into $\cV$.  
\begin{proposition}
\label{cartesian_sm}
    Suppose $\cU$ is a symmetric monoidal category and $\cV$ is a cartesian symmetric monoidal category. There is a functor $\cV^\times\to\cV$ such that postcomposition induces an equivalence
    \[\Fun^{\otimes\textup{-lax}}(\cU,\cV)\simeq\Fun^\times(\cU^\otimes,\cV),\]
    where $\Fun^\times(-,-)\subset\Fun(-,-)$ denotes the full subcategory of finite-product-preserving functors.
\end{proposition}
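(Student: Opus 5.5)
The plan is to identify both sides with functors out of the cartesian operator category and to use the universal property of $\cV^\times$ as the cartesian product. Recall that a symmetric monoidal $\cV$ with cartesian structure has $\cV^\times\to\Span(\Fin)$. Over $I\in\Fin$ its fiber is $\cV^I$. The required functor $\cV^\times\to\cV$ sends an object $(I,(v_i)_{i\in I})$ to $\prod_{i\in I}v_i$. We construct it concretely via the cartesian structure, in the spirit of Lurie's \emph{cartesian structures} (\cite[Section 2.4.1]{HA}): there one has a functor $\pi\colon\cV^\times\to\cV$, and $\cV^\times$ is characterized as the subcategory of $\Fun$-data $\pi$ that are \emph{lax cartesian structures}. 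Concretely, a morphism $(I,v)\to(J,w)$ over a span $I\leftarrow K\to J$ is sent to the composite $\prod_I v_i\to\prod_K v_{k}\to\prod_J w_j$. Here the first map restricts along $K\to I$, and the second is the product over $j$ of the fiber maps $\prod_{k\mapsto j}v_k\to w_j$.

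Given this, postcomposition with $\pi$ sends a lax symmetric monoidal functor $F^\otimes\colon\cU^\otimes\to\cV^\times$ (preserving inerts) to $\pi\circ F^\otimes\colon\cU^\otimes\to\cV$. First I check that this preserves finite products. Products in $\cU^\otimes$ are computed over the product in $\Span(\Fin)$, which is disjoint union. Concretely, $(I,u)\times(J,u')=(I\sqcup J,(u,u'))$ with inert projections. $F^\otimes$ preserves inerts and the fiber decomposition. Then $\pi$ turns the fiber decomposition over $I\sqcup J$ into $\prod_I\times\prod_J$, so the composite preserves the product. The empty set (terminal in $\Span(\Fin)$) goes to the empty product, i.e.\ the terminal object of $\cV$.

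For the equivalence, I invoke the universal property \cite[Proposition 2.4.1.7]{HA}. Adapted to the span model, it states that for any category $\cU^\otimes$ over $\Span(\Fin)$, composition with $\pi$ gives an equivalence between functors $\cU^\otimes\to\cV^\times$ over $\Span(\Fin)$ and \emph{lax cartesian structures} $\cU^\otimes\to\cV$. These are functors $G$ such that for each $(I,u)$, the inert maps to $(\{i\},u_i)$ exhibit $G(I,u)$ as $\prod_i G(\{i\},u_i)$. It remains to match the two conditions on each side. Inert-preservation of $F^\otimes$ corresponds exactly to this Segal-type product condition on $G$. In turn, for $\cU^\otimes$ a cocartesian fibration satisfying the Segal condition, $(I,u)$ is itself the product of the $(\{i\},u_i)$ via inerts. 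Hence the Segal condition is equivalent to $G$ preserving finite products. This gives the claimed equivalence $\Fun^{\otimes\text{-lax}}(\cU,\cV)\simeq\Fun^\times(\cU^\otimes,\cV)$.

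The main obstacle is transporting Lurie's statement, which is phrased for $\Fin_*$ with the operadic model, to the span model $\Span(\Fin)$ used here. I would handle this by invoking the equivalence between $\infty$-operads over $\Fin_*$ and the span description (via the inclusion $\Fin_*\to\Span(\Fin)$, active/inert factorization, and Barwick's unfurling). Alternatively, I would verify directly that $\cV^\times\to\Span(\Fin)$ is the cocartesian unstraightening of $I\mapsto\cV^I$. In that case $\pi$ is the right Kan extension along the inclusion of the fiber over $\pt$, and the universal property follows from the pointwise formula for that right Kan extension.
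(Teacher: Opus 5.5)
Your argument is essentially the paper's proof. The paper also cites \cite[2.4.1]{HA}, translated to operads over $\Span(\Fin)$, together with the characterization of $\mathcal O$-monoids as finite-product-preserving functors out of $\cU^\otimes$, which it takes from \cite[Proposition 2.3.3]{haugseng2023analyticmonadsinftyoperadslawvere}. You instead sketch that characterization directly via the decomposition $(I,u)\simeq\prod_i(\{i\},u_i)$ in $\cU^\otimes$.

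Two corrections are needed:
\begin{enumerate}
\item The result you cite from \cite[2.4.1]{HA} gives an equivalence only between \emph{inert-preserving} functors $\cU^\otimes\to\cV^\times$ over the base and lax cartesian structures. It does not apply to all functors over $\Span(\Fin)$, so state it that way from the start.
\item Your fallback claim that $\pi$ is the right Kan extension along the fiber inclusion $\cV\hookrightarrow\cV^\times$ is false. That inclusion is not fully faithful, so the pointwise formula at $(\pt,v)$ sees maps over every span $\pt\leftarrow K\to\pt$. It therefore computes a limit of the $v^{K}$ over the groupoid of finite sets, not $v$. Rely on the transport route instead.
\end{enumerate}
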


\begin{proof}This is by \cite[2.4.1]{HA} (translated to the setting of operads over $\Span(\Fin)$) and the characterization of $\mathcal O$-monoids from \cite[Proposition 2.3.3]{haugseng2023analyticmonadsinftyoperadslawvere}.
\end{proof}
\begin{rem}
   The functor
$\cV^\times\too\cV$ is given on objects by $(X_1,\ldots,X_n)\mapsto X_1\times\ldots\times X_n$.
\end{rem}
\begin{proof}[Proof of \Cref{lax_sm}]
    The inclusion $\Cat_\perf\subset\Cat$ is lax symmetric monoidal, where $\Cat$ is equipped with its cartesian symmetric monoidal structure. Under the equivalence of \Cref{cartesian_sm}, this inclusion corresponds to a product-preserving functor
    \[\Cat_\perf^\otimes\too\Cat.\]
    The image of $(\cC_1,\ldots,\cC_n)$ is $\cC_1\times\ldots\times\cC_n$, and the image of active edge $(\cC_1,\ldots,\cC_n)\to\cC_1\otimes\ldots\otimes\cC_n$ is the multilinear functor \[\cC_1\times\ldots\times\cC_n\to\cC_1\otimes\ldots\otimes\cC_n\qin\Cat.\]
    Since multilinear functors are polynomial (\cite[6.1.3]{HA}), the functor $\Cat_\perf^\otimes\to\Cat$ factors through the (non-full) subcategory $\Cat_\perf^\poly\subset\Cat$ to give a unique lift
\[\begin{tikzcd}
	& {\Cat_\perf^\poly} \\
	{\Cat_\perf^\otimes} & \Cat.
	\arrow[from=1-2, to=2-2]
	\arrow[dashed, from=2-1, to=1-2]
	\arrow[from=2-1, to=2-2]
\end{tikzcd}\]
The functor $\Cat^\poly_\perf\to\Cat$ creates products, so the lift $\Cat_\perf^\otimes\to\Cat^\poly_\perf$ is product-preserving. Then by postcomposing with $T$ (using polynomial functoriality), we obtain a product-preserving functor
\[\Cat^\otimes_\perf\too\Cat_\perf^\poly\too \Spc,\]
giving a lax symmetric monoidal structure on $T$.
\end{proof}
We next lift the lax symmetric monoidal structure on $T\colon \Cat_{\perf}\to \Spc$ to a lax symmetric monoidal structure on $\widetilde T\colon \Cat_{\perf}\to\CMon(\Spc)$.

Let $\cV\in\CAlg(\Pr)$. Then $\CMon(\cV)\simeq\CMon(\Spc)\otimes\cV$ is equipped with a symmetric monoidal structure such that the functor $\CMon(\cV)\to\cV$ is lax symmetric monoidal.
\begin{proposition}
\label{unique_lift}
    Suppose $\cU\in\CMon(\Cat)$ is semiadditive and $\cV\in\CAlg(\Pr)$. Then a product-preserving functor $T\colon \cU\to \cV$ admits a unique lift
\[\begin{tikzcd}[ampersand replacement=\&,sep=large]
	\& {\CMon(\cV)} \\
	\cU \& \cV
	\arrow[from=1-2, to=2-2]
	\arrow["{\widetilde T}", dashed, from=2-1, to=1-2]
	\arrow["T"', from=2-1, to=2-2]
\end{tikzcd}\]
Furthermore, a lax symmetric monoidal structure on $T$ lifts uniquely to a lax symmetric monoidal structure on $\widetilde T$.
\end{proposition}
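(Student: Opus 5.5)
The plan is to reduce both claims to the universal property of $\CMon(-)$ for product-preserving functors out of semiadditive categories. The basic input is standard: since $\cU$ is semiadditive, every object of $\cU$ is canonically a commutative monoid, so the forgetful functor $\CMon(\cU)\to\cU$ is an equivalence. The same argument, applied to functors, shows that postcomposition with $\CMon(\cV)\to\cV$ induces an equivalence
\[\Fun^\times(\cU,\CMon(\cV))\iso\Fun^\times(\cU,\cV).\]
Concretely, a product-preserving $T$ sends the canonical commutative monoid structure of each object of $\cU$, namely the functor $\Span(\Fin)\to\cU$, to a commutative monoid in $\cV$. Equivalently, $\Fun^\times(\cU,\CMon(\cV))\simeq\CMon(\Fun^\times(\cU,\cV))$, and $\Fun^\times(\cU,\cV)$ is itself semiadditive whenever $\cU$ is, because pointwise sums can be expressed through the biproducts of $\cU$. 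Since $\CMon$ of a semiadditive category is itself, this yields the existence and uniqueness of $\widetilde T$.

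For the lax symmetric monoidal statement, I would use \Cref{cartesian_sm}-style reasoning, but carried out in the $\cV$-enriched form. A lax symmetric monoidal functor $\cU\to\cV$ is a map of operads $\cU^\otimes\to\cV^\otimes$ over $\Span(\Fin)$ preserving inerts. Similarly, a lax symmetric monoidal structure on $\widetilde T$ is a map $\cU^\otimes\to\CMon(\cV)^\otimes$. The key step is to identify $\CMon(\cV)^\otimes$ over $\Span(\Fin)$ as the operad of commutative monoids in $\cV^\otimes$. Here I would use that $\CMon(\cV)\simeq\CMon(\Spc)\otimes\cV$ is the semiadditivization, with the symmetric monoidal structure uniquely characterized by requiring $\CMon(\cV)\to\cV$ to be lax symmetric monoidal and $\cV\to\CMon(\cV)$ (the free functor) to be symmetric monoidal. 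That characterization gives
\[\Fun^{\otimes\textup{-lax}}(\cU,\CMon(\cV))\simeq\Fun^{\otimes\textup{-lax}}(\cU,\cV)\times_{\Fun(\cU,\cV)}\Fun(\cU,\CMon(\cV)),\]
at least for semiadditive $\cU$. To prove this, I would note that lax symmetric monoidal functors are commutative algebras in $\Fun(\cU,-)$ with the Day convolution structure. The forgetful map then becomes the map $\CAlg(\Fun(\cU,\CMon\cV)_{\mathrm{Day}})\to\CAlg(\Fun(\cU,\cV)_{\mathrm{Day}})$. Restricted to product-preserving functors from a semiadditive $\cU$, this map is an equivalence, because $\Fun^\times(\cU,\CMon(\cV))\to\Fun^\times(\cU,\cV)$ is a symmetric monoidal equivalence for the localized Day convolution. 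Symmetric monoidality holds because Day convolution is computed by left Kan extension along $\oplus$, which is compatible with the semiadditive localization.

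The main obstacle is this last compatibility. One has to make sure that the relevant Day convolution structure on product-preserving functors is well defined: it should arise as a localization of Day convolution on all functors, and the equivalence of the first paragraph should be symmetric monoidal for it. I would first try to exhibit $\Fun^\times(\cU,\cV)$ as $\Fun^\times(\cU,\Spc)\otimes\cV$ in $\Pr$ (after passing to presentable envelopes of $\cU$ if needed), with $\cU$ small. Then $\CMon(-)=\CMon(\Spc)\otimes-$ commutes with the tensor product, and idempotence of $\CMon(\Spc)$ in $\CAlg(\Pr)$ gives the equivalence symmetric monoidally. Uniqueness of the lift then follows because algebras in equivalent symmetric monoidal categories correspond.
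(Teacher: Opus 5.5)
\paragraph{Where the argument fails.} You located the crux correctly, but the reason you give for it is wrong, and the claim cannot be proved as stated.

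Day convolution on $\Fun(\cU,\cV)$ is left Kan extension along the given tensor product $\otimes$ of $\cU$ (for $\cU=\Cat_\perf$, the Lurie tensor product), not along the biproduct $\oplus$. The localization onto product-preserving functors is compatible with Day convolution exactly when $\otimes$ preserves finite products in each variable. Checking on corepresentables, compatibility amounts to $c\otimes(a\oplus b)\simeq(c\otimes a)\oplus(c\otimes b)$ and $c\otimes 0\simeq 0$. Nothing in the hypotheses provides this. For $\otimes=\oplus$, which your sentence suggests, it fails, and so does the proposition itself.

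Here is a counterexample. Take $\cU$ to be the category of finitely generated free abelian groups with its cartesian structure $\oplus$, take $\cV=\Spc$, and let $T$ be the underlying-set functor. $T$ preserves products, so it is symmetric monoidal with structure map $T(M)\times T(N)\to T(M\oplus N)$, $(m,n)\mapsto(m,n)$. The lift $\widetilde T(M)$ is $M$ as a discrete commutative monoid. A lift of this structure to $\widetilde T$ would factor $(m,n)\mapsto(m,n)$ through the universal bilinear map $T(M)\times T(N)\to U(\widetilde T(M)\otimes\widetilde T(N))$, followed by $U$ of a map of commutative monoids, where $U\colon\CMon(\cV)\to\cV$ is the forgetful functor. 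That would force $(0,n)\mapsto 0$, which is false.

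In the terms of your argument: $\Fun^\times(\cU,\CMon(\cV))\to\Fun^\times(\cU,\cV)$ is an equivalence of categories. It is not an equivalence of the induced suboperads of Day convolution, because natural bilinear maps $M\times N\to M\oplus N$ vanish while natural maps of sets need not. Your displayed fiber-product formula is also false before you restrict to product-preserving functors. For $\cU=0$ it would identify $\CAlg(\CMon(\cV))$ with $\CAlg(\cV)\times_{\cV}\CMon(\cV)$.

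\paragraph{The fix, and comparison with the paper.} The missing hypothesis is that $\otimes$ on $\cU$ preserves finite products in each variable. This holds for $\Cat_\perf$, the only case the paper uses. Under this hypothesis your route works:
\begin{itemize}
\item The localized Day convolution exists.
\item $\Fun^\times(\cU,\cV)\simeq\Fun^\times(\cU,\Spc)\otimes\cV$ is semiadditive, hence a module over the idempotent algebra $\CMon(\Spc)$.
\item So the map induced by the unit $\cV\to\CMon(\cV)$ is a symmetric monoidal equivalence onto $\Fun^\times(\cU,\CMon(\cV))$. Its inverse is its right adjoint, namely postcomposition with $U$.
\item Passing to $\CAlg$ then gives existence and uniqueness of the lifted lax structure.
\end{itemize}

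The paper's proof has the same overall shape: the universal property of $\CMon$, lax monoidality of postcomposition with $U$ on Day convolution, then $\CAlg$ of a square with fully faithful vertical maps. However, it only establishes that the top arrow of its square is an equivalence of underlying categories. It does not address multimorphisms, and the example above shows this is exactly where distributivity is needed. Once the hypothesis is added, your idempotence argument supplies the strong monoidality that the paper leaves implicit.
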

\begin{proof}
    The first part of the proposition is the universal property of commutative monoids (\cite[Corollary 5.15]{spans}). It says that the forgetful functor $\CMon(\cV)\to\cV$ induces an equivalence
    \[\Fun^\times(\cU,\CMon(\cV))\iso \Fun^\times(\cU,\cV).\]
     
    Postcomposition with the lax symmetric monoidal functor $\CMon(\cV)\to\cV$ induces a lax symmetric monoidal functor on functor categories with respect to the Day convolution (\cite[Proposition 3.3]{bms}). Therefore there is a commutative square
\[\begin{tikzcd}
	{\Fun^\times(\cU,\CMon(\cV))} & {\Fun^\times(\cU,\cV)} \\
	{\Fun(\cU,\CMon(\cV))} & {\Fun(\cU,\cV)}
	\arrow["\sim", from=1-1, to=1-2]
	\arrow[hook', from=1-1, to=2-1]
	\arrow[hook', from=1-2, to=2-2]
	\arrow[from=2-1, to=2-2]
\end{tikzcd}\]
of lax symmetric monoidal functors 
in which the vertical arrows are fully faithful. Passing to commutative algebras (and the identification of commutative algebras in the Day convolution with lax symmetric monoidal functors) gives a square
\[\begin{tikzcd}
	{\Fun^{\times,\otimes\textup{-lax}}(\cU,\CMon(\cV))} & {\Fun^{\times,\otimes\textup{-lax}}(\cU,\cV)} \\
	{\Fun^{\otimes\textup{-lax}}(\cU,\CMon(\cV))} & {\Fun^{\otimes\textup{-lax}}(\cU,\cV).}
	\arrow["\sim", from=1-1, to=1-2]
	\arrow[hook', from=1-1, to=2-1]
	\arrow[hook', from=1-2, to=2-2]
	\arrow[from=2-1, to=2-2]
\end{tikzcd}\]
Given a product-preserving lax symmetric monoidal functor $\cU\to\cV$, it follows that it admits a unique lift to a product-preserving lax symmetric monoidal functor $\cU\to\CMon(\cV)$.
\end{proof}
Applying the above to the functor $\uK\colon\Cat_\perf\to\Spc$, we obtain a lax symmetric monoidal structure on $K\colon\Cat_\perf\to\CMon(\Spc)$. Since the lax symmetric monoidal structure on $K$-theory is unique (\cite[Corollary 1.6]{blumberg2015uniquenessmultiplicativecyclotomictrace}), we learn the following:
    
\begin{cor}
    \label{unique_equivariance} Let $\cC\in\Cat_\perf$.   The image of the $\Sigma_n$-equivariant multilinear functor
\[\cC\times\ldots\times\cC\too\cC\otimes\ldots\otimes \cC\] under $\uK$ (using polynomial functoriality) is the $\Sigma_n$-equivariant map
\[\uK(\cC)\times\ldots\times\uK(\cC)\too\uK(\cC\otimes\ldots\otimes\cC)\]
provided by the unique lax symmetric monoidal structure on $\uK$. 
\end{cor}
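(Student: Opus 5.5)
The plan is to unwind the construction of the lax symmetric monoidal structure in the proof of \Cref{lax_sm}, specialized to $T=\uK$, and check that the structure map it produces on the active edge $(\cC,\ldots,\cC)\to\cC^{\otimes n}$ is, by construction, $\uK$ applied to the multilinear functor. Equivariance then comes for free from functoriality of $\Span(\Fin)$-indexed diagrams. After that, \Cref{unique_lift} and the uniqueness theorem of \cite{blumberg2015uniquenessmultiplicativecyclotomictrace} let us identify this structure with the unique one.

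First, I unwind the construction. The product-preserving functor $\Cat_\perf^\otimes\to\Cat_\perf^\poly\xrightarrow{\uK}\Spc$ corresponds, under \Cref{cartesian_sm}, to the lax symmetric monoidal structure on $\uK$. In this dictionary, the structure map $\uK(\cC)^{\times n}\to\uK(\cC^{\otimes n})$ is the image of the active edge $(\cC,\ldots,\cC)\to\cC^{\otimes n}$ lying over $[n]\to[1]$. Since the functor is product-preserving, its value on the object $(\cC,\ldots,\cC)$ is $\uK(\cC\times\cdots\times\cC)\simeq\uK(\cC)^{\times n}$. By construction, the active edge is sent to the multilinear functor $\cC^{ n}\to\cC^{\otimes n}$ viewed in $\Cat_\perf^\poly$, so its image is $\uK$ of that polynomial functor.

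Second, I handle the $\Sigma_n$-equivariance. The symmetric group acts on $(\cC,\ldots,\cC)\in\Cat_\perf^\otimes$ through automorphisms of $[n]$ over $[1]$, and the active edge is $\Sigma_n$-invariant. Hence the functor $\Cat_\perf^\otimes\to\Cat_\perf^\poly$ sends this $\Sigma_n$-diagram, viewed as a functor out of $B\Sigma_n$ into the relevant slice, to the $\Sigma_n$-equivariant multilinear functor. Applying $\uK$ gives the same equivariant map in both descriptions.

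Third, I compare with the unique structure. By \Cref{unique_lift}, the structure just constructed lifts uniquely to a lax symmetric monoidal structure on $K\colon\Cat_\perf\to\CMon(\Spc)$. Because $\uK$ is grouplike-valued, this further refines to connective spectra. By \cite[Corollary 1.6]{blumberg2015uniquenessmultiplicativecyclotomictrace}, it agrees with the standard lax symmetric monoidal structure. Applying the forgetful functor to spaces, the equivariant structure maps agree.

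The main obstacle I expect is this last comparison, since uniqueness in \cite{blumberg2015uniquenessmultiplicativecyclotomictrace} is stated for spectrum-valued $K$-theory. I would need to check two things. First, that the refinement from $\CMon(\Spc)$ to grouplike objects, i.e. connective spectra, is compatible with lax structures; this is plausible because the inclusion of grouplike monoids is a symmetric monoidal localization. Second, that the equivalence of lax structures respects the $\Sigma_n$-equivariant data. For the second point, I would use that an equivalence of lax symmetric monoidal functors is an equivalence of functors out of $\Cat_\perf^\otimes$, which automatically includes the $\Sigma_n$-homotopies.
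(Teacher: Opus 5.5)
Your proposal is correct and follows the paper's own argument. The structure maps of the lax structure built in \Cref{lax_sm} are, by construction, $\uK$ applied to the $\Sigma_n$-equivariant multilinear functors; \Cref{unique_lift} together with the uniqueness result of \cite{blumberg2015uniquenessmultiplicativecyclotomictrace} then identifies this structure with the standard one. Your extra care about refining from $\CMon(\Spc)$ to grouplike objects (connective spectra), and about the $\Sigma_n$-homotopies, makes explicit steps the paper leaves implicit.
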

When $\cC\in\Cat_\perf$ is symmetric monoidal, $K(\cC)$ is a commutative algebra. We now use \Cref{unique_equivariance} to describe the total power operation on $K(\cC)$.
\begin{proposition}
\label{power_op_polynomial}
    Let $\cC\in\Cat_\perf$ be symmetric monoidal, so that $K(\cC)$ is a commutative algebra. Then there is a commutative triangle
\[\begin{tikzcd}[ampersand replacement=\&, column sep=huge]
	{\uK(\cC)} \& {\uK(\cC^{B\Sigma_n})} \\
	\& {\uK(\cC)^{B\Sigma_n}}
	\arrow["{\uK((-)^{\otimes n})}", from=1-1, to=1-2]
	\arrow["{\mathcal P_n}"', from=1-1, to=2-2]
	\arrow[from=1-2, to=2-2]
\end{tikzcd}\]
where the vertical map is the assembly map for the constant $B\Sigma_n$-limit.
\end{proposition}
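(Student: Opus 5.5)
The plan is to unwind the definition of $\mathcal P_n$ for $R=K(\cC)$ and match each piece with a piece of the polynomial functor $(-)^{\otimes n}$. By definition, $\mathcal P_n$ is the composite
\[\uK(\cC)\xrightarrow{\Delta}\big(\uK(\cC)^{\times n}\big)^{h\Sigma_n}\xrightarrow{\mu_n}\uK(\cC^{\otimes n})^{h\Sigma_n}\to \uK(\cC)^{B\Sigma_n}.\]
Here the first map is the diagonal, viewed as a $\Sigma_n$-equivariant map of spaces. In the second map, $\mu_n$ is the $\Sigma_n$-equivariant lax structure map of $\uK$. The last map is induced by the $\Sigma_n$-equivariant multiplication $\cC^{\otimes n}\to\cC$, which is exact. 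Note that the underlying space of $K(\cC)$ is $\uK(\cC)$, and the $n$-fold multiplication on $K(\cC)$ comes from the lax symmetric monoidal structure composed with $\uK$ of the tensor functor.

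On the other side, \Cref{nth_power_polynomial} factors $(-)^{\otimes n}$ as
\[\cC\xrightarrow{\Delta}(\cC^n)^{h\Sigma_n}\xrightarrow{\textup{can}^{h\Sigma_n}}(\cC^{\otimes n})^{h\Sigma_n}\to\cC^{B\Sigma_n}.\]
Each stage of this factorization corresponds to a stage of $\mathcal P_n$, provided we can commute $\uK$ past the homotopy fixed points.

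The key step is a naturality statement for homotopy fixed points. For a $\Sigma_n$-equivariant polynomial functor $F\colon\cA\to\cB$, that is, a functor $B\Sigma_n\to\Cat_\perf^\poly$, we want $\uK(F^{h\Sigma_n})$ to be compatible with $\uK(F)^{h\Sigma_n}$ under the assembly maps $\uK(\cA^{h\Sigma_n})\to\uK(\cA)^{h\Sigma_n}$. This holds because assembly is defined for any functor $\uK$ out of $\Cat_\perf^\poly$ whenever the limit $\cA^{h\Sigma_n}$ exists in $\Cat_\perf^\poly$ and maps to it. Concretely, applying $\uK$ to the cone $\cA^{h\Sigma_n}\to\cA$ in $\Fun(B\Sigma_n,\Cat_\perf^\poly)$ gives a natural map to the limit. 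We then show that $\Delta\colon\cC\to(\cC^n)^{h\Sigma_n}$ composed with assembly is the diagonal. This follows since $\uK$ preserves products, so $\uK(\cC^n)\simeq\uK(\cC)^n$ equivariantly, and since the diagonal $\cC\to\cC^n$ is exact and equivariant.

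Next, \Cref{unique_equivariance} identifies $\uK(\textup{can})$ with $\mu_n$ as a $\Sigma_n$-equivariant map. Taking homotopy fixed points and using the naturality of assembly gives a commuting square between $\uK(\textup{can}^{h\Sigma_n})$ and $\mu_n^{h\Sigma_n}$. For the last map, $(\cC^{\otimes n})^{h\Sigma_n}\to\cC^{B\Sigma_n}$ is exact, and the compatibility of $\uK$ applied to it with assembly to $\uK(\cC)^{B\Sigma_n}$ again follows from naturality of assembly. This uses that the multiplication $\cC^{\otimes n}\to\cC$ is equivariant.

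Pasting the three squares yields the triangle of the statement. The main obstacle is the middle step: \Cref{unique_equivariance} must really provide the full $\Sigma_n$-equivariant (coherent) identification rather than just an identification of underlying maps. We also need the passage to homotopy fixed points to be carried out inside $\Cat_\perf^\poly$, so that $\textup{can}^{h\Sigma_n}$ is the image of the equivariant diagram under $\uK$. I would handle this by working with functors $B\Sigma_n\to\Cat_\perf^\otimes\to\Cat_\perf^\poly$ extracted from the proof of \Cref{lax_sm}, where equivariance is built in.
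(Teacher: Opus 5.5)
Your proposal is correct and follows essentially the same route as the paper. Both factor $(-)^{\otimes n}$ as in \Cref{nth_power_polynomial} and compare it stage by stage with the unwound $\mathcal P_n$ via assembly maps: naturality of assembly along $\uK\colon\Cat_\perf^\poly\to\Spc$ handles the outer squares, and the middle square follows from the $\Sigma_n$-equivariant identification of \Cref{unique_equivariance} before taking fixed points.
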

\begin{proof}
    There is a diagram
\[\begin{tikzcd}[ampersand replacement=\&, column sep=large]
	{\uK(\cC)} \& {\uK((\cC^n)^{h\Sigma_n})} \& {\uK((\cC^{\otimes n})^{h\Sigma_n})} \& {\uK(\cC^{B\Sigma_n})} \\
	{\uK(\cC)} \& {(\uK(\cC)^n)^{h\Sigma_n}} \& {\uK(\cC^{\otimes n})^{h\Sigma_n}} \& {\uK(\cC)^{B\Sigma_n}}
	\arrow[from=1-1, to=1-2]
	\arrow["{\uK((-)^{\otimes n})}", curve={height=-24pt}, from=1-1, to=1-4]
	\arrow[equals, from=1-1, to=2-1]
	\arrow[from=1-2, to=1-3]
	\arrow[from=1-2, to=2-2]
	\arrow[from=1-3, to=1-4]
	\arrow[from=1-3, to=2-3]
	\arrow[from=1-4, to=2-4]
	\arrow[from=2-1, to=2-2]
	\arrow["{\mathcal P_n}"', curve={height=24pt}, from=2-1, to=2-4]
	\arrow[from=2-2, to=2-3]
	\arrow[from=2-3, to=2-4]
\end{tikzcd}\]
in which the vertical arrows are all assembly maps, and the bottom middle map is given by the lax symmetric monoidal structure on $\uK$. The left and right squares commute by naturality of the assembly maps. The middle square commutes by the rectangle
\[\begin{tikzcd}[ampersand replacement=\&]
	{\uK((\cC^n)^{h\Sigma_n})} \& {\uK((\cC^{\otimes n})^{h\Sigma_n})} \\
	{\uK(\cC^n)^{h\Sigma_n}} \& {\uK(\cC^{\otimes n})^{h\Sigma_n}} \\
	{(\uK(\cC)^n)^{h\Sigma_n}} \& {\uK(\cC^{\otimes n})^{h\Sigma_n}}
	\arrow[from=1-1, to=1-2]
	\arrow[from=1-1, to=2-1]
	\arrow[from=1-2, to=2-2]
	\arrow[from=2-1, to=2-2]
	\arrow["\wr"', from=2-1, to=3-1]
	\arrow["\wr", from=2-2, to=3-2]
	\arrow[from=3-1, to=3-2]
\end{tikzcd}\]
The top square in this rectangle commutes by naturality of the assembly transformation for the $B\Sigma_n$-limit along the functor $\uK\colon\Cat_\perf^\poly\to\Spc$. The bottom square commutes, as the middle and bottom horizontal arrows identify before taking $\Sigma_n$-fixed points (\Cref{unique_equivariance}).
\end{proof}
\subsection{Semiadditive Power Operations on Algebraic $K$-Theory}
In \cite{bms}, the authors introduce functors
\[\mdef{K^{[m]}\colon \Catstm\too\tsadi^{[m]}}\]
for every $0\le m\le\infty$, such that $K^{[m]}$ is the universal approximation of $K$-theory by a ($p$-typically) $m$-semiadditive functor (\cite[Section 6.3]{bms}). Here, \[\tsadi^{[m]}=\CMon_m^{(p)}(\Sp_{(p)})\] is the universal $p$-local stable and ($p$-typically) $m$-semiadditive presentable category. There is thus a comparison map $K\Rightarrow K^{[m]}$ on the level of underlying $K$-theory spectra (\cite[Corollary 6.11]{bms}).

For $\cC\in\CAlg(\Cat_\perf)$, the commutative algebra $K^{[1]}(\cC)$ is equipped with semiadditive power operations (\Cref{semiadditive_power_operations}). We now show that these operations are compatible with the operations on $\uK(\cC)$ coming from polynomial functoriality (\Cref{poly_operations_k_theory}).
\begin{theorem}
\label{ktheory_comparison}
   Let $\cC\in\CAlg(\Cat_\perf^{1\textup{-fin}})$. For a finite $p$-group $G$ acting on $[n]$ and $V\in\cC^{BG}$, there is a commutative square
\[\begin{tikzcd}[ampersand replacement=\&,sep=large]
	{\uK(\cC)} \& {\uK(\cC)} \\
	{\uK^{[1]}(\cC)} \& {\uK^{[1]}(\cC)}
	\arrow["{\widehat\beta_G^{V}}", from=1-1, to=1-2]
	\arrow[from=1-1, to=2-1]
	\arrow[from=1-2, to=2-2]
	\arrow["{\beta_G^{[V]}}", from=2-1, to=2-2]
\end{tikzcd}\]
where $[V]$ is the class of $V$ in $K^{[1]}(\cC)^{BG}\simeq K^{[1]}(\cC^{BG})$.
\end{theorem}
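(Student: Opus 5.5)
The plan is to factor $\widehat\beta_G^V$ into the same four pieces used in \Cref{beta_polynomial}. These are
\[\cC\xlongrightarrow{(-)^{\otimes n}}\cC^{B\Sigma_n}\xlongrightarrow{i^*}\cC^{BG}\xlongrightarrow{V\otimes(-)}\cC^{BG}\xlongrightarrow{BG_!}\cC.\]
Since $\uK\colon\Cat_\perf^\poly\to\Spc$ is a functor, $\uK(\widehat\beta_G^V)$ is the composite of the images of these four functors. I would then compare each piece, one square at a time, with the corresponding piece of $\beta_G^{[V]}$ from \Cref{semiadditive_powers_defn}. The intermediate terms on the bottom row are $\uK^{[1]}(\cC)^{B\Sigma_n}$ and $\uK^{[1]}(\cC)^{BG}$, and the vertical maps pass through $\uK(\cC^{B\Sigma_n})\to\uK(\cC)^{B\Sigma_n}\to\uK^{[1]}(\cC)^{B\Sigma_n}$ and similarly for $BG$. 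The three linear pieces are functors in $\Cat_\perf$, so for them only ordinary functoriality of $\uK\to\uK^{[1]}$ is needed.

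For the first piece, \Cref{power_op_polynomial} says that $\uK((-)^{\otimes n})$ followed by assembly is $\mathcal P_n$ on $K(\cC)$. The comparison $K\Rightarrow K^{[1]}$ is lax symmetric monoidal by \cite{bms}, so $K(\cC)\to K^{[1]}(\cC)$ is a map of commutative algebras. The total power operation is natural in commutative algebra maps, so this square commutes. For the second piece, $i^*$ commutes with assembly and with $K\to K^{[1]}$ by naturality. For the third, $V\otimes(-)$ is $\cC^{BG}$-linear. Under $K^{[1]}(\cC^{BG})\simeq K^{[1]}(\cC)^{BG}$, which is an equivalence of commutative algebras, it becomes multiplication by $[V]$; it helps to first pass through $\uK(\cC^{BG})$, where it is multiplication by the class of $V$ in the ring $K(\cC^{BG})$.

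The fourth piece is the main obstacle. We must show that $K^{[1]}(BG_!)\colon K^{[1]}(\cC^{BG})\to K^{[1]}(\cC)$, transported along $K^{[1]}(\cC^{BG})\simeq K^{[1]}(\cC)^{BG}$, is the semiadditive integral $\int_{BG}$ on the $1$-commutative monoid $K^{[1]}(\cC)$. My approach is to use that $K^{[1]}$ is $1$-semiadditive in the sense of \cite{bms}. It preserves $BG$-indexed limits, giving $K^{[1]}(\cC^{BG})\simeq K^{[1]}(\cC)^{BG}$. Moreover $\Catstm$ is $1$-semiadditive, with $BG_!$ the transfer, or integral, along $BG\to\pt$ for the object $\cC$. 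A $1$-semiadditive functor between $1$-semiadditive categories preserves the canonical norm maps, and hence the integrals $q_*$, since these are built from limits, colimits and the norm. So $K^{[1]}(\int_{BG})=\int_{BG}$ on $K^{[1]}(\cC)$. The target $\tsadi^{[1]}$ is only $p$-typically $1$-semiadditive, which is where $G$ being a $p$-group is used.

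Assuming this, pasting the four squares gives the commutative square of the theorem. Along the way one checks that the assembly equivalence for $BG$ used in the third and fourth squares is the same one under which $[V]$ is defined.
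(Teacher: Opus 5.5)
Your proposal is correct and follows the paper's proof essentially step for step. You use the same factorization of $\widehat\beta_G^V$ and handle each piece the same way: \Cref{power_op_polynomial} with the commutative-algebra map $K(\cC)\to K^{[1]}(\cC)$ for the first factor, naturality for $i^*$, the symmetric monoidal transformation $K\Rightarrow K^{[1]}$ for $V\otimes(-)$, and ($p$-typical) $1$-semiadditivity of $K^{[1]}$ to identify $K^{[1]}(BG_!)$ with $\int_{BG}$, followed by pasting.

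One quibble: the $p$-group hypothesis is needed because the functor $K^{[1]}$ preserves $BG$-limits only for $p$-groups $G$ (see the paper's warning after the theorem), not because of a limitation of the target $\tsadi^{[1]}$.
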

\begin{proof}
 Let $i\colon BG\to B\Sigma_n$ classify the action of $G$ on $[n]$. The operation $\widehat\beta_G^{V}$ on $\uK(\cC)$ is given by applying $\uK$ to the composition
     \[\cC\xlongrightarrow{(-)^{\otimes n}}\cC^{B\Sigma_n}\xlongrightarrow{i^*}\cC^{BG}\xlongrightarrow{V\otimes(-)}\cC^{BG}\xlongrightarrow{BG_!}\cC. \]
  Considering the last three maps in this composition, there is a commutative diagram
\[\begin{tikzcd}[ampersand replacement=\&,column sep=huge]
	{\uK(\cC^{B\Sigma_n})} \& {\uK(\cC^{BG})} \& {\uK(\cC^{BG})} \& {\uK(\cC)} \\
	{\uK^{[1]}(\cC^{B\Sigma_n})} \& {\uK^{[1]}(\cC^{BG})} \& {\uK^{[1]}(\cC^{BG})} \& {\uK^{[1]}(\cC)} \\
	{\uK^{[1]}(\cC)^{B\Sigma_n}} \& {\uK^{[1]}(\cC)^{BG}} \& {\uK^{[1]}(\cC)^{BG}} \& {\uK^{[1]}(\cC)}
	\arrow["{\uK(i^*)}", from=1-1, to=1-2]
	\arrow[from=1-1, to=2-1]
	\arrow["{\uK(V\otimes(-))}", from=1-2, to=1-3]
	\arrow[from=1-2, to=2-2]
	\arrow["{\uK(BG_!)}", from=1-3, to=1-4]
	\arrow[from=1-3, to=2-3]
	\arrow[from=1-4, to=2-4]
	\arrow["{\uK^{[1]}(i^*)}", from=2-1, to=2-2]
	\arrow["\wr"', from=2-1, to=3-1]
	\arrow["{\uK^{[1]}(V\otimes(-))}", from=2-2, to=2-3]
	\arrow["\wr"', from=2-2, to=3-2]
	\arrow["{\uK^{[1]}(BG_!)}", from=2-3, to=2-4]
	\arrow["\wr", from=2-3, to=3-3]
	\arrow[equals, from=2-4, to=3-4]
	\arrow["{i^*}", from=3-1, to=3-2]
	\arrow["{[V]\cdot(-)}", from=3-2, to=3-3]
	\arrow["{\int_{BG}(-)}", from=3-3, to=3-4]
\end{tikzcd}\]
The bottom vertical arrows are assembly maps, and the bottom right and bottom left squares commute because $K^{[1]}$ is a ($p$-typically) $1$-semiadditive functor. The top left and top right squares commute by naturality of the transformation $K\Rightarrow K^{[1]}$, and the middle column commutes because this is a symmetric monoidal transformation of lax symmetric monoidal functors (\cite[Corollary~6.19]{bms}). 

Next, considering the first map in the above composition, there is a commutative diagram
\[\begin{tikzcd}[ampersand replacement=\&, column sep=large]
	{\uK(\cC)} \& {\uK(\cC^{B\Sigma_n})} \\
	{\uK(\cC)} \& {\uK(\cC)^{B\Sigma_n}} \\
	{\uK^{[1]}(\cC)} \& {\uK^{[1]}(\cC)^{B\Sigma_n}}
	\arrow["{\uK((-)^{\otimes n})}", from=1-1, to=1-2]
	\arrow[equals, from=1-1, to=2-1]
	\arrow[from=1-2, to=2-2]
	\arrow["{\mathcal P_n}", from=2-1, to=2-2]
	\arrow[from=2-1, to=3-1]
	\arrow[from=2-2, to=3-2]
	\arrow["{\mathcal P_n}", from=3-1, to=3-2]
\end{tikzcd}\]
in which the top square commutes by \Cref{power_op_polynomial} and the bottom square commutes since the comparison $K(\cC)\to K^{[1]}(\cC)$ is a map of commutative algebras. The right vertical composition is isomorphic to 
\[\uK(\cC^{B\Sigma_n})\too \uK^{[1]}(\cC^{B\Sigma_n})\too\uK^{[1]}(\cC)^{B\Sigma_n}\]
by naturality of the assembly map. Pasting the two diagrams gives the result.
\end{proof}
\begin{war}
    The functor $K^{[m]}$ is \textit{not} an $\ell$-typically $m$-semiadditive functor for $\ell\neq p$. In particular, the conclusion of \Cref{ktheory_comparison} does not hold when $G$ is not a $p$-group.
\end{war}
\begin{corollary}
\label{lnf_comparison}
    Suppose $\cC\in\CAlg(\Cat_\perf^{1\textup{-fin}})$ is $L_n^f$-local. Then the conclusion of \Cref{ktheory_comparison} holds with $\underline K^{[1]}(\cC)$ replaced by $\uK_{T(n+1)}(\cC)$.
\end{corollary}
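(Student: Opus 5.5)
The approach is to deduce the statement from \Cref{ktheory_comparison} by factoring the $T(n+1)$-localization of $K$-theory through $K^{[1]}$. When $\cC$ is $L_n^f$-local, results of \cite{bms} (building on the $\infty$-semiadditivity of $\Sp_{T(n+1)}$ \cite{teleambi}) identify $K_{T(n+1)}(\cC)$ with the $T(n+1)$-localization of $K^{[1]}(\cC)$. In fact one expects $L_{T(n+1)}K^{[m]}(\cC)\simeq L_{T(n+1)}K(\cC)$ for all $m\ge1$. The reason is that $L_{T(n+1)}K(-)$ on $L_n^f$-local categories is already $\infty$-semiadditive, so the universal map $K\Rightarrow K^{[1]}$ becomes an equivalence after $T(n+1)$-localization. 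The first step is therefore to record this factorization
\[K(\cC)\too K^{[1]}(\cC)\too L_{T(n+1)}K^{[1]}(\cC)\simeq K_{T(n+1)}(\cC),\]
together with the fact that every map in it is a map of commutative algebras. For the first map this is the symmetric monoidality of $K\Rightarrow K^{[1]}$ \cite[Corollary~6.19]{bms}. The second map is a localization unit, and $L_{T(n+1)}$ is symmetric monoidal.

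The second step is to show that the square for $K^{[1]}\to K_{T(n+1)}$ commutes, i.e.\ that $L\colon K^{[1]}(\cC)\to K_{T(n+1)}(\cC)$ intertwines $\beta_G^{[V]}$ on both sides. This comes from naturality of semiadditive power operations in maps of commutative algebras, applied in an ambient $1$-semiadditive category. Both $K^{[1]}(\cC)$ and $K_{T(n+1)}(\cC)$ are commutative algebras in $\tsadi^{[1]}$, which is $p$-typically $1$-semiadditive. The localization $\Sp_{T(n+1)}\subset\tsadi^{[1]}$, or equivalently $L_{T(n+1)}\colon\tsadi^{[1]}\to\Sp_{T(n+1)}$, is a symmetric monoidal functor that preserves the semiadditive integrals $\int_{BG}$ for $p$-groups $G$. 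So $\mathcal P_n$, restriction along $i$, multiplication by $L[V]$, and $\int_{BG}$ are all preserved, and the naturality square displayed after \Cref{semiadditive_powers_defn} gives the claim. We also need $L[V]$ to be the class of $V$ in $K_{T(n+1)}(\cC^{BG})\simeq K_{T(n+1)}(\cC)^{BG}$. This holds because the assembly equivalences are compatible with localization.

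Pasting this square under the square of \Cref{ktheory_comparison} proves the corollary. The main obstacle is the first step. One must justify that $K_{T(n+1)}(\cC)$ coincides with $L_{T(n+1)}K^{[1]}(\cC)$ compatibly with ring structures, and that the $1$-semiadditive structure on $\Sp_{T(n+1)}$ agrees with the one inherited from $\tsadi^{[1]}$, so that the two meanings of $\int_{BG}$ match. The second point is formal: any colimit-preserving functor between $1$-semiadditive categories preserves norms. For the first point, I would cite the comparison in \cite{bms} and the $T(n+1)$-local semiadditivity of $K$-theory of $L_n^f$-local categories from \cite{descent}. It is important that $G$ is a $p$-group, since $K^{[1]}$ is only $p$-typically semiadditive.
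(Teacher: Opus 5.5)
Your proposal is correct and follows essentially the same route as the paper. The paper identifies $K_{T(n+1)}(\cC)\simeq L_{T(n+1)}K^{[1]}(\cC)$ by localizing the comparison map (citing \cite[Corollary 3.7]{descent}), then uses that $\tsadi^{[1]}\to\Sp_{T(n+1)}$ is a symmetric monoidal localization \cite[Lemma 6.12]{bms}, so naturality of $\beta_G^{[V]}$ gives a second commuting square, and pastes this square with \Cref{ktheory_comparison}. Your extra remark that the integrals over $BG$ computed in $\Sp_{T(n+1)}$ and in $\tsadi^{[1]}$ agree for $p$-groups is left implicit in the paper, and your justification of it is sound.
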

\begin{proof}
Since $\cC$ is $L_n^f$-local, the image of the comparison map $K(\cC)\to K^{[1]}(\cC)$ under $T(n+1)$-localization is an isomorphism 
    \[K_{T(n+1)}(\cC)\iso K^{[1]}_{T(n+1)}(\cC)\]
     by \cite[Corollary 3.7]{descent}.
     
    Since $\tsadi^{[1]}\to \Sp_{T(n+1)}$ is a symmetric monoidal localization (\cite[Lemma 6.12]{bms}), the localization
    \[K^{[1]}(\cC)\too K^{[1]}_{T(n+1)}(\cC)\]
    is a map of commutative algebras in the higher semiadditive category $\tsadi^{[1]}$. By naturality of the operation $\beta_G^{[V]}$, there is thus a commutative diagram
\[\begin{tikzcd}[ampersand replacement=\&]
	{\uK^{[1]}(\cC)} \& {\uK^{[1]}(\cC)} \\
	{\uK^{[1]}_{T(n+1)}(\cC)} \& {\uK_{T(n+1)}^{[1]}(\cC)}
	\arrow["{\beta_G^{[V]}}", from=1-1, to=1-2]
	\arrow[from=1-1, to=2-1]
	\arrow[from=1-2, to=2-2]
	\arrow["{\beta_G^{[V]}}", from=2-1, to=2-2]
\end{tikzcd}\]
    Pasting this square with the square of \Cref{ktheory_comparison} and then using the above identification of $\uK^{[1]}_{T(n+1)}(\cC)$ with $ \uK_{T(n+1)}(\cC)$ gives the result.
\end{proof}

\section{Power Operations in Semiadditive Height 1}
In this section we study semiadditive power operations in height $1$. $\delta$-rings and $\lambda$-rings are essential, so we recall them in \Cref{delta_rings} and prove some results about $\delta$-rings in the relative setting. We then study the semiadditive power operation $\theta$ (\Cref{theta_details}) and use it to classify power operations assuming virtual orientability (\Cref{proof}). We conclude the section by relating these results to the redshift philosophy (\Cref{lambda_redshift}).

\subsection{$\delta$-rings and $\lambda$-rings}

\label{delta_rings}
\subsubsection{$\delta$-rings}
In this section we show some basic facts about $\delta$-rings in the relative setting.
\begin{defn}
\label{pderivation}
    A \mdef{\textit{$p$-derivation}} on a (discrete) commutative ring $R$ is a function
    \[\delta\colon R\too R\]
    that satisfies
    \begin{enumerate}
        \item (additivity) $\delta(x+y)=\delta(x)+\delta(y)+\frac{x^p+y^p-(x+y)^p}{p}$ for all $x,y\in R$.
        \item (multiplicativity) $\delta(xy)=\delta(x)y^p+x^p\delta(y)+p\delta(x)\delta(y)$.
        \item (normalization) $\delta(0)=\delta(1)=0$.
    \end{enumerate}
    The pair $(R,\delta)$ is called a \mdef{\textit{$\delta$-ring}}. Denote by $\mdef{\CRing^\delta}$ the $1$-category of $\delta$-rings and $\delta$-ring homomorphisms. 
    
    An \mdef{\textit{additive $p$-derivation}} on $R$ is a function $\delta\colon R\to R$ which satisfies properties (1) and (3).
\end{defn}
\begin{rem}\label{frobenius_lift}
The above identities are engineered so that $\psi(x)=x^p+p\delta(x)$ is a ring homomorphism that lifts the Frobenius endomorphism on $R/p$. When $R$ is torsion-free, the operation $\delta$ is entirely determined by the Frobenius lift $\psi$. In general, a $p$-derivation on $R$ should be considered the data of a \textit{derived} lift $\psi$ of Frobenius, so that $\delta$ `witnesses' the fact that $\psi$ and $x^p$ agree on $R/p$ (\cite[Remark 2.5]{scholze}).
\end{rem}

  \begin{rem}
\label{forgetful_adjoints}
 The forgetful functor $U\colon \CRing^\delta\to\CRing$ admits both left and right adjoints. The right adjoint can be identified with the functor $W_p$ of $p$-typical Witt vectors (\cite{joyal_delta}), and the left adjoint satisfies
\[\Free^\delta(\ZZ[t])=\ZZ[t,\delta(t),\delta^2(t),\ldots],\]
with the action of $\delta$ extended from the generators (for example, by \cite[Lemma 2.11]{scholze}).
\end{rem}

\begin{defn}
Let $R$ be a $\delta$-ring. The $1$-category of $\delta$-$R$-algebras is the under category
\[\mdef{\CRing_R^\delta:=\CRing^\delta_{R/}}.\]
\end{defn}
The forgetful functor $U$ induces the `underlying $R$-algebra' functor $U_R\colon \CRing_R^\delta\to \CRing_R$. 

\begin{proposition}
    The functor $U_R\colon \CRing_R^\delta\to\CRing_R$ has both left and right adjoints. The left adjoint satisfies
\[\Free^\delta_R(R[t])=R[t,\delta(t),\delta^2(t),\ldots],\]
with the action of $\delta$ extended from the generators. 
\end{proposition}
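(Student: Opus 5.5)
The plan is to reduce everything to the absolute statements of \Cref{forgetful_adjoints}, using that $\CRing_R^\delta$ is an under category. Write $U\colon\CRing^\delta\to\CRing$ for the forgetful functor, with left adjoint $\Free^\delta$ and right adjoint $W_p$. The key observation is that for a $\delta$-ring $R$, the category $\CRing^\delta_R$ is the under category $\CRing^\delta_{R/}$, and $U_R$ is the functor $\CRing^\delta_{R/}\to\CRing_{U(R)/}=\CRing_R$ induced by $U$. I would then treat the two adjoints separately, since they come from formally different mechanisms.

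\emph{Right adjoint.} Let $A$ be an $R$-algebra with structure map $f\colon R\to A$. By the adjunction $U\dashv W_p$, $f$ corresponds to a $\delta$-map $\tilde f\colon R\to W_p(A)$. I claim that $(W_p(A),\tilde f)$ is the value of the right adjoint. For a $\delta$-$R$-algebra $g\colon R\to B$, a $\delta$-map $B\to W_p(A)$ under $R$ is a $\delta$-map $h$ with $h\circ g=\tilde f$. Under adjunction, $h\circ g$ corresponds to $\bar h\circ U(g)$, where $\bar h$ is the adjoint of $h$. So the condition $h\circ g=\tilde f$ becomes $\bar h\circ g=f$, that is, $\bar h$ is an $R$-algebra map $U_R(B)\to A$. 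Naturality of the adjunction bijection gives this directly. This step is purely formal.

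\emph{Left adjoint.} For an $R$-algebra $A$ with structure map $f\colon R\to A$, I would define $\Free^\delta_R(A)$ as the pushout in $\CRing^\delta$
\[R\leftarrow \Free^\delta(U R)\to \Free^\delta(A),\]
where the left map is the counit and the right map is $\Free^\delta(f)$. Two inputs are needed:
\begin{enumerate}
\item $\CRing^\delta$ has pushouts. Since $U$ has both adjoints, it preserves all limits and colimits. Colimits in $\CRing^\delta$ exist because it is monadic and comonadic over $\CRing$ and is locally presentable, being the algebras for an accessible monad. For pushouts specifically, one can also equip $B\otimes_A C$ with its induced $\delta$-structure.
\item The universal property. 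A $\delta$-map out of the pushout, under $R$, amounts to a $\delta$-map $\Free^\delta(A)\to B$ compatible on $\Free^\delta(UR)$. By adjunction, this is a ring map $A\to UB$ compatible with $f$, i.e.\ an $R$-algebra map $A\to U_R B$.
\end{enumerate}

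\emph{The formula.} Specialize to $A=R[t]=U(R)\otimes\ZZ[t]$, a coproduct in $\CRing$. Since $\Free^\delta$ is a left adjoint, it preserves coproducts, so
\[\Free^\delta(R[t])\simeq\Free^\delta(UR)\otimes\Free^\delta(\ZZ[t]).\]
The pushout therefore collapses to $R\otimes\Free^\delta(\ZZ[t])$. Because $U$ preserves colimits, the underlying ring is $R\otimes_\ZZ\ZZ[t,\delta(t),\delta^2(t),\dots]=R[t,\delta(t),\dots]$. The $\delta$-structure is the coproduct one, so $\delta$ restricts to the given one on $R$ and acts on $\delta^i(t)$ as $\delta^{i+1}(t)$; by additivity and multiplicativity, it is determined by these values on generators.

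The main obstacle is justifying that $U$ preserves the relevant coproducts, so that the underlying ring of $R\otimes\Free^\delta(\ZZ[t])$ in $\CRing^\delta$ really is the tensor product of rings. This follows from $U$ having a right adjoint $W_p$. One also needs the standard fact that $\delta$ extends uniquely to a tensor product of $\delta$-rings.
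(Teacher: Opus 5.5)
Your proposal is correct and follows the same route as the paper: pass to the under-category adjunctions induced by $\Free^\delta\dashv U\dashv W_p$, identify $\Free^\delta_R(R[t])$ with the coproduct $R\otimes\Free^\delta(\ZZ[t])$ in $\CRing^\delta$, and use that $U$ preserves coproducts. The one difference is how you get that identification. You use the explicit pushout formula for the slice left adjoint, whereas the paper reads it off a commuting square of left adjoints; your version also makes explicit that $\CRing^\delta$ has pushouts, which the paper uses without comment.
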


\begin{proof}
The functor $U\colon \CRing^\delta\to\CRing$ admits both right and left adjoints (\Cref{forgetful_adjoints}). Passing to slice categories gives a diagram of adjunctions
\[\begin{tikzcd}[ampersand replacement=\&]
	{\CRing^\delta_{R/}} \& {\CRing_{U(R)/}}
	\arrow[""{name=0, anchor=center, inner sep=0}, from=1-1, to=1-2]
	\arrow[""{name=1, anchor=center, inner sep=0}, "{\Free^\delta_R}"', curve={height=18pt}, from=1-2, to=1-1]
	\arrow[""{name=2, anchor=center, inner sep=0}, "{W_p}", curve={height=-18pt}, from=1-2, to=1-1]
	\arrow["\perp"{description}, draw=none, from=0, to=2]
	\arrow["\perp"{description}, draw=none, from=1, to=0]
\end{tikzcd}\]
To identify $\Free_R^\delta(R[t])$, observe that there is a commutative diagram of left adjoints
\[\begin{tikzcd}[ampersand replacement=\&]
	{\CRing^\delta} \& {\CRing^\delta_R} \\
	\CRing \& {\CRing_{U(R)}}
	\arrow["{-\otimes R}", from=1-1, to=1-2]
	\arrow["{\Free^\delta}", from=2-1, to=1-1]
	\arrow["{-\otimes R}", from=2-1, to=2-2]
	\arrow["{\Free^\delta_R}"', from=2-2, to=1-2]
\end{tikzcd}\]
Thus $\Free_R^\delta(R[t])$ is the tensor product $R\otimes\Free^\delta(\ZZ[t])$ of $\delta$-rings. Since the tensor product of $\delta$-rings is computed as the tensor product of underlying commutative rings, the result follows.
\end{proof}
\begin{defn}
\label{delta_weight}
We denote
\[\mdef{\Free_R^\delta(t):=\Free_R^\delta(R[t])}.\]
Since $\Free_R^\delta(t)$ is the free polynomial algebra on generators $\delta^r(t)$, we define a grading on it by setting $|\delta^r(t)|=p^r$. We refer to this as the \mdef{\textit{weight grading}} on a free $\delta$-ring.
\end{defn}
\begin{example}
\label{Deltacoproducts}
For every set $T$,
\[\Free_R^\delta(T)=\bigotimes_{t\in T}\Free_R^\delta(t)\]
is the polynomial algebra on iterates of the operation $\delta$ applied to the elements of $T$. 
\end{example}
Since it is free, $\Free_R^\delta(t)$ has lots of additional structure. In particular, it is equipped with a coaddition
\begin{align*}\Delta\colon \Free_R^\delta(t)\too \Free_R^\delta(t)\otimes \Free_R^\delta(t)
\end{align*}
that represents addition on $\delta$-$R$-algebras. $\Delta$ is determined by
\[\Delta(t)=t\otimes 1+1\otimes t.\]
A primitive element $\alpha\in \Free_R^\delta(t)$ corresponds to a unary operation on $\delta$-$R$-algebras that satisfies the equation $\alpha(x+y)=\alpha(x)+\alpha(y).$ Since the Frobenius lift $\psi(t)=t^p+p\delta(t)$ is such an additive operation, it corresponds to a primitive element in $\Free_R^\delta(t)$. In fact, $\psi$ and its iterates are the only ones:
\begin{proposition}
\label{primitive_classification}
    Suppose $\alpha\in \Free_R^\delta(t)$ is a primitive element of pure weight $n$. Then 
    \[\alpha =c\cdot \psi^{ r}(t),\]
    where $c\in R$ and $\psi$ is the Frobenius lift associated to $\delta$.
\end{proposition}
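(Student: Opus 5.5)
The plan is to reduce to a universal computation over $\ZZ_{(p)}$ and then control base change through a torsion-freeness statement. Throughout I will assume that $R$ is $p$-local, i.e.\ a $\ZZ_{(p)}$-algebra, which is the situation in which the proposition will be applied. Some such hypothesis appears necessary. For example, if $\ell\ne p$ then $p$ is invertible in $\FF_\ell$, so $\FF_\ell$ carries a $\delta$-structure; in $\Free^\delta_{\FF_\ell}(t)$ the elements $\psi^i(t)$ are primitive polynomial generators, so $t^\ell$ is a primitive of weight $\ell$ that is not of the stated form. I would also read the statement as including that $n$ is a power $p^r$, with $\alpha=0$ otherwise.

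\textbf{Step 1: reduction to a saturation statement.} Write $A=\Free^\delta_{\ZZ_{(p)}}(t)=\ZZ_{(p)}[x_0,x_1,\dots]$ with $x_i=\delta^i(t)$. By the previous proposition, $\Free^\delta_R(t)=R\otimes A$. The coaddition $\Delta$ is defined over $\ZZ_{(p)}$ and preserves the weight grading, and each weight piece $A_n$ is a finite free $\ZZ_{(p)}$-module. Consider the reduced coproduct
\[\overline\Delta\colon A_n\too (A\otimes A)_n,\qquad f\longmapsto \Delta f-f\otimes1-1\otimes f.\]
The primitives of weight $n$ in $\Free^\delta_R(t)$ are the kernel of $R\otimes\overline\Delta$. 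For a map of finite free $\ZZ_{(p)}$-modules, forming the kernel commutes with arbitrary base change provided the image is saturated, i.e.\ the cokernel is torsion-free. (In that case the map splits as $\ker\oplus\,\mathrm{im}$ with $\mathrm{im}$ a direct summand.) So it suffices to prove two things:
\begin{enumerate}
\item[(a)] over $\ZZ_{(p)}$, the kernel of $\overline\Delta$ on $A_n$ is $\ZZ_{(p)}\cdot\psi^r(t)$ if $n=p^r$, and $0$ otherwise;
\item[(b)] the image of $\overline\Delta$ is saturated.
\end{enumerate}
Since the source is free, (b) is equivalent to $\dim_{\FF_p}\ker(\overline\Delta\otimes\FF_p)=\operatorname{rank}\ker\overline\Delta$.

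\textbf{Step 2: the rational computation.} After inverting $p$, the ghost elements
\[w_i=\psi^i(t)=x_0^{p^i}+px_1^{p^{i-1}}+\dots+p^ix_i\]
form polynomial generators of $A\otimes\QQ$. Each $w_i$ is primitive, since $\psi$ is additive, and has pure weight $p^i$. In characteristic $0$, the primitives of a polynomial Hopf algebra on primitive generators are exactly the linear span of those generators. Hence the weight-$n$ primitives in $A\otimes\QQ$ are $\QQ\cdot w_r$ when $n=p^r$, and $0$ otherwise. Intersecting with $A$ gives (a): the coefficient of $x_0^{p^r}$ in $w_r$ is $1$, so $w_r$ is not divisible in $A$, and therefore $(A\otimes\QQ)\,w_r\cap A_n=\ZZ_{(p)}w_r$.

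\textbf{Step 3: the mod $p$ computation, which I expect to be the main obstacle.} I need that the primitives of $A/p$ in weight $p^r$ form the one-dimensional space spanned by $w_r\equiv x_0^{p^r}$, and that there are none in other weights. For this I would identify $A$, with its coaddition, as the coordinate Hopf algebra of the Witt vector group scheme $W$, written in $\delta$-coordinates. Indeed, for any ring $B$,
\[\Hom_{\CRing}(A,B)=\Hom_{\CRing^\delta}(A,W(B))=W(B),\]
since $\Free^\delta$ is left adjoint to the forgetful functor and $W$ is right adjoint to it. This identification is compatible with addition. Under it, primitives of $A/p$ correspond to homomorphisms of group schemes $W_{\FF_p}\to\mathbb G_a$. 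I would then show that these homomorphisms are exactly the $\FF_p$-linear combinations of $F^k\circ x_0$, namely $x_0^{p^k}$. To do so, I would work on the finite truncations $W_m$ by induction on $m$, using the exact sequences $0\to\mathbb G_a\xrightarrow{V^{m}}W_{m+1}\to W_m\to0$. The key input is that a homomorphism $W_{m+1}\to\mathbb G_a$ restricting to zero on $W_m$-kernels comes from $W_m$, since the extension class is nontrivial; this is the standard Dieudonné-module computation $\Hom(W_m,\mathbb G_a)=\FF_p[F]\,x_0$. Since $x_0^{p^k}$ has weight $p^k$, there is one primitive in each weight $p^k$ and none otherwise. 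This matches the rank from Step 2 and gives (b).

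Combining Steps 1--3, a primitive $\alpha$ of pure weight $n$ in $\Free^\delta_R(t)$ lies in $R\otimes\ZZ_{(p)}\psi^r(t)$. That is, $\alpha=c\cdot\psi^r(t)$ for some $c\in R$, and $\alpha=0$ unless $n=p^r$.
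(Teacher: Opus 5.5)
Your proposal is correct. Where it matters, it is more complete than the paper's proof.

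\textbf{Comparison with the paper.} The paper's argument is essentially your Step 2 alone. It notes that $\Delta$ on $\Free^\delta_R(t)$ is base-changed from $\Free^\delta_\ZZ(t)$, declares that it therefore suffices to treat the universal case, and reads off the primitives of $\Free^\delta_\ZZ(t)$ after rationalizing. That reduction tacitly assumes the kernel of $R\otimes\overline\Delta$ is $R\otimes\ker\overline\Delta$. This is exactly the point your Step 1 isolates, and your $\FF_\ell$ example shows it can fail, so the proposition does need a hypothesis.

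The rational argument alone suffices when $R$ is torsion-free. Then $R\otimes A$ embeds in its rationalization. The $\delta$-map $t\mapsto 1$ kills all $\delta^i(t)$ for $i\ge 1$ and sends $\psi^r(t)\mapsto 1$, so the coefficient of $t^{p^r}$ in $\psi^r(t)$ is $1$, which forces $c\in R$. It does not suffice when $R$ has $p$-torsion, and that is the case actually needed. \Cref{injective} applies the proposition to $\Free^\delta_{R_*}(t)=\Free^\delta_{\ZZ}(t)\otimes R_*$ with $R_*=\pi_*R$, which is $p$-local but typically has $p$-torsion. Your saturation statement (b), via $\mathrm{Hom}(W_{\FF_p},\mathbb G_a)=\FF_p[F]\,x_0$, supplies the missing input. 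Your splitting argument works for an arbitrary $\ZZ_{(p)}$-module of coefficients, so it also covers the graded variant of \Cref{graded_delta}.

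\textbf{Two points to tidy up.}

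First, the displayed formula $w_i=x_0^{p^i}+px_1^{p^{i-1}}+\dots+p^ix_i$ is the ghost formula in Witt coordinates, not in $x_i=\delta^i(t)$. For instance, $\psi^2(t)=(t^p+p\delta t)^p+p(\delta t)^p+p^2\delta^2 t$. You only use two facts: $w_i\equiv x_0^{p^i}\pmod p$, which holds since $\psi$ lifts Frobenius, and that $w_i$ is triangular in the $x_j$ with top term $p^ix_i$. So nothing breaks. Note, though, that identifying $\ZZ_{(p)}[x_0,\dots,x_{m-1}]$ with $\mathcal O(W_m)$ uses that the change between $\delta$-coordinates and Witt coordinates is integral and unitriangular.

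Second, the ``key input'' sentence in Step 3 is misstated. A homomorphism that vanishes on $V^m\mathbb G_a$ factors through $W_m$ by exactness alone. What you need is the converse: every homomorphism $W_{m+1}\to\mathbb G_a$ vanishes on $V^m\mathbb G_a$. Equivalently, pushing the extension out along any nonzero $f=\sum_k c_kF^k$ gives a nonsplit extension; nontriviality of the class itself is not enough.
\begin{itemize}
\item Restricting along $V^{m-1}\colon W_2\to W_{m+1}$ reduces this to the extension $0\to\mathbb G_a\to W_2\to\mathbb G_a\to 0$.
\item There, a homomorphism restricting to $f$ on $V\mathbb G_a$ has the form $g(a_0)+f(a_1)$ with $g(x+y)-g(x)-g(y)=-\sum_k c_k C(x,y)^{p^k}$, where $C=\frac{x^p+y^p-(x+y)^p}{p}$.
\item The right-hand side has degree-$p^{k+1}$ component $-c_kC^{p^k}$. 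Over $\FF_p$, a coboundary has zero component in every degree $p^{k+1}$, so all $c_k=0$.
\end{itemize}
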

\begin{proof}
    The coaddition $\Delta$ on $\Free_R^\delta(t)$ is given by tensoring the coproduct
    \[\Delta\colon \Free_{\ZZ}^\delta(t)\too\Free_{\ZZ}^\delta(t)\otimes\Free_{\ZZ}^\delta(t)\]
    with $R$, so it is enough to classify primitives in the universal case.  Since both sides are free $\ZZ$-modules, we can read off the primitives after rationalization. Upon rationalization, \[\Free_{\ZZ}^\delta(t)\otimes\QQ\cong\QQ[t,\psi(t),\psi^2(t),\ldots]\]
    is a polynomial algebra on the iterates of the Frobenius lift $\psi$ (e.g., by \cite[Lemma 2.11]{scholze}). Since each $\psi^r(t)$ is primitive and a product of primitives is not primitive, the result follows.
\end{proof}

\begin{rem}
\label{graded_delta}
    We will require a variant of the above which accounts for \textit{graded} rings $R_*$ whose $0$th graded piece $
    R_0$ is a $\delta$-ring. In this case, we define
    \[\mdef{\Free_{R_*}^\delta(t):=\Free_{R_0}^\delta(t)\otimes_{R_0}R_*}.\]
    The weight grading on $\Free_{R_0}^\delta(t)$ extends naturally to $\Free_{R_*}^\delta(t)$, making it a \textit{bigraded} ring. Also, $\Free_{R_*}^\delta(t)$ has a coaddition $\Delta$ given by tensoring the coaddition on $\Free_{R_0}^\delta(t)$ with $R_*$. 
\end{rem}
\subsubsection{$\lambda$-rings}
In this section we recall different definitions of $\lambda$-rings and describe how to pass between them. The results are summarized in \Cref{tab:my_table}.
\begin{defn}[{\cite{knutson}}]
\label{defn1}A \mdef{\textit{$\lambda$-ring}} is a commutative ring $R$ equipped with functions $\mdef{\lambda^n\colon R\to R}$, for $n\ge0$, that satisfy the identities
\begin{enumerate}
    \item $\lambda^0(x)=1$ and $\lambda^1(x)=x$,
    \item $\lambda^n(x+y)=\sum_{i=0}^n\lambda^i(x)\lambda^{n-i}(y)$,
    \item $\lambda^n(xy)=P_n(\lambda^1(x),\ldots,\lambda^n(x),\lambda^1(y),\ldots,\lambda^n(y))$,
    \item $\lambda^m(\lambda^n(x))=P_{m,n}(\lambda^1(x),\ldots,\lambda^{mn}(x))$,
\end{enumerate}
where the $P_n,P_{m,n}$ are certain integer polynomials.
\end{defn}
\Cref{defn1} axiomatizes the action of exterior powers on vector spaces, so that the operation $\lambda^n$ is modeled on the $n$th exterior power. There is an alternative, equivalent definition in which the operations are modeled on the \textit{symmetric} powers of vector spaces.
\begin{defn}[{\cite{tau_rings}}]
\label{defn2}
    A \textit{\mdef{$\lambda$-ring}} is a commutative ring $R$ equipped with functions $\mdef{\sigma^n\colon R\to R}$, for $n\ge0$, satisfying conditions analogous to those of \Cref{defn1}.
\end{defn}
There is a \textit{third} equivalent definition that dispenses with the inexplicit polynomials $P_n,P_{m,n}$ that appear in the previous two definitions.
\begin{defn}[{\cite{joyal_lambda,rezk_lambda,hubner}}]
\label{defn3}
A \mdef{\textit{$\lambda$-ring}} is a commutative ring $R$ equipped with $p$-derivations $\mdef{\delta_p\colon R\to R}$ for all primes $p$, such that
\[\psi_p\circ\delta_q=\delta_q\circ \psi_p,\]
where $\psi_p(x)=x^p+p\delta_p(x)$ is the Frobenius lift associated to $\delta_p$.
\end{defn}
\begin{rem}
\label{ptypical}
When $R$ is $p$-local, an $\ell$-derivation (for $\ell\neq p$) is nothing more than a ring endomorphism of $R$ (\Cref{frobenius_lift}). Therefore, we consider a $\delta$-ring to be the `$p$-typical' analog of a $\lambda$-ring, in the sense that it is the interesting part of a $\lambda$-ring structure on a $p$-local ring (\cite[Remark~2.3]{scholze}).
\end{rem}
All three of the above definitions give a `generators-and-relations' presentation of a $\lambda$-ring. To move between them, we use the following:

\begin{theorem}[{\cite{knutson,donald}}]
\label{free_lambda}
    The ring
    \[\mdef{\Lambda:=\bigoplus_{n\ge0}\Rep_\CC^\textup{f.d.}(\Sigma_n)}\]
    is the free $\lambda$-ring on a single element. The Frobenius characteristic map gives an isomorphism \[\Lambda\iso \textup{Symm}\]
    with the ring of symmetric functions.
\end{theorem}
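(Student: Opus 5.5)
The statement is classical: the free $\lambda$-ring on one generator is $\Lambda=\bigoplus_n\Rep_\CC^{\textup{f.d.}}(\Sigma_n)$, and the Frobenius characteristic identifies it with $\textup{Symm}$. The plan has three steps.

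\textbf{Ring structure.} First, $\Lambda$ is made into a ring. Addition is direct sum. Multiplication is induction along $\Sigma_a\times\Sigma_b\subset\Sigma_{a+b}$:
\[[V]\cdot[W]=[\Ind_{\Sigma_a\times\Sigma_b}^{\Sigma_{a+b}}(V\boxtimes W)].\]
Associativity and commutativity follow from transitivity of induction and conjugation invariance. The $\lambda$-operations are then defined by plethysm. On a generator $[V]$ of degree $a$, set
\[\sigma^n[V]=[\Ind_{\Sigma_n\wr\Sigma_a}^{\Sigma_{na}}V^{\boxtimes n}],\]
where $\Sigma_n$ permutes the tensor factors. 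This is extended to sums by the formula $\sigma^n(x+y)=\sum\sigma^i(x)\sigma^{n-i}(y)$. Equivalently, $\Lambda$ acts on any $\QQ$-linear symmetric monoidal category through Schur functors
\[X\mapsto (V\otimes X^{\otimes a})_{h\Sigma_a},\]
and $\sigma^n$ is composition with $\textup{Sym}^n$.

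\textbf{Frobenius characteristic.} Next, define $\mathrm{ch}\colon\Lambda\to\textup{Symm}\otimes\QQ$ by
\[\mathrm{ch}(V)=\frac{1}{n!}\sum_{g\in\Sigma_n}\chi_V(g)\,p_{\rho(g)},\]
where $\rho(g)$ is the cycle type of $g$ and $p_\mu$ is the power-sum symmetric function. Frobenius reciprocity together with the character formula for induced representations shows that $\mathrm{ch}$ is a ring homomorphism. Orthogonality of characters shows that it is injective. It sends the trivial representation of $\Sigma_n$ to the complete symmetric function $h_n$. The image is all of $\textup{Symm}$: the permutation modules $\Ind_{\Sigma_\mu}^{\Sigma_n}\one$ map to the products $h_\mu$, which form a $\ZZ$-basis of $\textup{Symm}$, and Young's rule shows these modules span $\Lambda$. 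This gives the isomorphism $\Lambda\cong\textup{Symm}=\ZZ[h_1,h_2,\ldots]$.

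\textbf{Freeness.} Finally, freeness on the generator $e=[\one_{\Sigma_1}]$. The map $\mathrm{ch}$ transports $\sigma^n$ to the standard $\lambda$-structure on $\textup{Symm}$, namely plethysm with $h_n$. Since $\sigma^n(e)$ is the trivial representation of $\Sigma_n$, it maps to $h_n$. Hence $\Lambda$ is generated as a ring by the elements $\sigma^n(e)$, so any $\lambda$-map out of $\Lambda$ is determined by the image of $e$.

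For existence, given a $\lambda$-ring $R$ and $x\in R$, define $\Lambda\to R$ by $h_n\mapsto\sigma^n(x)$. This is a ring map because $\textup{Symm}$ is polynomial on the $h_n$. The main obstacle is showing that this map commutes with all $\sigma^m$. The identities $\sigma^m(\sigma^n(x))=P_{m,n}(\ldots)$ and $\sigma^m(xy)=P_m(\ldots)$ in \Cref{defn2} are exactly those holding universally in $\textup{Symm}$. So it suffices to verify them in $\textup{Symm}$ itself, and I would do this by the splitting principle. Passing to finitely many variables, $\textup{Symm}\to\ZZ[x_1,\ldots,x_N]^{\Sigma_N}$, each $x_i$ is line-like, meaning $\sigma^n(x_i)=x_i^n$. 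The polynomials $P_{m,n}$ and $P_m$ are then defined precisely so that these identities hold on sums of line-like elements.
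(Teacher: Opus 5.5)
The paper gives no proof of this statement; it quotes it from Knutson and Yau. Your outline is the classical argument from those sources: the induction product, $\mathrm{ch}$ as an isometry onto $\textup{Symm}=\ZZ[h_1,h_2,\ldots]$, uniqueness because the elements $\sigma^n(e)\mapsto h_n$ generate, and existence via $h_n\mapsto\sigma^n(x)$. The structure is right. However, the step you yourself flag as the main obstacle is not closed by what you wrote. Your splitting-principle check establishes
\[\sigma^m(yz)=P_m(\sigma^1y,\ldots,\sigma^my,\sigma^1z,\ldots,\sigma^mz),\qquad \sigma^m(\sigma^ny)=P_{m,n}(\sigma^1y,\ldots,\sigma^{mn}y)\]
only when $y,z$ are sums of line-like elements, i.e.\ have nonnegative coefficients on monomials.

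Elements of $\textup{Symm}$ such as $-h_1$ or $h_2-h_1^2=-e_2$ are only differences of such elements. The identities do not pass to differences for formal reasons. Already for the product identity, deducing it for $y+y'$ from $y$ and $y'$ (let alone for $y-y'$) requires that the universal product on $1+tA[[t]]$ defined by the $P_m$ distributes over multiplication of power series. That is the nontrivial theorem that $1+tA[[t]]$ is a ring. The composition identity in turn depends on the product identity for virtual elements. So as written you have not shown that $\Lambda$ is a $\lambda$-ring at all, and that is part of the claim.

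For the universal property alone, the effective case does suffice. Run the closure argument on $\{y: f(\sigma^m y)=\sigma^m f(y)\text{ for all }m\}$, where $f\colon\textup{Symm}\to R$ is your ring map. You only ever multiply the effective elements $h_\mu$ and apply the composition identity to $h_n$. Sums and negatives are handled by the sum axiom and by $\sigma_t(-y)=\sigma_t(y)^{-1}$.

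The missing idea is to apply the splitting principle to \emph{generic} rather than effective elements. Since $\textup{Symm}$ is torsion-free, you can work in $\textup{Symm}\otimes\QQ=\QQ[p_1,p_2,\ldots]$. There, for each $y$, the map $g\mapsto g[y]$ is a ring homomorphism, with $p_k[yz]=p_k[y]\,p_k[z]$ and $p_k[p_l[y]]=p_{kl}[y]$. By Newton's identities, both sides of each axiom are then fixed rational polynomials in the $p_k[y],p_k[z]$, valid for all $y,z$. These polynomials agree because they agree at $y=h_1(x)$, $z=h_1(x')$ in $\textup{Symm}\otimes\textup{Symm}$. At that point the $p_k[y],p_k[z]$ are algebraically independent, and the identities there are literally the definitions of $P_m$ and $P_{m,n}$. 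Alternatively, you can cite Atiyah--Tall for the fact that $1+tA[[t]]$ is a special $\lambda$-ring.

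Separately, the identification of your wreath-product $\sigma^n$ with $h_n[-]$ under $\mathrm{ch}$ is a genuine character computation on conjugacy classes of $\Sigma_n\wr\Sigma_a$. You should cite it or carry it out rather than assert it.
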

Every element of $\Lambda$ defines a natural operation on $\lambda$-rings. We now identify the operations appearing in the above definitions.
\begin{example}
\label{power_character}
    The identity operation $x\mapsto x$ corresponds to the trivial $1$-dimensional representation of $\Sigma_1$ in $\Lambda$. Since the ring structure on $\Lambda$ is the `outer/induction product' (\cite[III.1]{knutson}), it follows that the operation $x\mapsto x^n$ corresponds to the regular representation of $\Sigma_n$. 
\end{example}
\begin{example}
    The operation $\lambda^n$ corresponds to the sign representation of $\Sigma_n$. Under the isomorphism $\Lambda\cong \text{Symm}$, the sign representation corresponds to the $n$th elementary symmetric polynomial $e_n$. 
    \end{example}\begin{example}
    The operation $\sigma^n$ corresponds to the trivial representation of $\Sigma_n$, which corresponds to the $n$th homogeneous symmetric polynomial $h_n$. 
    \end{example}
    By symmetric function identities, it follows that $\lambda^n$ and $\sigma^n$ are related by
    \[\sum_{i=0}^n(-1)^i\sigma^n(x)\lambda^{n-i}(x)=0.\]
\begin{example}
\label{psi_character}
    The Frobenius lift $\psi_p(x)=x^p+p\delta_p(x)$ corresponds to the $p$th power sum polynomial under the isomorphism $\Lambda\cong\text{Symm}$. The virtual character of the corresponding $\Sigma_p$-representation is equal to $p$ when evaluated on $p$-cycles in $\Sigma_p$, and $0$ otherwise (\cite[III.2]{knutson}, \cite[18.6]{haze}).
\end{example}
We now determine the virtual representations in $\Lambda$ corresponding to the operations $\delta_p$ appearing in \Cref{defn3}.
\begin{proposition}
\label{delta_character}
    Let $V_p\in\Rep_\CC^\textup{f.d.}(\Sigma_p)$ be the virtual $\Sigma_p$-representation corresponding to the $\lambda$-ring operation $\delta_p$. Then the character of $V_p$ is given by
    \begin{align*}
       \chi_{V_p}\colon  [\sigma]&\longmapsto\begin{cases}1&\sigma\textup{ is a }p\textup{-cycle}\\
        -(p-1)!&\sigma\textup{ is the identity}\\
        0&\textup{otherwise}\end{cases}
    \end{align*}
\end{proposition}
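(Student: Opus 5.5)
Since $\delta_p(x)=\frac{\psi_p(x)-x^p}{p}$ in any torsion-free $\lambda$-ring, and in particular in the free $\lambda$-ring $\Lambda\cong\textup{Symm}$, the plan is to write the operation $\delta_p$ as an element of $\Lambda$ and read off its virtual character from the characters of $\psi_p$ and of $x\mapsto x^p$. By \Cref{free_lambda}, the operation $\delta_p$ is an element of $\Lambda$ of degree $p$. In $\Lambda$ we have
\[pV_p=[\psi_p]-[x^p].\]
Since $\Lambda$ is torsion-free, $V_p$ is determined by this equation, so it suffices to compute the characters of the two terms on the right.

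By \Cref{psi_character}, the virtual character of $\psi_p$ takes the value $p$ on $p$-cycles and $0$ elsewhere. By \Cref{power_character}, the operation $x\mapsto x^p$ corresponds to the regular representation of $\Sigma_p$, whose character is $p!$ at the identity and $0$ elsewhere. Subtracting gives
\[p\chi_{V_p}(\sigma)=\begin{cases}p&\sigma\text{ a }p\text{-cycle}\\-p!&\sigma=\mathrm{id}\\0&\text{otherwise.}\end{cases}\]
Note that for $p=2$ the transposition is both ``a $p$-cycle'' and not the identity, so the cases are still disjoint. Dividing by $p$ yields the claimed character, with value $-(p-1)!$ at the identity.

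The only point requiring care is the identification $\delta_p=\frac{\psi_p-x^p}{p}$ inside $\Lambda$, together with the claim that this element actually lies in $\Lambda$ rather than only in $\Lambda\otimes\QQ$. This holds because, under \Cref{defn3}, $\delta_p$ is by definition an operation on all $\lambda$-rings. It is therefore represented by some element of $\Lambda$, and that element satisfies $p\delta_p=\psi_p-(\cdot)^p$ in the torsion-free ring $\Lambda$. As a sanity check, the resulting class function is integral valued. Its inner product with each irreducible character is $\frac{1}{p!}\bigl((p-1)!\chi(c)-(p-1)!\chi(1)\bigr)=\frac{\chi(c)-\chi(1)}{p}$, summing over the $(p-1)!$ $p$-cycles. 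This is an integer because $\chi(c)\equiv\chi(1)\pmod p$, which holds since $c$ has order $p$ and $\chi(c)$ is rational. So $\chi_{V_p}$ is a virtual character, as it must be.
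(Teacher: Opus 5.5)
Your proposal is correct and is the same argument as the paper's. You solve $\psi_p(x)=x^p+p\delta_p(x)$ in the torsion-free ring $\Lambda$, using the character of $x^p$ (the regular representation) and the character of $\psi_p$ (equal to $p$ on $p$-cycles and $0$ otherwise), and then divide by $p$. Your remarks on why $(\psi_p-x^p)/p$ lies in $\Lambda$ itself, your integrality sanity check, and your treatment of the case $p=2$ are all sound details that the paper's one-line proof leaves implicit.
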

\begin{proof}
    The character follows from the equation $\psi_p(x)=x^p+p\delta_p(x)$, using the characters of the representations corresponding to the operations $x^p$ and $\delta_p$ (\Cref{power_character}, \Cref{psi_character}).
\end{proof}
We can use \Cref{delta_character} to give another expression for $V_p$.
\begin{corollary}
\label{delta_rep}
    There is an equality of virtual representations
    \[V_p=\frac{1}{p-1}\bigg(\CC[\Sigma_p/C_p]-\CC[\Sigma_p]\bigg)\]
    in $\Rep_\CC^\textup{f.d.}(\Sigma_p)$.
\end{corollary}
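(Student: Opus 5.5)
Since a virtual representation of a finite group is determined by its character, the plan is to compute the character of the right-hand side and compare it with \Cref{delta_character}. Both $\CC[\Sigma_p/C_p]$ and $\CC[\Sigma_p]$ are permutation representations. The character of a permutation representation at $\sigma$ is the number of fixed points of $\sigma$ on the underlying set, so the computation reduces to counting fixed points.

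For $\CC[\Sigma_p]$, the character is $p!$ at the identity and $0$ elsewhere. For $\CC[\Sigma_p/C_p]$, we use the standard formula for induced characters:
\[\chi(\sigma)=\frac{1}{|C_p|}\,\#\{g\in\Sigma_p: g^{-1}\sigma g\in C_p\}.\]
At the identity this gives $|\Sigma_p/C_p|=(p-1)!$. If $\sigma$ is neither the identity nor a $p$-cycle, no conjugate of $\sigma$ lies in $C_p$, so the value is $0$.

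The main step is the value at a $p$-cycle $\sigma$. The number of $g$ with $g^{-1}\sigma g\in C_p$ is $|C_{\Sigma_p}(\sigma)|\cdot\#\{\text{$p$-cycles in }C_p\}=p\cdot(p-1)$, since the centralizer of a $p$-cycle is the cyclic group it generates. Dividing by $|C_p|=p$ gives $\chi(\sigma)=p-1$. Equivalently, the cosets of $C_p$ fixed by $\sigma$ correspond to $N_{\Sigma_p}(C_p)/C_p\cong\FF_p^\times$, which has $p-1$ elements.

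Assembling these values, the right-hand side has character $\frac{1}{p-1}\big((p-1)!-p!\big)=-(p-1)!$ at the identity, $\frac{1}{p-1}(p-1)=1$ at $p$-cycles, and $0$ otherwise. This matches \Cref{delta_character} exactly.

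Finally, we must justify that the expression lies in the representation ring with integer coefficients. This follows because its character is the character of the integral virtual representation $V_p$, and characters determine virtual representations, for example after tensoring with $\QQ$, where the division by $p-1$ makes sense.
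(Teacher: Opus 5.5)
Your proof is correct and is essentially the paper's argument: you compute the characters of the permutation representations $\CC[\Sigma_p/C_p]$ and $\CC[\Sigma_p]$, where the value $p-1$ at a $p$-cycle comes from $|N_{\Sigma_p}(C_p)/C_p|$, and compare with \Cref{delta_character}. Your closing remark makes explicit a point the paper leaves implicit: the identity is to be read after tensoring with $\QQ$, or equivalently as $(p-1)V_p=\CC[\Sigma_p/C_p]-\CC[\Sigma_p]$ in the torsion-free representation ring.
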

\begin{proof}
The character of $\CC[\Sigma_p/C_p]$ is
\[\chi_{\CC[\Sigma_p/C_p]}\colon[\sigma]\longmapsto\begin{cases}|N_{\Sigma_p}(C_p)/C_p|=p-1&\sigma\text{ is a }p\text{-cycle}\\
(p-1)!&\sigma\text{ is the identity}\\
0&\text{otherwise}\end{cases}\]
The result follows by comparing the characters of both sides.
\end{proof}
\begin{rem}
   Let $\rho$ be a nontrivial irreducible representation of $C_p$. Then a character computation shows that $-V_p$ is also equal to the representation $\Ind_{C_p}^{\Sigma_p}\rho$.
\end{rem}

\begin{table}[htbp]
    \centering
    \caption{$\lambda$-ring operations, their symmetric group representations, and their symmetric functions}
    \label{tab:my_table}
    
    \renewcommand{\arraystretch}{1.5} 
    \setlength{\tabcolsep}{12pt}      
    
    \begin{tabular}{c|c|c}
        
        Operation &(Virtual) $\Sigma_n$-Representation & Symmetric Function \\
        \hline
        $x^n$ & $\CC[\Sigma_n]$ & $(x_1+x_2+\ldots)^n$ \\
        \hline
        $\sigma^n$& triv & $h_n$ \\
        \hline
        $\lambda^n$ & sign & $e_n$ \\
        \hline
        $\delta_p$ & $V_p$&$\frac{(x_1^p+x_2^p+\ldots)-(x_1+x_2+\ldots)^p}{p}$\\
        \hline
        $\psi_p$& $\CC[\Sigma_p]+pV_p$&$x_1^p+x_2^p+\ldots$
    \end{tabular}
\end{table}
 
\subsection{The Semiadditive Power Operation $\theta$}
\label{theta_details}
In this subsection we recall the operation $\theta$ and study its properties.

\begin{defn}
\label{theta}
    Let $\cC\in\CAlg(\Cat_\st)$ be $p$-complete and $1$-semiadditive. For $R\in\CAlg(\cC)$, define the operation
    \[\mdef{\theta\colon\underline R\too\underline R}\]
 by the formula
    \[\theta(x):=\frac{1}{p|BC_p|-1}\bigg(-|BC_p|\cdot x^p+\int_{BC_p}x^p\bigg).\]
    Note that the denominator $p|BC_p|-1$ is a unit, since $\cC$ is $p$-complete.
\end{defn}
\begin{rem}
    This semiadditive description of the operation $\theta$ appears in \cite[Proposition~5.4]{gwitt}, where the authors also verify that it is the operation defined by Hopkins in \cite{hopkins}. Note that their formula assumes $p|BC_p|=p$, which holds in chromatic height $1$ by explicit computation. The formula of \Cref{theta} is the correct one in general.
\end{rem}
\begin{rem}
    By a `partition of unity' in the $p$-complete Burnside ring of $\Sigma_p$ (\cite[Theorem~5.2.5]{yifan}), we have
    \[\int_{B\Sigma_p}x^p=\frac{1-(p-2)!}{p!}x^p+\frac{1}{p-1}\int_{BC_p}x^p.\]
    It follows that $\theta$ can be expressed in terms of integration over $B\Sigma_p$ as follows:
    \[\theta(x)=\frac{(p-1)!}{p!|B\Sigma_p|-1}\bigg(-|B\Sigma_p|\cdot x^p+\int_{B\Sigma_p}x^p\bigg).\]
\end{rem}

\begin{lem}
\label{theta_additive}
    The operation $\theta$ defines an additive $p$-derivation on $\pi_0R$.
\end{lem}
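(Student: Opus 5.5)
We must verify two identities for $\theta$ on $\pi_0R$: normalization $\theta(0)=\theta(1)=0$ and the additivity formula (1) of \Cref{pderivation}. Both follow from the definition
\[\theta(x)=\frac{1}{p|BC_p|-1}\Big(-|BC_p|\,x^p+\alpha(x)\Big),\qquad \alpha(x):=\int_{BC_p}x^p,\]
together with the known behaviour of $\alpha$ from \Cref{alpha}.

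\textbf{Normalization.} Since $\mathcal P_p$ is multiplicative, $\mathcal P_p(0)=0$ and hence $\theta(0)=0$. For $x=1$, the class $\mathcal P_p(1)\in\pi_0R^{BC_p}$ is the constant function $1$. Therefore $\alpha(1)=\int_{BC_p}1=|BC_p|$, by the definition of cardinality as the span $\pt\leftarrow BC_p\to\pt$. It follows that
\[\theta(1)=\frac{-|BC_p|+|BC_p|}{p|BC_p|-1}=0.\]

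\textbf{Additivity.} We use the identity of \cite{teleambi} recalled in \Cref{alpha}:
\[\alpha(x+y)=\alpha(x)+\alpha(y)+\frac{(x+y)^p-x^p-y^p}{p}.\]
The last term is the integer polynomial $c(x,y)=\sum_{0<i<p}\frac1p\binom pi x^iy^{p-i}$, which makes sense in $\pi_0R$. Structurally, $(x+y)^p$ restricted to $BC_p$ decomposes into the diagonal terms $x^p$ and $y^p$, plus free $C_p$-orbits of mixed monomials, and integrating a free orbit recovers the monomial itself.

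Setting $u=p|BC_p|-1$, we compute
\[
u\big(\theta(x+y)-\theta(x)-\theta(y)\big) = -|BC_p|\big((x+y)^p-x^p-y^p\big)+c(x,y) = -|BC_p|\,p\,c(x,y)+c(x,y) = -u\,c(x,y).
\]
Dividing by the unit $u$ gives
\[\theta(x+y)=\theta(x)+\theta(y)+\frac{x^p+y^p-(x+y)^p}{p},\]
which is the required formula.

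\textbf{Main obstacle.} The one nonformal input is the $\alpha$-formula of \Cref{alpha}, which holds in any $1$-semiadditive stable symmetric monoidal setting. The key step there is that the integral over $BC_p$ of the induced class $\mathrm{Ind}_e^{C_p}(z)$ equals $z$; this is Fubini applied to $\pt\to BC_p\to\pt$. If this is not taken as given, the proof would redo the orbit decomposition of $(x+y)^{\otimes p}$, which is where the real work lies.
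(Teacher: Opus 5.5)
Your proof is correct and follows the paper's route: the paper says normalization is immediate and derives additivity from the additivity formula for $\alpha$ in \Cref{alpha}, and your computation with $u=p|BC_p|-1$ writes out that deduction explicitly. One small fix: multiplicativity of $\mathcal P_p$ alone does not force $\mathcal P_p(0)=0$, but $\theta(0)=0$ still follows from your additivity formula with $x=y=0$ (or from the fact that the zero map factors through the zero object).
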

\begin{proof}
    The normalization condition of \Cref{pderivation} is immediate. The additivity property of $\theta$ follows from the additivity property of the operation $\alpha$ (\Cref{alpha}). 
\end{proof}

\subsubsection{Multiplicativity of $\theta$}
Fix $R\in\CAlg(\cC)$. We now consider when $\theta$ satisfies the multiplicativity identity of \Cref{pderivation}. Let 
\[e\colon\pt\to B\Sigma_p, \ \ q\colon B\Sigma_p\to\pt,\]
and define $\mdef{\overline\theta\colon R^{B\Sigma_p}\to R}$ by the formula
\[\overline\theta=\frac{(p-1)!}{p!|B\Sigma_p|-1}\big(-|B\Sigma_p|e^*+q_*\big).\]
By construction, we have that $\theta=\overline\theta\circ\mathcal P_p.$
\begin{lem}
\label{splitting}
   Let $\cC\in\CAlg(\Cat^{1\textup{-}\oplus})$ be $p$-complete. For $R\in \CAlg(\cC)$, the map
    \[e_*\oplus q^*\colon R\oplus R\too R^{B\Sigma_p}\]
    exhibits $R\oplus R$ as a retract of $R^{B\Sigma_p}$.
\end{lem}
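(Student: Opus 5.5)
To show $e_*\oplus q^*$ exhibits $R\oplus R$ as a retract, I will write down an explicit left inverse $R^{B\Sigma_p}\to R\oplus R$ built from $e^*$ and $q_*$, the other natural maps available. Its components will be linear combinations of $e^*$ and $q_*$ with coefficients in the ring generated by $|B\Sigma_p|$ and integers. The composite $(e^*\oplus q_*)\circ(e_*\oplus q^*)$ is the $2\times 2$ matrix
\[
M=\begin{bmatrix} e^*e_* & e^*q^* \\ q_*e_* & q_*q^* \end{bmatrix},
\]
and it suffices to show $M$ is invertible as an endomorphism of $R\oplus R$. Then $M^{-1}\circ(e^*\oplus q_*)$ is the desired retraction.

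Next I compute the entries using the semiadditive calculus of Section~\ref{higher_comm_monoids}.
\begin{itemize}
\item $e^*q^*=(qe)^*=\id$, since $qe=\id_{\pt}$.
\item $q_*e_*=(qe)_*=\id$.
\item $q_*q^*=|B\Sigma_p|$, by definition of cardinality via the span $\pt\leftarrow B\Sigma_p\to\pt$.
\item $e^*e_*$ is integration along the fibers of the pullback of $e$ along itself. The fiber $\pt\times_{B\Sigma_p}\pt\simeq\Sigma_p$ is a finite set of size $p!$, so by base change (Beck--Chevalley for the ambidextrous pushforward) $e^*e_*=|\Sigma_p|=p!$.
\end{itemize}
Hence
\[
M=\begin{bmatrix} p! & 1\\ 1 & |B\Sigma_p|\end{bmatrix},\qquad \det M=p!\,|B\Sigma_p|-1.
\]
This determinant is computed in the commutative ring $\pi_0\End(\one_\cC)$, where all entries live, and it acts on $R$ through the unit.

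It remains to show $p!\,|B\Sigma_p|-1$ is a unit. This is where $p$-completeness enters, and it matches the denominator already appearing in the definition of $\overline\theta$.

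\begin{itemize}
\item First I would use the identity of the preceding remark, $\int_{B\Sigma_p}=\frac{1-(p-2)!}{p!}+\frac{1}{p-1}\int_{BC_p}$ in the $p$-complete Burnside ring. Applied to $x=1$, it expresses $p!\,|B\Sigma_p|$ in terms of $p|BC_p|$, using $|BC_p|$ with $p$ inverted where legitimate.
\item Alternatively, argue directly: $p!\,|B\Sigma_p|=(p-1)!\cdot p|B\Sigma_p|$. It then suffices that $p|B\Sigma_p|$ is topologically nilpotent modulo units. This follows from the fact that $p\,|B\Sigma_p|$ lies in the ideal generated by $p$ after $p$-completion, or by comparing with $p|BC_p|$ via the transfer $BC_p\to B\Sigma_p$ of degree prime to $p$.
\end{itemize}
In a $p$-complete ring, $1+(\text{element of }p\text{-adic ideal})$ is a unit. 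Equivalently, $x-1$ is a unit when $x$ is divisible by $p$, and $p!\,|B\Sigma_p|$ has the factor $p$. Since the paper already asserts this unit property implicitly in defining $\overline\theta$, I would state it and give this short justification.

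The main obstacle is making the base-change computation $e^*e_*=p!$ rigorous in the $1$-semiadditive setting, and ensuring the matrix manipulation is legitimate for maps of $R$-modules. For the latter, all entries are $R$-linear, coming from the higher commutative monoid structure, so invertibility of $M$ over the commutative ring $\pi_0\End(\one)$ suffices.
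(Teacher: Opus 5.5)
Your proposal is correct and is essentially the paper's proof: compose with $e^*\oplus q_*$, compute the $2\times 2$ matrix of cardinalities with determinant $p!\,|B\Sigma_p|-1$, and invoke $p$-completeness. Your entry $q_*q^*=|B\Sigma_p|$ is the right one and is what the paper's stated determinant requires, even though the paper's displayed matrix prints $p|B\Sigma_p|$ there. For the unit claim only your last line is needed, namely $p!\,|B\Sigma_p|-1=-\bigl(1-p\cdot(p-1)!\,|B\Sigma_p|\bigr)$ with $p$ in the Jacobson radical of the derived $p$-complete ring $\pi_0\mathrm{End}(\one_\cC)$, so you can drop the detours through the Burnside-ring identity and the transfer from $BC_p$.
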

\begin{proof}
    The composition
\[\begin{tikzcd}[ampersand replacement=\&,column sep =large]
	{ R\oplus R} \& { R^{B\Sigma_p}} \& { R\oplus R}
	\arrow["{e_*\oplus q^*}", from=1-1, to=1-2]
	\arrow["{e^*\oplus q_*}", from=1-2, to=1-3]
\end{tikzcd}\]
    is given by a matrix with determinant
    \[\det\begin{bmatrix}p!&1\\
    1&p|B\Sigma_p|\end{bmatrix}=p!|B\Sigma_p|-1.\]
Since $\cC$ is $p$-complete, this matrix is invertible.
\end{proof}
\begin{proposition}
\label{degenerate}
    Let $\cC\in\CAlg(\Cat_\st^{1\textup{-}\oplus})$ be $p$-complete. Let $f,g\in\pi_* R^{B\Sigma_p}$, and suppose $f$ is contained in the retract $R\oplus R\subset R^{B\Sigma_p}$ (\Cref{splitting}). Then    \[\overline\theta(fg)=e^*(f)\overline\theta(g)+\overline\theta(f)e^*(g)+p\overline\theta(f)\overline\theta(g).\]
\end{proposition}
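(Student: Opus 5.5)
By linearity in $f$, it suffices to treat the two summands of the retract separately, that is, $f=e_*(a)$ and $f=q^*(a)$ with $a\in\pi_*R$. Both sides of the claimed identity are additive in $f$ for fixed $g$. The left side $\overline\theta(fg)$ is linear in $f$ because $\overline\theta$ is a linear map of $R$-modules $R^{B\Sigma_p}\to R$, being a combination of $e^*$ and $q_*$. The right side is visibly linear in $f$. So the argument reduces to two formal computations, each using only the projection formula and homogeneity of the semiadditive integral (\Cref{dirac} and its proof).

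\emph{Case $f=q^*(a)$.} Here $fg=a\cdot g$, where $a$ acts through the $R$-module structure. Since $e^*$ and $q_*$ are $R$-linear, $\overline\theta(fg)=a\,\overline\theta(g)$. On the right side we have $e^*(f)=a$, and
\[\overline\theta(q^*a)=\tfrac{(p-1)!}{p!|B\Sigma_p|-1}\bigl(-|B\Sigma_p|a+p|B\Sigma_p|a\bigr)\cdot\ldots\]
This must be computed carefully: $q_*q^*a=|B\Sigma_p|\,a$ and $e^*q^*a=a$. That gives $\overline\theta(q^*a)=\tfrac{(p-1)!}{p!|B\Sigma_p|-1}\bigl(-|B\Sigma_p|+|B\Sigma_p|\bigr)a=0$. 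The right side is therefore $a\overline\theta(g)+0+0$, and the two sides agree.

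\emph{Case $f=e_*(a)$.} The projection formula gives $e_*(a)\cdot g=e_*(a\cdot e^*g)$. We then use $e^*e_*=|\Sigma_p|=p!$ (the fiber of $e$ over the basepoint is $\Sigma_p$) and $q_*e_*=\mathrm{id}$. These give
\[\overline\theta(e_*b)=\tfrac{(p-1)!}{p!|B\Sigma_p|-1}\bigl(1-p!|B\Sigma_p|\bigr)b=-(p-1)!\,b.\]
Writing $c=-(p-1)!$, the left side is $c\,a\,e^*(g)$. On the right, $e^*(f)=p!\,a$ and $\overline\theta(f)=ca$, so the right side is
\[p!\,a\,\overline\theta(g)+ca\,e^*(g)+pca\,\overline\theta(g)=ca\,e^*(g)+\bigl(p!-p(p-1)!\bigr)a\overline\theta(g)=ca\,e^*(g).\]
This matches the left side.

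The main subtlety is the bookkeeping of the basic identities $e^*e_*=p!$, $q_*e_*=1$, $e^*q^*=1$, $q_*q^*=|B\Sigma_p|$, together with the projection formula $e_*(a)\cdot g=e_*(a\,e^*g)$ on graded homotopy. The identities are the same matrix entries used in \Cref{splitting}, and the projection formula follows from homogeneity as in the proof of \Cref{dirac}. Beyond this, one only needs to note that the signs in the graded setting cause no trouble, since $e_*$, $q^*$ and $\overline\theta$ are all $R$-module maps.
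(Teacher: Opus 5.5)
Your proposal is correct and follows the paper's proof essentially step for step. You reduce by linearity in $f$ to $f=e_*(a)$ and $f=q^*(a)$, compute $\overline\theta(e_*a)=-(p-1)!\,a$ and $\overline\theta(q^*a)=0$, and evaluate $\overline\theta(fg)$ via the projection formula underlying \Cref{dirac}; the paper instead uses multiplicativity of $e^*$ together with \Cref{dirac}, which amounts to the same computation. Delete the abandoned first displayed line in the $q^*$ case, since $q_*q^*=|B\Sigma_p|$, not $p|B\Sigma_p|$.
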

\begin{proof}
Define the function
    \begin{align*}
        \langle-,-\rangle_\theta\colon\pi_* R^{B\Sigma_p}\times\pi_* R^{B\Sigma_p}\too\pi_* R
    \end{align*}
    by the formula
    \[\langle f,g\rangle_\theta=\overline\theta(fg)-e^*(f)\overline\theta(g)-\overline\theta(f)e^*(g)-p\overline\theta(f)\overline\theta(g).\]
    On $\pi_0$, the function $\langle-,-\rangle_\theta$ measures the failure of $\theta$ to satisfy the multiplicativity identity of a $p$-derivation. Since it is linear in both variables, to prove the claim it is enough to show $\langle f,g\rangle_\theta=0$ when $f$ is in the image of $e_*$ or $q^*$.
    \begin{enumerate}
        \item If $f=e_*(x)$ for $x\in\pi_* R$, then
        \begin{align*}
            \overline\theta(e_*(x))&=\frac{(p-1)!}{p!|B\Sigma_p|-1}\big(|B\Sigma_p|e^*(e_*(x))-q_*(e_*(x))\big)\\
            &=-x(p-1)!.
        \end{align*}
        For any $g\in\pi_* R^{B\Sigma_p}$ we also have
        \begin{align*}        
        \overline\theta(e_*(x)g)&=\frac{(p-1)!}{p!|B\Sigma_p|-1}\big(|B\Sigma_p|e^*(e_*(x)g)-q_*(e_*(x)g)\big)\\
        &=-xe^*(g)(p-1)!
        \end{align*}
        because $e^*$ takes products to products and because
        \[q_*(e_*(x)g)=xe^*(g)\]
       by \Cref{dirac}.
        \item If $f=q^*(x)$ for $x\in\pi_* R$, then
        \begin{align*}
            \overline\theta(q^*(x))&=\frac{(p-1)!}{p!|B\Sigma_p|-1}\big(|B\Sigma_p|e^*(q^*(x))-q_*(q^*(x))\big)\\
            &=0.
        \end{align*}
        For any $g\in\pi_* R^{B\Sigma_p}$, we also have $\overline\theta(q^*(x)g)=x\overline\theta(g)$ because $\overline\theta$ is linear.
    \end{enumerate}
    With the above, it becomes a straightforward verification that $\langle f,g\rangle_\theta=0$ when either $f=e_*(x)$ or $f=q^*(x)$.
\end{proof}

We now relate the retract of \Cref{splitting} to the virtual orientability of $R$. To do so, we first recall the following fact.
\begin{proposition}[{\cite[Theorem B]{fusion}}]
   The inclusion $C_p\rtimes\FF_p^\times\subset\Sigma_p$ induces an equivalence
    \[B(C_p\rtimes\FF_p^\times)_p^\wedge\iso  (B\Sigma_p)_p^\wedge\]
    on classifying spaces upon $p$-completion.
\end{proposition}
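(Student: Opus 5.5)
The plan is to reduce the claim to a statement about mod $p$ cohomology and then use the fact that Bousfield--Kan $p$-completion inverts $\FF_p$-homology equivalences. Write $N:=C_p\rtimes\FF_p^\times$. This is exactly the normalizer $N_{\Sigma_p}(C_p)$ of the cyclic subgroup generated by a $p$-cycle. Since $p^2\nmid p!$, the subgroup $C_p$ is a Sylow $p$-subgroup of $\Sigma_p$. It therefore suffices to show that the map $BN\to B\Sigma_p$ induces an isomorphism on $H^*(-;\FF_p)$. Indeed, a map of spaces inducing an isomorphism on mod $p$ (co)homology, which for these finite-type spaces is the same as on mod $p$ homology, becomes an equivalence after $p$-completion.

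The cohomology computation proceeds in three steps.

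\begin{enumerate}
\item Both groups contain $C_p$ with index prime to $p$: the index is $p-1$ in $N$ and $(p-1)!$ in $\Sigma_p$. The standard transfer argument ($\mathrm{res}\circ\mathrm{tr}$ is multiplication by the index) therefore shows that restriction embeds $H^*(B\Sigma_p;\FF_p)$ and $H^*(BN;\FF_p)$ into $H^*(BC_p;\FF_p)$.
\item The image for $N$ is exactly the invariants $H^*(BC_p;\FF_p)^{\FF_p^\times}$. This follows from the Lyndon--Hochschild--Serre spectral sequence, which collapses because $|\FF_p^\times|$ is prime to $p$.
\item The image for $\Sigma_p$ is the subring of \emph{stable elements} in the sense of Cartan--Eilenberg. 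Because the Sylow subgroup $C_p$ is abelian, Burnside's fusion theorem says that $N_{\Sigma_p}(C_p)$ controls fusion in $C_p$. Equivalently, by Swan's theorem, the stable elements are precisely the $N_{\Sigma_p}(C_p)$-invariants.
\end{enumerate}

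Together these give that the restriction
\[H^*(B\Sigma_p;\FF_p)\too H^*(BN;\FF_p)\]
is an isomorphism, both sides being identified with $H^*(BC_p;\FF_p)^{\FF_p^\times}$ compatibly with restriction.

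Finally, I would invoke the Bousfield--Kan result: a map of connected spaces that is an $\FF_p$-homology isomorphism induces an equivalence on $p$-completions. This yields $B(C_p\rtimes\FF_p^\times)_p^\wedge\iso(B\Sigma_p)_p^\wedge$.

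The main obstacle is step 3, the identification of the stable elements with the normalizer invariants. This is where the control of fusion by the normalizer of an abelian Sylow subgroup is essential. Here it can be checked by hand: two elements of $C_p$ that are conjugate in $\Sigma_p$ generate the same cyclic subgroup, and any conjugation between them can be adjusted into $N$. In the language of \cite{fusion}, the fusion systems of $N$ and $\Sigma_p$ on $C_p$ agree, and the statement is then the Broto--Levi--Oliver principle that the $p$-completed classifying space depends only on the $p$-local fusion data.
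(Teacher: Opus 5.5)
Your argument is correct. The paper does not prove this statement; it imports it from \cite{fusion}, a result from the fusion-systems literature. You instead give the classical self-contained argument:
\begin{enumerate}
\item transfer embeds both $H^*(B\Sigma_p;\FF_p)$ and $H^*(BN;\FF_p)$ in $H^*(BC_p;\FF_p)$;
\item the collapsing Lyndon--Hochschild--Serre spectral sequence identifies the image for $N$ with the $\FF_p^\times$-invariants;
\item Swan's theorem (stable elements plus Burnside's control of fusion for an abelian Sylow subgroup) identifies the image for $\Sigma_p$ with the same invariants;
\item Bousfield--Kan turns the resulting mod $p$ (co)homology isomorphism into an equivalence of $p$-completions. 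This works for either Bousfield--Kan completion or $H\FF_p$-localization, with no goodness hypothesis needed.
\end{enumerate}

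The fusion step is easier than you make it. For $g\in\Sigma_p$, the intersection $C_p\cap gC_pg^{-1}$ is either trivial or all of $C_p$, and in the latter case $g\in N$ already. So the stable-element condition is literally $N$-invariance, and no ``adjusting'' of conjugating elements is needed.

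Your route uses only classical group cohomology. It also works verbatim for any finite group $G$ with abelian Sylow $p$-subgroup $P$, giving $BN_G(P)^\wedge_p\simeq BG^\wedge_p$. The citation buys brevity and places the statement in the general framework where $p$-completed classifying spaces are governed by fusion data.

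Your closing appeal to Broto--Levi--Oliver is not load-bearing and can be dropped. The principle that $BG^\wedge_p$ is determined by abstract fusion data is far deeper than this case requires. Here the map $BN\to B\Sigma_p$ is induced by an inclusion of groups, so the stable-elements computation already suffices.
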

\begin{proposition}
\label{virtual}
    Let $\cC\in\CAlg(\Cat^{1\textup{-}\oplus})$. Then if $R\in\CAlg(\cC)$ is virtually orientable of height $1$, the inclusion of \Cref{splitting} is an isomorphism $ R\oplus R\simeq R^{B\Sigma_p}$.
\end{proposition}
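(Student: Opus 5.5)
The plan is to reduce the problem to the orientable case by faithful descent, and then compute $R^{B\Sigma_p}$ using the Fourier transform together with the equivalence $B(C_p\rtimes\FF_p^\times)_p^\wedge\simeq(B\Sigma_p)_p^\wedge$.

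\emph{Reduction to the orientable case.} By \Cref{splitting}, the composite $(e^*\oplus q_*)\circ(e_*\oplus q^*)$ is already an isomorphism, so $R\oplus R$ is a retract of $R^{B\Sigma_p}$. Write $R^{B\Sigma_p}\simeq R\oplus R\oplus Q$ for the complementary summand $Q$. The statement is then equivalent to $Q=0$. Here $Q$ is the image of an idempotent in $\pi_0\End_R(R^{B\Sigma_p})$, built naturally out of $e_*,e^*,q_*,q^*$ and the inverse of $p!|B\Sigma_p|-1$. Note that $p$-completeness is implicitly needed for \Cref{splitting}; I assume it here, as in that lemma.

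Choose a faithful orientable $R$-algebra $S$. Formation of $(-)^{B\Sigma_p}$ commutes with base change, since $B\Sigma_p$ is ambidextrous and by the projection formula (\cite[Corollary 2.34]{fourier}). Hence $S\otimes_R Q$ is the corresponding summand of $S^{B\Sigma_p}$. By faithfulness it therefore suffices to treat the case where $R$ admits an orientation $\omega$ of height $1$.

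\emph{The orientable case.} First, $R$ is $p$-complete, so $R^{(-)}$ inverts $p$-adic equivalences of $1$-finite spaces; at least it inverts those between $B$ of finite groups. I expect this to follow from $p$-completeness together with the fact that prime-to-$p$ cardinalities are invertible. Granting this, the fusion equivalence gives $R^{B\Sigma_p}\simeq R^{B(C_p\rtimes\FF_p^\times)}\simeq (R^{BC_p})^{h\FF_p^\times}$. Since $|\FF_p^\times|$ is invertible, taking $h\FF_p^\times$ is a direct summand operation.

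Next, the orientation gives an $\FF_p^\times$-equivariant Fourier isomorphism. Here $\FF_p^\times$ acts on the dual through inversion of characters, but the underlying $\FF_p^\times$-set is the same. The isomorphism is
\[R^{BC_p}\simeq R^{B\FF_p}\simeq R[\Sigma^{-1}\text{-dual}]\cong R[\FF_p]\cong R^{\FF_p}= \textstyle\prod_{a\in\FF_p}R,\]
where $R[M]$ for $M=\FF_p$ discrete is identified with $R^{\lfloor I^{(1)}\Sigma\FF_p\rfloor}$. More precisely, taking $M$ to be the $\FF_p$-module $\FF_p$ placed so that $\lfloor I^{(1)}M\rfloor\simeq B\FF_p$, the transform reads $R[\FF_p]\iso R^{BC_p}$.

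Now $R[\FF_p]=\bigoplus_{a\in\FF_p}R$ carries the permutation action of $\FF_p^\times$ on $\FF_p$. This set has two orbits, $\{0\}$ and $\FF_p^\times$, with the latter free. Therefore
\[(R[\FF_p])^{h\FF_p^\times}\simeq R\oplus R,\]
so $R^{B\Sigma_p}$ is a rank-two free $R$-module.

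\emph{Concluding, and the main obstacle.} We then have a retraction $R\oplus R\to R^{B\Sigma_p}\simeq R\oplus R$ with complement $Q$. Hence $R\oplus R\simeq R\oplus R\oplus Q$ as $R$-modules. It remains to show $Q=0$, and this is the main obstacle: cancellation of free modules is not automatic in general.

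My first attempt is to track the maps explicitly. Under the Fourier transform, $q^*$ should correspond to the unit at $0\in\FF_p$, and $e_*$ to the sum over all $a\in\FF_p$. Then $e_*\oplus q^*$ becomes, after taking fixed points, the matrix $\begin{bmatrix}1&1\\ 1&p-1\end{bmatrix}$ in the orbit basis (up to units), with determinant $p-2$. This mismatch with $p!|B\Sigma_p|-1$ suggests that the normalizations need adjusting, including $|BC_p|=1/p$ in the oriented case. The goal is to verify directly that the map is an isomorphism because its determinant is a unit after $p$-completion.

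As a fallback, I would argue that $\pi_0\End_R$ of both sides is the same matrix ring, in which the idempotent cutting out $Q$ must vanish by a trace/rank count computed after base change to a field-like quotient. That approach is less clean, so the explicit matrix computation is the preferred route.
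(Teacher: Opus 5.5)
Your skeleton matches the paper's. It uses faithful descent to the orientable case and the fusion equivalence $B(C_p\rtimes\FF_p^\times)^\wedge_p\simeq(B\Sigma_p)^\wedge_p$, which the paper also uses without further comment. It then applies the Fourier transform and the orbit decomposition of $\FF_p$ under $\FF_p^\times$. You are also right that $p$-completeness is tacitly needed, as in \Cref{splitting}.

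\textbf{The gap.} The proposal stops exactly where the content is: you never show that the specific map $e_*\oplus q^*$ is an isomorphism, and your attempt to do so is incorrect.

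\emph{What you got right.} Work with $R[C_p]\iso R^{BC_p}$, $\delta_a\mapsto\omega^a$. Your identifications of $q^*(1)$ with $\delta_0$, and of $e_*(1)$ with a multiple of $\sum_{a\in\FF_p}\delta_a$, are correct.

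\emph{Why your matrix is wrong.} In the orbit basis $\delta_0,\ \sum_{a\neq0}\delta_a$ of the fixed points, the $q^*$-column is $(1,0)$. So $\begin{bmatrix}1&1\\1&p-1\end{bmatrix}$ cannot be the matrix; its determinant $p-2$ even vanishes for $p=2$.

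\emph{The correct computation.} The correct multiple is $e_*(1)\mapsto(p-1)!\,|BC_p|^{-1}\sum_{a\in\FF_p}\delta_a$.
The factor $(p-1)!$ comes from base change, since $\pt\times_{B\Sigma_p}BC_p\simeq\Sigma_p/C_p$ has $(p-1)!$ points.
The factor $|BC_p|^{-1}$ comes from \Cref{dirac} together with orthogonality: $\int_{BC_p}\omega^c=0$ for $c\neq0$, which follows from the orientation.
The determinant is then $-(p-1)!\,|BC_p|^{-1}$, a unit.

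\emph{The normalization.} Your guess $|BC_p|=1/p$ is impossible. A nonzero $R$ of height $1$ is $p$-complete, so $p$ is not a unit, while $|BC_p|$ is a unit; in $\Sp_{T(1)}$ it equals $1$.

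The paper does this computation with the transform $R[BC_p]\to R^{C_p}$ instead. It obtains $\begin{bmatrix}(p-1)!&|BC_p|\\(p-1)(p-1)!&0\end{bmatrix}$, with determinant $\pm(p-1)(p-1)!\,|BC_p|$.

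\textbf{A shorter fix on your own route.} The ``cancellation'' obstacle you single out does not exist, and noticing this closes your argument without computing any entries.
All maps involved are $R$-linear: $e_*,e^*,q_*,q^*$, restriction, the Fourier transform, and fixed points. Moreover $\pi_0\operatorname{End}_{\Mod_R(\cC)}(R\oplus R)\cong M_2(\pi_0R)$, with $\pi_0R$ commutative.
Let $\phi\colon R^{B\Sigma_p}\simeq R\oplus R$ be your identification and $r$ the retraction of \Cref{splitting}. Then $A=\phi\circ(e_*\oplus q^*)$ and $B=r\circ\phi^{-1}$ satisfy $BA=I$ in $M_2(\pi_0R)$.
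Hence $\det A\cdot\det B=1$, so $A$ is invertible, and therefore so is $e_*\oplus q^*$.

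This is shorter than the paper's route. The explicit matrix has the merit of showing exactly where the height-$1$ input enters, namely that $|BC_p|$ is a unit.
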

\begin{proof}
 
    We can check that
    $e_*\oplus q^*\colon  R\oplus R\to R^{B\Sigma_p}$
    is an isomorphism after faithful extension of scalars, so we assume that $\cC$ is orientable with invertible Fourier transform $\mathfrak F_\omega$. Unwinding definitions, the composition
    \begin{align*}
         R\oplus R\too R^{B\Sigma_p}&\iso ( R^{BC_p})^{h\FF_p^\times}\\
        &\iso ( R[BC_p])_{h\FF_p}\\
        &\xlongrightarrow{\mathfrak F_\omega}( R^{C_p})_{h\FF_p^\times}\\
        &\iso  R[B\FF_p^\times]\oplus R\\
        &\iso  R\oplus R
    \end{align*}
    is given by a matrix with determinant
    \[\det\begin{bmatrix}
        (p-1)!&|BC_p|\\
        (p-1)(p-1)!&0
    \end{bmatrix}=(p-1)(p-1)!|BC_p|.\]
    Since $R$ has semiadditive height 1, $|BC_p|$ is a unit and this matrix is invertible. Therefore $e_*\oplus q^*$ is an isomorphism.
\end{proof}
\begin{rem}
    \label{basis}
   The composition
\[\begin{tikzcd}[ampersand replacement=\&]
	{ R\oplus R} \& { R^{B\Sigma_p}} \& { R\oplus R}
	\arrow["{e_*\oplus q^*}", from=1-1, to=1-2]
	\arrow["{e^*\oplus\overline\theta}", from=1-2, to=1-3]
\end{tikzcd}\]
is similarly given by an invertible matrix. Since the first map is an isomorphism, so is the second.
\end{rem}
We deduce the following:
\begin{theorem}
\label{theoremAproof}
   Let $\cC\in\CAlg(\Cat_\st^{1\textup{-}\oplus})$. If $R\in\CAlg(\cC)$ is virtually orientable of height $1$, then $\theta:\pi_0R\to\pi_0R$ is a $p$-derivation.
\end{theorem}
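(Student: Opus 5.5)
By \Cref{theta_additive}, $\theta$ is already an additive $p$-derivation on $\pi_0R$. Normalization $\theta(0)=0$ follows from $\mathcal P_p(0)=0$ and linearity of $\overline\theta$. So only the multiplicativity identity
\[\theta(xy)=\theta(x)y^p+x^p\theta(y)+p\theta(x)\theta(y)\]
remains. The plan is to reduce it to \Cref{degenerate} using the decomposition of \Cref{virtual}.

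First, since $\theta=\overline\theta\circ\mathcal P_p$ and the total power operation is multiplicative, we have $\mathcal P_p(xy)=\mathcal P_p(x)\mathcal P_p(y)$ in $\pi_0R^{B\Sigma_p}$. Hence
\[\theta(xy)=\overline\theta\big(\mathcal P_p(x)\mathcal P_p(y)\big).\]
Second, restricting the equivariant power along $e\colon\pt\to B\Sigma_p$ recovers the ordinary power, so $e^*\mathcal P_p(x)=x^p$. This restriction is the composite $\underline R\to\underline R^{B\Sigma_p}\xrightarrow{e^*}\underline R$, which is just the $p$-fold tensor power followed by multiplication.

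Third, I invoke \Cref{virtual}. Because $R$ is virtually orientable of height $1$, the map $e_*\oplus q^*\colon R\oplus R\to R^{B\Sigma_p}$ is an isomorphism. In particular every class $f=\mathcal P_p(x)\in\pi_0R^{B\Sigma_p}$ lies in the retract $R\oplus R$. We may then apply \Cref{degenerate} with $f=\mathcal P_p(x)$ and $g=\mathcal P_p(y)$, which gives
\[\overline\theta(fg)=e^*(f)\overline\theta(g)+\overline\theta(f)e^*(g)+p\overline\theta(f)\overline\theta(g)=x^p\theta(y)+\theta(x)y^p+p\theta(x)\theta(y).\]
This is exactly the multiplicativity identity.

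The hypotheses need a small check. \Cref{degenerate} and \Cref{splitting} assume $\cC$ is $p$-complete, and the definition of $\theta$ needs $p|BC_p|-1$ and $p!|B\Sigma_p|-1$ to be invertible. I expect this to be the main subtlety. If $\cC$ is not assumed $p$-complete, I would instead use that $R$ has semiadditive height $1$, which follows from virtual orientability as used in \Cref{virtual}. Then $|BC_p|$ is invertible on $R$, and the relevant determinants are units on $R$ in the $p$-complete setting; alternatively one can pass to $\Mod_R$ and work $p$-completely there. The remaining computations in \Cref{degenerate} are purely linear-algebraic and go through once these units exist. So beyond this bookkeeping about where the denominators are invertible, the proof is a direct combination of \Cref{virtual} and \Cref{degenerate}.
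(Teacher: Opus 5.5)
Your proposal is correct and is essentially the paper's own proof: additivity and normalization come from \Cref{theta_additive}, and multiplicativity follows by applying \Cref{degenerate} to $f=\mathcal P_p(x)$ and $g=\mathcal P_p(y)$, which lie in the retract because \Cref{virtual} makes $e_*\oplus q^*$ an isomorphism, together with $\mathcal P_p(xy)=\mathcal P_p(x)\mathcal P_p(y)$ and $e^*\mathcal P_p(x)=x^p$. Your caveat about $p$-completeness points to a hypothesis the paper also leaves implicit (it is already needed to define $\theta$); it is harmless, because semiadditive height $1$ includes $|C_p|=p$-completeness, and that makes $p!|B\Sigma_p|-1$ and $p|BC_p|-1$ invertible on $R$.
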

\begin{proof}
The operation $\theta$ is always an additive $p$-derivation (\Cref{theta_additive}), so it only remains to verify the multiplicativity identity. The retract of \Cref{splitting} is an equivalence by \Cref{virtual}, so for every pair $x,y\in\pi_0R$, we have
\begin{align*}
    \underbrace{\overline\theta(\mathcal P_p(x)\mathcal P_p(y))}_{\overline\theta(\mathcal P_p(xy))}-\overline\theta(\mathcal P_p(x))y^p-x^p\overline\theta(\mathcal P_p(y))-p\overline\theta(\mathcal P_p(x))\overline\theta(\mathcal P_p(y))=0
\end{align*}
by \Cref{degenerate}. Since $\theta(-)=\overline\theta(\mathcal P_p(-))$, the result follows.
\end{proof}

\subsection{Classifying Power Operations in Semiadditive Height $1$}
\label{proof}
We briefly introduce the theory of power operations on commutative algebras in a symmetric monoidal category $\cC\in\CAlg(\Pr_\st)$ (see \cite{powers}). The free commutative algebra on an object $X\in\cC$ is given by
\[\Free^{\mathbb E_\infty}_{\cC}(X)=\bigoplus_{n\ge0}X^{\otimes n}_{h\Sigma_n}.\]
We denote $\mdef{\Free^{\mathbb E_\infty}_{\cC}(X)^{\wt=n}:=X^{\otimes n}_{h\Sigma_n}}$. For a set $T$, we denote
$\mdef{\Free^{\mathbb E_\infty}_{\cC}(T):=\Free^{\mathbb E_\infty}_{\cC}(\one_\cC^{\oplus T})}.$ 

Since $\cC$ is stable, every object $X\in\cC$ has an \textit{underlying spectrum} $\textup{map}_\cC(\one_\cC,X)$, so that $X$ has $\ZZ$-graded homotopy groups. If $R\in\CAlg(\cC)$ then $\pi_*R$ is a graded-commutative ring.
\begin{defn}
    A \mdef{\textit{power operation of degree $k$}} on commutative algebras in $\cC$ is a natural transformation $\pi_0\Rightarrow \pi_k$ of functors $\CAlg(\cC)\too\Set$.\footnote{We restrict our attention to operations $\pi_0\Rightarrow\pi_k$, although one could similarly consider operations that act on classes in nonzero degrees.}
    \end{defn}

      The following is an immediate consequence of the Yoneda lemma:
    \begin{proposition}[{\cite[Corollary 4.12]{powers}}]
        Power operations of degree $k$ are in natural bijection with $\pi_k\Free^{\mathbb E_\infty}_{\cC}(t)$.
    \end{proposition}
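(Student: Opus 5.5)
This is a direct Yoneda argument, following \cite[Corollary 4.12]{powers}. First I identify the functor $\pi_k\colon\CAlg(\cC)\to\Set$ as corepresented, up to a shift, by the free algebra. For any $S\in\CAlg(\cC)$, the free--forgetful adjunction $\Free^{\mathbb E_\infty}_\cC\dashv U$ gives natural equivalences
\[\Map_{\CAlg(\cC)}(\Free^{\mathbb E_\infty}_\cC(t),S)\simeq\Map_\cC(\one_\cC,S)=\underline S,\]
so $\pi_0S\cong\pi_0\Map_{\CAlg(\cC)}(\Free^{\mathbb E_\infty}_\cC(t),S)$. Here $\Free^{\mathbb E_\infty}_\cC(t)=\Free^{\mathbb E_\infty}_\cC(\one_\cC)$. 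Thus $\pi_0$ is corepresented on the homotopy category $h\CAlg(\cC)$ by $\Free^{\mathbb E_\infty}_\cC(t)$. This is the key point: a power operation is a natural transformation of functors on $\CAlg(\cC)$, and such functors factor through the homotopy category because $\pi_0,\pi_k$ do.

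Next I apply the ordinary Yoneda lemma in $h\CAlg(\cC)$. Natural transformations $\pi_0\Rightarrow\pi_k$ are then in bijection with $\pi_k$ of the corepresenting object, namely $\pi_k\Free^{\mathbb E_\infty}_\cC(t)$. Explicitly, an operation $\phi$ goes to $\phi(t)$, where $t\in\pi_0\Free^{\mathbb E_\infty}_\cC(t)$ is the tautological class. Conversely, a class $c\in\pi_k\Free^{\mathbb E_\infty}_\cC(t)$ determines the operation $x\mapsto \bar x_*(c)$, where $\bar x\colon\Free^{\mathbb E_\infty}_\cC(t)\to S$ is the algebra map classifying $x$.

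It remains to check that the two assignments are mutually inverse, which is the standard Yoneda verification. Naturality of $\phi$ along $\bar x$ gives $\phi(x)=\phi(\bar x_*t)=\bar x_*\phi(t)$. In the other direction, the operation built from $c$ sends $t$ to $\mathrm{id}_*(c)=c$.

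The only subtle step is that $\bar x$ is well defined only up to homotopy. The map $\bar x_*$ on $\pi_k$ depends only on the homotopy class of $\bar x$, which in turn depends only on $x\in\pi_0S$, so the construction is well defined.
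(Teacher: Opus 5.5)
Your proposal is correct and follows the paper's approach. The paper gives no separate argument: it calls the statement an immediate consequence of the Yoneda lemma and cites \cite[Corollary 4.12]{powers}. Your version spells that argument out: corepresent $\pi_0$ on $h\CAlg(\cC)$ by $\Free^{\mathbb E_\infty}_\cC(t)$ via the free--forgetful adjunction, then apply the $1$-categorical Yoneda lemma, using that $\Set$-valued functors on $\CAlg(\cC)$ factor through the homotopy category.
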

\begin{rem}
   More generally, $\pi_k\Free^{\mathbb E_\infty}_{\cC}(T)$ classifies $T$-ary operations
    \[\underbrace{\pi_0R\times\ldots\times\pi_0R}_{T\textup{ times}}\too\pi_kR\]
    that are natural in $R\in\CAlg(\cC)$.
\end{rem}
\begin{defn}
\label{weight}
    If a class $\alpha\in\pi_*\Free^{\mathbb E_\infty}_{\cC}(T)$ lies in the summand $\pi_*(\Free^{\mathbb E_\infty}_{\cC}(T)^{\wt=n})$, then we say $\alpha$ is an operation of \mdef{\textit{weight $n$.}}
\end{defn}
If the functor $\pi_*$ commutes with infinite direct sums (e.g. if $\one_\cC$ is compact), then every class in $\pi_*\Free^{\mathbb E_\infty}_{\cC}(t)$ decomposes into a finite sum of operations of varying weight (\cite[Remark 4.13]{powers}). In the general case we restrict attention to operations with only finitely many nonzero weights.
\begin{defn}\label{uncompleted_operations}The subring\[\bigoplus_{n\ge0}\pi_*\Free^{\mathbb E_\infty}_{\cC}(T)^{\wt=n}\subset\pi_*\Free^{\mathbb E_\infty}_{\cC}(T)\]
    is the ring of \textit{\mdef{bounded weight \textit{($T$-ary) power operations}}}.
    \end{defn}
    \subsubsection{Power Operations over Virtually Orientable Rings of Height $1$}
Let $\cC\in\CAlg(\Pr_{\st}^{1\textup{-}\oplus})$, and suppose $R\in\CAlg(\cC)$ is virtually orientable of height $1$. Denote $R_*=\pi_*R$. Since $R_0$ is a $\delta$-ring by \Cref{theoremAproof},
there is a canonical ring homomorphism
\[\mdef{\Theta\colon \Free^{\delta}_{R_*}(t)\too \pi_*(\Free^{\mathbb E_\infty}_{R}(t))}\]
taking $t\mapsto t$. More generally, there is a canonical homomorphism $\Theta\colon \Free^{\delta}_{R_*}(T)\to \pi_*(\Free^{\mathbb E_\infty}_{R}(T))$ for any set $T$. This homomorphism $\Theta$ respects the weight gradings on the source and target (\Cref{delta_weight}, \Cref{weight}).
\begin{theorem}
\label{theoremBproof}
    Let $\cC\in\CAlg(\Pr_{\st}^{1\textup{-}\oplus})$, and suppose $R\in\CAlg(\cC)$ is virtually orientable of height $1$. For any finite set $T$, the canonical homomorphism
    \[\Theta\colon \Free^{\delta}_{R_*}(T)\too\pi_*(\Free^{\mathbb E_\infty}_{R}(T))\]
    exhibits the source as the subring of bounded weight $T$-ary power operations (\Cref{uncompleted_operations}).
\end{theorem}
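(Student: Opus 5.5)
We argue weight by weight, since $\Theta$ respects weight gradings. It therefore suffices to show that for each $n\ge0$ the map
\[\Theta_n\colon \Free^\delta_{R_*}(T)^{\wt=n}\too \pi_*\big(R^{\oplus T}\big)^{\otimes_R n}_{h\Sigma_n}\]
is an isomorphism. The source is free over $R_*$ on monomials in the $\theta^r(t)$ of total weight $n$. The target is computed in $\Mod_R(\cC)$, so the $R_*$-module structure is evident.

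First reduce to $T=\{t\}$. Since $\Free^{\mathbb E_\infty}_R(T\sqcup T')\simeq \Free^{\mathbb E_\infty}_R(T)\otimes_R\Free^{\mathbb E_\infty}_R(T')$ and $\Free^\delta_{R_*}(T)$ is the tensor product of the one-variable pieces (\Cref{Deltacoproducts}), a Künneth statement is needed. It holds once each weight piece $\Free^{\mathbb E_\infty}_R(t)^{\wt=n}=R[B\Sigma_n]$ is a free $R$-module. Next, $R[B\Sigma_n]\simeq R^{B\Sigma_n}$ by ambidexterity, and only the $p$-Sylow part matters because $\cC$ is $p$-local, by height $1$. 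By faithful descent we may pass to an orientable extension $S$ of $R$ and check freeness and the isomorphism there, since the statement is compatible with base change along faithful $R\to S$ and $\Theta$ is natural. Over $S$, \Cref{virtual} and its higher analogues, obtained via the Fourier transform for $\Sigma_{p^k}$-type groups, identify $S^{B\Sigma_n}$ with a free $S$-module of finite rank.

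The core step is the rank count together with detection. By the Fourier transform, or equivalently by transchromatic character-theoretic reasoning, an oriented height-$1$ ring $S$ has $S^{BG}$ free on conjugacy classes of maps $\ZZ_p\to G$, that is, conjugacy classes of $p$-power-order elements of $G$. For $G=\Sigma_n$, such classes correspond to partitions of $n$ into powers of $p$. These are in bijection with monomials $\prod_r\theta^r(t)^{a_r}$ satisfying $\sum a_rp^r=n$, so the ranks match.

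Surjectivity is then proved by induction on $n$, using the product structure
\[\Free^{\mathbb E_\infty}(t)^{\wt=a}\otimes\Free^{\mathbb E_\infty}(t)^{\wt=b}\to\Free^{\mathbb E_\infty}(t)^{\wt=a+b}.\]
This product is the transfer $R^{B(\Sigma_a\times\Sigma_b)}\to R^{B\Sigma_n}$. When $n$ is not a power of $p$, every $p$-element class comes from a proper Young subgroup, so induction handles it. When $n=p^k$, the only new class is the $p^k$-cycle, and one checks that $\theta^k(t)$ has a nonzero coefficient there, after using the basis $e^*,\overline\theta$ of \Cref{basis} and iterating through the wreath product $\Sigma_p\wr\Sigma_{p^{k-1}}$. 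Injectivity follows from equal ranks over a free module, or alternatively from the explicit pairing with the dual basis given by evaluation at the classes.

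I expect the main obstacle to be the higher analogue of \Cref{virtual}: proving that $S^{B\Sigma_n}$ is free with basis indexed by $p$-power conjugacy classes for general $n$, together with the triangularity showing that $\theta^k(t)$ hits the $p^k$-cycle class. I would first try to prove the claim for $S^{BG}$ with $G$ any finite group by applying the height-$1$ Fourier transform $S[BA]\simeq S^{A^*}$ for abelian $p$-groups $A$. I would then pass to general $G$ via the fusion and Burnside-ring partition of unity of \cite{yifan}. The iterated-wreath structure of Sylow subgroups of $\Sigma_{p^k}$ would reduce the $\theta^k$ computation to the $k=1$ case of \Cref{basis}.
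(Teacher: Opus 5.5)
There is a genuine gap, and it is where you expected the main obstacle. Your argument rests on two claims. The first is that over an oriented $S$ the module $S^{B\Sigma_n}$ is free of rank equal to the number of partitions of $n$ into powers of $p$. The hypotheses do not supply this. An $\FF_p$-orientation only gives Fourier isomorphisms for $\FF_p$-modules, i.e.\ for elementary abelian $BA$, while $\Sigma_{p^2}$ already contains $C_{p^2}$. You would therefore need $\ZZ/p^k$-Fourier transforms together with an abstract Artin-type induction theorem for $S^{BG}$ in an arbitrary $1$-semiadditive category. That is a substantial input, stronger than the statement you are proving, and you do not establish it. Also, freeness does not descend along a faithful $R\to S$, and $\pi_*$ does not commute with base change. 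What does descend is the invertibility of the explicit map $\bigoplus_m R\to R[B\Sigma_n]$ given by the monomials.

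The second claim is that surjectivity can be read off class by class, and this is only a rational argument. Integrally there is no basis of $\pi_0S^{B\Sigma_n}$ indexed by conjugacy classes; even for $KU_p^\wedge$ the character map becomes an isomorphism only after inverting $p$. So class-counting plus a nonzero character value of $\theta^k(t)$ at the $p^k$-cycle only shows that the image of $\Theta_n$ has full rank. At $n=p^k$ you would also need the lower-weight products to span the whole kernel of evaluation at the $p^k$-cycle, not merely a finite-index sublattice of it. These products are transfers from Young subgroups, all of index divisible by $p$, and nothing in your plan controls this. When $n$ is not a power of $p$ your conclusion is correct, but the operative reason is that some Young subgroup has index prime to $p$, not that every $p$-element class meets one.

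The missing idea is that no rank count or character theory is needed. Surjectivity comes from transfers along subgroups of index prime to $p$. These are split surjective, since pushforward after restriction is multiplication by the index, a unit because height $1$ forces $p$-completeness.
If $p\nmid n$, use $\Sigma_1\times\Sigma_{n-1}\subset\Sigma_n$ via the map $\Free^\delta_{R_*}(x,y)\to\Free^\delta_{R_*}(t)$ with $x,y\mapsto t$.
If $n=pk$, use $\Sigma_p\wr\Sigma_k\subset\Sigma_{pk}$. By \Cref{basis}, $R[B\Sigma_p]$ is free on $t^p,\theta(t)$, so $R[B(\Sigma_p\wr\Sigma_k)]\simeq\Free^{\mathbb E_\infty}_R(u,v)^{\wt=k}$ with $u\mapsto t^p$ and $v\mapsto\theta(t)$. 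This is covered by the inductive hypothesis for two generators in weight $k$. Your wreath-product remark points this way, but it should be used for surjectivity directly, not for a coefficient computation.

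Injectivity comes from the coaddition. If $\Theta_n(\omega)=0$, the mixed-weight part of $\Delta(\omega)$ lies where $\Theta$ is already injective by induction, so $\omega$ is primitive. Then $\omega=c\,\psi^r(t)$ by \Cref{primitive_classification}, and evaluating at $1$ gives $c=0$. Freeness of $\pi_*R[B\Sigma_n]$, which you need for the K\"unneth reduction, is then an output of the induction rather than an input. Working in the universal case $\widehat\tsadi_1$, where the underlying spectrum functor is conservative, upgrades it to freeness as an $R$-module.
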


We now outline our strategy. 
\begin{itemize}
\item First, note that it suffices to work in the universal case, namely $\cC=\widehat\tsadi_1$. This is useful because the underlying spectrum functor $\widehat\tsadi_1\to\Sp$ is conservative (as in \cite[Corollary 5.3.10]{ambiheight}), so isomorphisms in $\widehat\tsadi_1$ are detected on $\ZZ$-graded homotopy groups.
\item Next, we proceed by induction on weight. Assuming the claim holds in weights $\le n$ for all finite sets $T$, we reduce the claim in weight $n+1$ for all finite sets $T$ to the case $T=\{t\}$.
\item Finally, we treat the case $T=\{t\}$ using transfer arguments and an analysis of additive operations in $\delta$-rings.
\end{itemize}

We begin with the reduction to the case $T=\{t\}$.
\begin{proposition}
\label{reduce}
    Suppose that
    \[\Theta_{\le n}\colon \Free^{\delta}_{R_*}(t)^{\wt\le n}\too\pi_*(\Free^{\mathbb E_\infty}_{R}(t)^{\wt\le n})\]
    is an isomorphism. Then 
     \[\Theta_{\le n}\colon \Free^{\delta}_{R_*}(T)^{\wt\le n}\too \pi_*(\Free^{\mathbb E_\infty}_{R}(T)^{\wt\le n})\]
     is an isomorphism for all finite sets $T$.
\end{proposition}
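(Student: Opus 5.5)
The plan is to use the fact that both sides turn disjoint unions of generator sets into tensor products, and that both $\Theta$'s are compatible with these identifications. Then the statement for a finite set $T$ follows from the statement for a single generator.

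On the $\delta$-ring side, \Cref{Deltacoproducts} gives $\Free^\delta_{R_*}(T)\cong\bigotimes_{t\in T}\Free^\delta_{R_*}(t)$ over $R_*$. On the $\mathbb E_\infty$ side, $\Free^{\mathbb E_\infty}_R$ is a left adjoint from $\Mod_R$ to $\CAlg_R$, and coproducts in $\CAlg_R$ are relative tensor products. This gives
\[\Free^{\mathbb E_\infty}_R(T)\simeq\bigotimes_{t\in T}\Free^{\mathbb E_\infty}_R(t),\]
compatibly with weights: the weight $m$ part is $\bigoplus_{\sum m_t=m}\bigotimes_t\Free^{\mathbb E_\infty}_R(t)^{\wt=m_t}$. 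Hence $\Free^{\mathbb E_\infty}_R(T)^{\wt\le n}$ is a finite direct sum of tensor products of pieces $\Free^{\mathbb E_\infty}_R(t)^{\wt=m_t}$ with $m_t\le n$. The same decomposition holds for $\Free^\delta_{R_*}(T)^{\wt\le n}$, as a finite direct sum of tensor products over $R_*$ of the corresponding pure-weight pieces. Since $\Theta$ is a ring map sending each generator to itself, under these identifications $\Theta_{\le n}$ for $T$ is the sum over multi-indices of the external products of the single-generator maps $\Theta_{=m_t}$. Each of these is an isomorphism by hypothesis, since the weight $\le n$ isomorphism restricts to isomorphisms on each pure weight $m\le n$.

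It then remains to check a Künneth statement: the map
\[\bigotimes_{R_*}\pi_*\big(\Free^{\mathbb E_\infty}_R(t)^{\wt=m_t}\big)\too\pi_*\Big(\bigotimes_R\Free^{\mathbb E_\infty}_R(t)^{\wt=m_t}\Big)\]
must be an isomorphism. The key input is that, by hypothesis, each $\pi_*\Free^{\mathbb E_\infty}_R(t)^{\wt=m}\cong\Free^\delta_{R_*}(t)^{\wt=m}$ is a finitely generated free $R_*$-module, spanned by the monomials in $t,\theta(t),\dots$ of weight $m$. I will pick a monomial basis and use it to define a map $\bigoplus R\to\Free^{\mathbb E_\infty}_R(t)^{\wt=m}$ of $R$-modules, a finite sum of suitable shifts of $R$, which induces an isomorphism on $\pi_*$. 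For this to be an equivalence I need $\pi_*$-isomorphisms to be detected. Following the outlined strategy, I pass to the universal case $\cC=\widehat\tsadi_1$, where the underlying spectrum functor is conservative. Then $\pi_*$-isomorphisms of $R$-modules are equivalences, so each weight piece is a finite free $R$-module, and tensor products of finite free $R$-modules satisfy Künneth trivially.

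The main obstacle is precisely this last step, justifying that homotopy isomorphisms detect equivalences. In a general $\cC$, $\pi_*=\pi_*\mathrm{map}(\one,-)$ need not be conservative. So I must either reduce \Cref{theoremBproof} carefully to the universal example, via a symmetric monoidal functor $\widehat\tsadi_1\to\cC$ and base change of the universal virtually orientable algebra, or argue directly. The direct argument would note that, because the classes are given by explicit maps, the comparison map is obtained by base change from a universal one that is an equivalence. I would state the reduction in the universal setting and invoke conservativity there.
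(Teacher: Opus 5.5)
Your proposal is correct and follows essentially the same route as the paper. You decompose both sides by weight into finite sums of tensor products of single-generator pieces, identify $\Theta$ with the sum of products of the single-generator maps, and conclude with a K\"unneth isomorphism coming from freeness of those pieces. The paper's one-line claim that the pieces are free $R$-modules tacitly relies on the reduction to $\widehat\tsadi_1$, where $\pi_*$ detects equivalences, as stated in its proof outline; this is exactly the point you identify and handle.
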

\begin{proof}
     Suppose $|T|=2$, as the general case is the same. We will show $\Theta$ is an isomorphism on weight $n$. On the one hand, in light of \Cref{Deltacoproducts}, distributing the tensor product over the weight grading gives
     \[\Free^{\delta}_{R_*}(x,y)^{\wt=n}\cong\bigoplus_{i+j=n}\Free^{\delta}_{R_*}(x)^{\wt=i}\otimes_{R_*} \Free^{\delta}_{R_*}(y)^{\wt=j}.\]
     On the other hand, by the formula for the free commutative algebra on two elements we have
     \[\Free^{\mathbb E_\infty}_{R}(x,y)^{\wt=n}\simeq\bigoplus_{i+j=n}\Free^{\mathbb E_\infty}_{R}(x)^{\wt=i}\otimes \Free^{\mathbb E_\infty}_{R}(y)^{\wt=j}.\]
     Since the homotopy groups of $\Free^{\mathbb E_\infty}_{R}(x)^{\wt=i}$ and $\Free^{\mathbb E_\infty}_{R}(y)^{\wt=j}$ are free $R_*$-modules by assumption, they are free $R$-modules. There is thus a K\"unneth isomorphism
     \[\bigoplus_{i+j=n}\pi_*(\Free^{\mathbb E_\infty}_{R}(x)^{\wt=i})\otimes_{R_*}\pi_*(\Free^{\mathbb E_\infty}_{R}(y)^{\wt=j})\iso \pi_*(\Free^{\mathbb E_\infty}_{R}(x,y)^{\wt=n}).\]
     $\Theta_n$ is the composition
     \begin{align*}
      \Free^{\delta}_{R_*}(x,y)^{\wt=n}&\iso\bigoplus_{i+j=n}\Free^{\delta}_{R_*}(x)^{\wt=i}\otimes_{R_*} \Free^{\delta}_{R_*}(y)^{\wt=j}\\
      &\too\bigoplus_{i+j=n}\pi_*(\Free^{\mathbb E_\infty}_{R}(x)^{\wt=i})\otimes_{R_*}\pi_*(\Free^{\mathbb E_\infty}_{R}(y)^{\wt=j})\\
      &\iso \pi_*(\Free^{\mathbb E_\infty}_{R}(x,y)^{\wt=n}).
      \end{align*}
      Since the middle map is a direct sum of isomorphisms by assumption, the claim follows.
\end{proof}
At this point, we have reduced the proof of \Cref{theoremBproof} to the following proposition:
\begin{proposition}
    Suppose that
    \[\Theta_{< n}\colon \Free^{\delta}_{R_*}(T)^{\wt<n}\too\pi_*(\Free^{\mathbb E_\infty}_{R}(T)^{\wt<n})\]
    is an isomorphism for all finite sets $T$. Then 
     \[\Theta_n\colon \Free^{\delta}_{R_*}(t)^{\wt=n}\too \pi_*(\Free^{\mathbb E_\infty}_{R}(t)^{\wt=n})\]
     is an isomorphism.
\end{proposition}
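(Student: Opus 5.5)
We plan to compare the weight $n$ piece on both sides by means of the coaddition, isolating the primitive (additive) operations as the only new contribution in weight $n$. We work in the universal case $\cC=\widehat\tsadi_1$, so that isomorphisms are detected on homotopy groups. Write $A_n:=\Free^{\delta}_{R_*}(t)^{\wt=n}$ and $B_n:=\pi_*(\Free^{\mathbb E_\infty}_{R}(t)^{\wt=n})$.

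Both sides carry a coaddition compatible with $\Theta$. On the $\mathbb E_\infty$ side it comes from the map $\Free_R^{\mathbb E_\infty}(t)\to\Free_R^{\mathbb E_\infty}(x,y)$ sending $t\mapsto x+y$. In weight $n$ its reduced part lands in $\bigoplus_{i+j=n,\,i,j>0}$ of tensor products of lower-weight pieces, where $\Theta$ is already an isomorphism by hypothesis. This is the same identification of $\Free^{\mathbb E_\infty}_R(x,y)^{\wt=n}$ used in \Cref{reduce}, now used only for the mixed terms. We therefore get a map of short exact sequences
\[0\to \mathrm{Prim}(A_n)\to A_n\xrightarrow{\bar\Delta} \textstyle\bigoplus_{i+j=n,\, i,j>0}A_i\otimes A_j\]
and the analogous sequence for $B_n$, with the right-hand vertical map an isomorphism. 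By the five lemma, it then suffices to prove two things. First, $\Theta$ induces an isomorphism on primitives. Second, the images of $\bar\Delta$ agree, i.e.\ every element of the target hit by some element of $B_n$ is already hit from $A_n$.

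For the primitives, \Cref{primitive_classification} shows that $\mathrm{Prim}(A_n)$ is $R_*\cdot\psi^r(t)$ when $n=p^r$, and zero otherwise. On the $\mathbb E_\infty$ side, an additive weight $n$ operation corresponds to a map out of $t^{\otimes n}_{h\Sigma_n}$ that vanishes on all the transfers from Young subgroups $\Sigma_i\times\Sigma_j$. This is exactly the statement that the cross-effects vanish, so the operation factors through the "indecomposable" quotient. Using $1$-semiadditivity and height $1$, integration over $B\Sigma_n$ identifies $R^{B\Sigma_n}$ with $R[B\Sigma_n]$, and transfers from proper subgroups that are not $p$-subgroups become split. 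I would compute that the primitive quotient is $0$ unless $n=p^r$. In the case $n=p^r$, I would reduce to the Sylow subgroup and then to the iterated wreath product, using \Cref{virtual}-style splittings at each stage, together with the Galois hypothesis via faithful descent to an oriented $S$, where the Fourier transform identifies $R^{B C_p}$ explicitly. The goal is to show that the primitive part is free of rank $1$ over $R_*$, generated by $\Theta(\psi^r(t))$.

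For the image comparison, I would use the fact that the image of $\bar\Delta$ is cut out by coassociativity and cocommutativity constraints together with the integrality constraints coming from transfers. Since the total $\mathbb E_\infty$ side has the rational rank predicted by counting (rationally both sides are polynomial on primitives), it is enough to show that $B_n$ is generated by $A_n$ plus primitives. Concretely, for $f\in B_n$, $\bar\Delta f$ is a cocycle in the lower-weight part. Lifting this cocycle to some $a\in A_n$ requires the cohomology of the reduced cobar complex of $\Free^\delta$ in weight $n$ to agree with that of $\pi_*\Free^{\mathbb E_\infty}$. That agreement is not automatic, and this step, together with the rank-one primitive computation in weight $p^r$, is where I expect the main difficulty. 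I would attempt the $p^r$ case by induction on $r$ using $\int_{BC_p}$ and $\theta$ applied to weight-$p^{r-1}$ classes, which shows that $\theta(\theta^{r-1}t)$ hits the needed class. Torsion-freeness of $A_n$ then pins down the rest via rationalization.
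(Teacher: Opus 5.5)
Your reduction is logically sound. Since the mixed-weight comparison is an isomorphism (by the K\"unneth argument of \Cref{reduce}), $\Theta_n$ is injective once it is injective on primitives. It is surjective once it is surjective on primitives and $\bar\Delta(B_n)$ is contained in the image of $\bar\Delta(A_n)$.

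The injectivity half is essentially the paper's argument, and it needs no rank-one computation. By \Cref{primitive_classification} a primitive of $A_n$ is $c\,\psi^r(t)$ with $c\in R_*$. Evaluating the operation $\Theta(c\,\psi^r(t))$ on $1\in\pi_0R$ gives $c\,\psi^r(1)=c$, so it vanishes only if $c=0$.

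The genuine gap is surjectivity. Both claims you reduce it to are left unproved: that every primitive of $B_n$ comes from $A_n$, and that every $\bar\Delta(f)$ for $f\in B_n$ is already hit from $A_n$. You yourself note that the needed agreement of cobar cohomology is ``not automatic.'' The computation of primitives of $B_n$ via Sylow subgroups, wreath products and Fourier transforms is only gestured at.

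The fallback of counting ranks after rationalization also fails. $R_*$ is an arbitrary graded ring and may have a lot of torsion (e.g.\ for $R$ the $K(1)$-local sphere). So rationalization is not faithful, and $A_n$, though free over $R_*$, is not torsion-free. Moreover, the size of $B_n$ is exactly what is being determined, so it cannot be fed into a counting argument.

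The missing idea is that surjectivity can be proved directly, with no analysis of primitives on the $\mathbb E_\infty$ side. Take $n=pk$. Dualizing \Cref{basis} shows that $t^p$ and $\theta(t)$ form a basis of $R[B\Sigma_p]$. Hence the map $\Free^{\mathbb E_\infty}_R(t^p,\delta(t))^{\wt=k}\to\Free^{\mathbb E_\infty}_R(t)^{\wt=pk}$ given by $t^p\mapsto t^p$, $\delta(t)\mapsto\theta(t)$ is identified with $R[B(\Sigma_p\wr\Sigma_k)]\to R[B\Sigma_{pk}]$. Since $\Sigma_p\wr\Sigma_k$ has index prime to $p$ in $\Sigma_{pk}$ and everything is $p$-complete, a transfer argument shows this map is surjective on $\pi_*$.

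Its source is hit by $\Free^\delta_{R_*}(t^p,\delta(t))^{\wt=k}$, by the inductive hypothesis for a two-element set in weight $k<n$. This is exactly why the hypothesis is stated for all finite $T$. The evident square commutes, so $\Theta_{pk}$ is onto. For $n$ prime to $p$, one argues the same way with $x,y\mapsto t$ and the subgroup $\Sigma_1\times\Sigma_{n-1}$, which has index $n$.

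Together with the injectivity argument above, this proves the proposition and bypasses both open steps in your plan.
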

We break up the proof into surjectivity (\Cref{surjective}) and injectivity (\Cref{injective}).

\begin{lemma}
\label{surjective}
    The map
    \[\Theta_n\colon \Free^{\delta}_{R_*}(t)^{\wt=n}\too\pi_*(\Free^{\mathbb E_\infty}_{R}(t)^{\wt=n})\]
    is surjective.
\end{lemma}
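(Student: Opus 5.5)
We prove surjectivity by induction on $n$, using the inductive hypothesis that $\Theta_{<n}$ is an isomorphism for all finite sets $T$. The natural tool is the transfer/restriction structure relating $\Free^{\mathbb E_\infty}_R(t)^{\wt=n}=R_{h\Sigma_n}$ to the decomposables. Let $1\le i\le n-1$. The inclusion $\Sigma_i\times\Sigma_{n-i}\subset\Sigma_n$ induces a transfer map
\[R_{h\Sigma_i}\otimes R_{h\Sigma_{n-i}}\too R_{h\Sigma_n}.\]
On homotopy, this map is multiplication of operations of weights $i$ and $n-i$. Its image therefore lies in the image of $\Theta_n$, since by induction the factors come from $\Free^\delta_{R_*}(t)$.

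The plan is to show that, when $n$ is not a power of $p$, the sum of these transfers is surjective. For $n=p^k$, we will show the cokernel is spanned by the image of a single class that $\Theta$ hits, namely $\theta^k(t)$, or equivalently $\psi^k(t)$. The argument should proceed through the following steps.

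First, we reduce to $p$-Sylow subgroups. Since $R$ is $p$-complete and has height $1$, integration over $BG$ for a $p$-group detects everything. Using $1$-semiadditivity, the composite $R_{h\Sigma_n}\to R^{B\Sigma_n}\to R_{h\Sigma_n}$ of restriction to a Sylow $P$ followed by transfer is multiplication by $[\Sigma_n:P]$, which is a unit. This makes $R_{h\Sigma_n}$ a retract of $R_{hP}$ twisted appropriately.

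Second, we use virtual orientability and Fourier theory, in the style of \Cref{virtual}. These identify $R^{BG}$ for $p$-groups $G$ with functions on conjugacy classes of maps $C_p\to G$, or more precisely on $\mathrm{Hom}(\ZZ_p,G)/G$. This is the height-$1$ analogue of the HKR character theory. Under this identification, $\pi_*R_{h\Sigma_n}$ becomes a free $R_*$-module on conjugacy classes of commuting $p$-power-order elements, that is, $p$-adic cycle types.

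The decomposable part (the image of the transfers) then consists of exactly the cycle types that split nontrivially. The only indecomposable cycle type is a single $p^k$-cycle when $n=p^k$, and no cycle type is indecomposable otherwise.

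Third, in weight $p^k$ we exhibit an explicit preimage for the remaining generator. The class $\theta(\text{weight }p^{k-1}\text{ class})$ is formed as $\int_{BC_p}(\cdot)^p$ combined with $\mathcal P_p$, and is iterated. Its character is nonzero on the $p^k$-cycle and is a unit multiple of the indicator. This follows from the computation in \Cref{delta_character}, together with the analogue of the matrix in \Cref{virtual}, which shows $\theta$ contributes $1$ on $p$-cycles.

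Hence $\theta^k(t)$ together with the decomposables spans, giving surjectivity. Since the underlying-spectrum functor is conservative in the universal case $\widehat\tsadi_1$, working with $\ZZ$-graded homotopy suffices.

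The main obstacle is the second step: justifying that $\pi_*R_{h\Sigma_n}$ is computed by a character-type decomposition from virtual orientability alone, without Morava $E$-theory. If that proves too heavy, a fallback is available. Since everything is detected after faithful base change to an orientable $S$, we can argue with $S$. We then iterate the splitting $R^{B\Sigma_p}\simeq R\oplus R$ of \Cref{virtual} along the wreath products $\Sigma_p\wr\cdots\wr\Sigma_p$, which are Sylow in $\Sigma_{p^k}$. Each level contributes one new generator, $\theta$ of the previous one, beyond the image of $e_*$, which is decomposable. This is a more elementary version of the same count.
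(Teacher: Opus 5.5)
Your main route has a genuine gap at Step 2, and the whole argument rests on that step. Virtual $\FF_p$-orientability only makes the Fourier transform invertible on $\FF_p$-modules such as $BC_p$; this is all that \Cref{virtual} uses. By itself it does not give an HKR-type description of $R^{BG}$ for general $p$-groups such as $BC_{p^2}$ or $C_p\wr C_p$, and you give no argument for one.

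More importantly, even a character theory would not prove surjectivity. Character-type identifications hold only after inverting $p$ (or, for $\FF_p$-modules, for the convolution ring via $\mathfrak F_\omega$). So for $\Sigma_n$ they give at most a rank count. What you need is that the Young-subgroup transfers together with $\theta^k(t)$ \emph{span} $\pi_*R[B\Sigma_n]$ integrally. Your claims that the decomposables are ``exactly the cycle types that split'' and that $\theta^k(t)$ is ``a unit multiple of the indicator'' modulo them are exactly these integral statements, and they are asserted rather than proved. Also, \Cref{delta_character} concerns rational $\Sigma_p$-representations for the $\lambda$-ring operation $\delta_p$. It says nothing about classes in $\pi_*R[B\Sigma_{p^k}]$.

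The missing idea, which is how the paper argues, is to apply the inductive hypothesis to a \emph{two-element} set $T$.
\begin{itemize}
\item For $n=pk$: dualizing \Cref{basis} gives $R[B\Sigma_p]\simeq R\oplus R$ with basis $t^p,\theta(t)$. Hence $R[B(\Sigma_p\wr\Sigma_k)]\simeq (R[B\Sigma_p]^{\otimes k})_{h\Sigma_k}$ is the weight-$k$ part of the free $\mathbb E_\infty$-$R$-algebra on $a=t^p$ and $b=\theta(t)$. By induction, its homotopy is spanned by weight-$k$ $\delta$-polynomials in $a,b$, all of which lie in the image of $\Theta_{pk}$.
\item The index $[\Sigma_{pk}:\Sigma_p\wr\Sigma_k]$ is prime to $p$, so the transfer shows that $R[B(\Sigma_p\wr\Sigma_k)]\to R[B\Sigma_{pk}]$ is surjective on $\pi_*$.
\item For $p\nmid n$, the same argument works with $\Sigma_1\times\Sigma_{n-1}$ (index $n$) and the K\"unneth isomorphism.
\end{itemize}
No Sylow reduction, character theory, or identification of indecomposables is needed.

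Your fallback points at the right objects. Note that $\Sigma_p\wr\cdots\wr\Sigma_p$ is not Sylow, but it has index prime to $p$, which is what matters. As written, though, the fallback is again a count. The phrase ``one new generator per level'' is only true modulo decomposables, which is precisely what must be shown, and the fallback only treats $n=p^k$. What makes the wreath-product step work is recognizing $(R[B\Sigma_p]^{\otimes k})_{h\Sigma_k}$ as a free $\mathbb E_\infty$-algebra piece on two generators, so that the inductive hypothesis for $|T|=2$ controls all of it. That piece already contains $\theta(t^p)$, $\theta^2(t)$, and all monomials in $t^p,\theta(t)$.
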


\begin{proof}We follow \cite{hoo}. First suppose $n=pk$. Treating `$t^p$',`$\delta(t)$' as formal symbols, there is a canonical homomorphism \[\Free^{\delta}_{R_*}(t^p,\delta(t))\too \Free^{\delta}_{R_*}(t)\]
    determined by sending $t^p\mapsto t^p$ and $\delta(t)\mapsto \delta(t)$. Note that it does \textit{not} respect the weight grading on both sides, as it sends weight $k$ to weight $pk$.
 There is a commutative diagram
\[\begin{tikzcd}
	{\Free^{\delta}_{R_*}(t^p,\delta(t))^{\wt=k}} & {\pi_*\Free^{\mathbb E_\infty}_{R}(t^p,\delta(t))^{\wt=k}} \\
	{\Free^{\delta}_{R_*}(t)^{\wt=pk}} & {\pi_*\Free^{\mathbb E_\infty}_{R}(t)^{\wt=pk}}
	\arrow["{\Theta_k}", from=1-1, to=1-2]
	\arrow[from=1-1, to=2-1]
	\arrow[from=1-2, to=2-2]
	\arrow["{\Theta_{pk}}", from=2-1, to=2-2]
\end{tikzcd}\]
Since $\Theta_k$ is an isomorphism by induction, to show that $\Theta_{pk}$ is surjective it suffices to show that the right vertical map is surjective. By dualizing \Cref{basis}, we see that $t^p,\theta(t)$ are a basis of $R[B\Sigma_p]$. The right vertical map identifies with the composition
\[\Free^{\mathbb E_\infty}_{R}(t^p,\delta(t))^{\wt=k}\iso  \mathrm{Free}^{\mathbb E_\infty}_R(R[B\Sigma_p])^{\wt=k}= R[B\Sigma_p]^{\otimes k}_{h\Sigma_k}\simeq  R[B\Sigma_p\wr\Sigma_k]\longrightarrow R[B\Sigma_{pk}].\]
Since $\Sigma_p\wr\Sigma_k\subset\Sigma_{pk}$ has index prime to $p$ and $\widehat\tsadi_1$ is $p$-complete, the final map is surjective on $\pi_*$. Therefore the composition is surjective, as desired.

The case in which $n$ is prime to $p$ is similar, except one now begins by considering the homomorphism 
\[\Free^{\delta}_{R_*}(x,y)\too \Free^{\delta}_{R_*}(t)\]
determined by taking $x\mapsto t$ and $y\mapsto t$.
\end{proof}
We now turn to injectivity. 
\begin{lemma}
\label{injective}
   The map
    \[\Theta_n\colon \Free^{\delta}_{R_*}(t)^{\wt=n}\too\pi_*(\Free^{\mathbb E_\infty}_{R}(t)^{\wt=n})\]
    is injective.
\end{lemma}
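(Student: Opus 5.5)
We argue by induction on the weight $n$, assuming (as in the ambient induction) that $\Theta_{<n}$ is an isomorphism for all finite sets $T$. The tool is the coaddition. The free algebra $\Free^{\mathbb E_\infty}_R(t)$ carries a coaddition
\[\Delta\colon \Free^{\mathbb E_\infty}_R(t)\too \Free^{\mathbb E_\infty}_R(x,y),\qquad t\mapsto x+y,\]
representing addition of operations. The homomorphism $\Theta$ intertwines this with the coaddition on $\Free^\delta_{R_*}(t)$ of \Cref{graded_delta}, because $\Theta$ is natural in $T$ and compatible with the weight gradings. Now let $\alpha\in\Free^\delta_{R_*}(t)^{\wt=n}$ satisfy $\Theta_n(\alpha)=0$. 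We will first show that $\alpha$ is primitive, then that a nonzero primitive cannot lie in the kernel.

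\textbf{Step 1: $\alpha$ is primitive.} Write $\Delta(\alpha)=\alpha\otimes1+1\otimes\alpha+\sum_{0<i<n}\beta_i$ with $\beta_i\in\Free^\delta_{R_*}(x)^{\wt=i}\otimes\Free^\delta_{R_*}(y)^{\wt=n-i}$. Applying $\Theta$ gives $\sum_i\Theta(\beta_i)=0$ in $\pi_*\Free^{\mathbb E_\infty}_R(x,y)^{\wt=n}$. By the K\"unneth decomposition in the proof of \Cref{reduce}, the mixed summands $0<i<n$ only involve weights $<n$ in each variable. On these summands $\Theta$ is an isomorphism by the inductive hypothesis, so $\Theta$ is injective there. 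Hence every $\beta_i=0$ and $\alpha$ is primitive. \Cref{primitive_classification} then gives $\alpha=c\cdot\psi^r(t)$ with $c\in R_*$. Here the graded variant is used: tensor the universal classification with $R_*$, which is valid because $\Free^\delta_\ZZ(t)$ is free. We get $n=p^r$, and otherwise $\alpha=0$.

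\textbf{Step 2: a primitive in the kernel is zero.} We must show that $c\,\Theta(\psi^r(t))=0$ forces $c=0$. This is the main obstacle, and it is where height $1$ and virtual orientability are used. The plan is to evaluate the operation on a test algebra. The natural choice is $S=R^{BC_{p^r}}$, or more simply $R^{B\Sigma_{p^r}}$ with the class $t$ sent to a suitable element. Alternatively, transfer to $R[B\Sigma_{p^r}]$ and pair with a $\delta$-computation.

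For $r=1$ the argument is concrete. By \Cref{basis}, $t^p$ and $\theta(t)$ form a basis of $\pi_*R[B\Sigma_p]$ over $R_*$. So $\psi(t)=t^p+p\theta(t)$ is a basis element, and $c\,\psi(t)=0$ forces $c=0$. For general $r$, iterate: the map $\Free^{\mathbb E_\infty}_R(t^p,\delta(t))\to\Free^{\mathbb E_\infty}_R(t)$ from \Cref{surjective} sends $\psi^{r-1}$ of the generator $s=t^p+p\,\delta(t)$ to $\psi^r(t)$. One then shows this map is split injective on the relevant piece by a transfer argument. The index of $\Sigma_p\wr\Sigma_{p^{r-1}}$ in $\Sigma_{p^r}$ is prime to $p$, so restriction followed by transfer is multiplication by a $p$-adic unit. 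With the inductive hypothesis in weight $p^{r-1}$, this reduces the claim to $r-1$.

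If this transfer step proves delicate, the fallback is to work over $\widehat\tsadi_1$ and pass to a faithful orientable extension. There, after applying the Fourier transform, $R^{BC_{p^r}}$ decomposes into copies of $R$. The explicit evaluation $\psi^r(x)\mapsto$ (a unit multiple of) the character sum then detects $c$.
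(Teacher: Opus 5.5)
Your Step 1 is the paper's argument: coaddition, injectivity of $\Theta$ on the mixed-weight summands via the inductive hypothesis and the K\"unneth splitting of \Cref{reduce}, then \Cref{primitive_classification}. The gap is in Step 2. You leave it as a hedged plan, and its mechanism for general $r$ does not work. The index-prime-to-$p$ transfer shows that $R[B(\Sigma_p\wr\Sigma_{p^{r-1}})]\to R[B\Sigma_{p^r}]$ is split \emph{surjective}; this is how it is used in \Cref{surjective}. It does not make the map injective on any piece you have identified, and the map is in fact not injective. For $r=2$, send $a\mapsto t^p$ and $b\mapsto\theta(t)$, and put $f(a,b)=\big((a+pb)^p-a^p\big)/p\in\ZZ[a,b]$. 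The class $\theta(a)-f(a,b)\in\pi_0(\Free^{\mathbb E_\infty}_R(a,b)^{\wt=p})$ is nonzero by the inductive hypothesis. Yet it maps to $\theta(t^p)-f(t^p,\theta(t))=0$, because $\pi_0\Free^{\mathbb E_\infty}_R(t)$ is a $\delta$-ring. So knowing $c\,\psi^{r-1}(s)\neq0$ upstairs says nothing about its image. Using the transfer in the other direction would require computing the transfer of $\psi^r(t)$ by a double coset formula, which you do not do. The Fourier-transform fallback is too vague to assess. An $\FF_p$-orientation only controls $\FF_p$-modules, so it is unclear why $R^{BC_{p^r}}$ should split for $r>1$.

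What you are missing is a one-line test: evaluate on $1\in\pi_0R$. That is, apply the map of commutative $R$-algebras $\Free^{\mathbb E_\infty}_R(t)\to R$ sending $t\mapsto 1$. This map commutes with $\theta$, and $\theta(1)=0$ gives $\psi(1)=1$. Hence $\Theta(c\,\psi^r(t))\mapsto c\,\psi^r(1)=c$, so $\Theta_n(\alpha)=0$ forces $c=0$ for every $r$. This is exactly how the paper concludes. In particular, Step 2 is not where height $1$ or virtual orientability enter. Those hypotheses are needed only for $\Theta$ to exist (\Cref{theoremAproof}) and for the inductive hypothesis used in Step 1.
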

\begin{proof}
    We leverage the coaddition $\Delta$ on $\Free^{\delta}_{R_*}(t)$ to reduce the claim to injectivity on lower weights. There is a commutative diagram
\[\begin{tikzcd}[ampersand replacement=\&,column sep=huge]
	{\Free^{\delta}_{R_*}(t)} \& {\pi_*(\Free^{\mathbb E_\infty}_{R}(t))} \\
	{\Free^{\delta}_{R_*}(x,y)} \& {\pi_*(\Free^{\mathbb E_\infty}_{R}(x,y))}
	\arrow["\Theta", from=1-1, to=1-2]
	\arrow["\Delta"', from=1-1, to=2-1]
	\arrow["\Delta", from=1-2, to=2-2]
	\arrow["\Theta", from=2-1, to=2-2]
\end{tikzcd}\]
in which all maps respect the weight grading. Suppose $\omega\in \Free^{\delta}_{R_*}(t)^{\wt=n}$ is in the kernel of $\Theta_n$. Recall that
\[\Free^{\delta}_{R_*}(x,y)^{\wt=n}\cong\bigoplus_{i+j=n}\Free^{\delta}_{R_*}(x)^{\wt=i}\otimes_{R_*} \Free^{\delta}_{R_*}(y)^{\wt=j}.\]
Since $\Delta(\omega)$ lies in $\Free^{\delta}_{R_*}(x,y)^{\wt=n}$, we can write
\[\Delta(\omega)=\omega\otimes 1+1\otimes\omega+\widetilde\Delta(\omega)\]
where $\widetilde\Delta(\omega)$ lies in the summand of mixed weights. Since $\Theta_n(\omega)=0$, we have
\begin{align*}0&=\Delta(\Theta_n(\omega)\\
&=\Theta_n(\Delta(\omega))\\
&={\Theta_n(\omega)\otimes1}+{1\otimes \Theta_n(\omega)}+\Theta_n(\widetilde\Delta(\omega))\\
&=0+0+\Theta_n(\widetilde\Delta(\omega)).
\end{align*}
By induction, $\Theta_n$ is injective on the submodule of $\Free^{\delta}_{R_*}(x,y)^{\wt=n}$ of mixed weights, so $\widetilde\Delta(\omega)=0$. We learn that $\omega$ is a primitive. But the only primitives in $\Free^{\delta}_{R_*}(t)$ are iterates of the Frobenius 
\[\omega=c\cdot \psi^{ r}(t),\]
for $c\in R_*$ (\Cref{primitive_classification}). Since $\Theta_n(\omega)=0$, the power operation $c\psi^{\circ r}$ acts by zero on classes of degree $0$ in all commutative $R$-algebras in $\cC$. But evaluating it on $1\in\pi_0R$ gives
\[c=c\cdot\psi^r(1)=0,\]
so $\omega=0$.
\end{proof}

\subsection{$\lambda$-Rings and Redshift}
\label{lambda_redshift}
In this section we relate operations on the $K$-theory of rational symmetric monoidal categories $\cC$ to the semiadditive power operation $\theta$ studied above. We begin with the well-known fact that $K_0(\cC)$ is a $\lambda$-ring. In particular, we recall the construction of the $\lambda$-ring operations on the level of the $K$-theory space of $\cC$ (this is one of the motivating examples discussed in \cite{polynomial}).
\begin{lemma}
    Let $\cC\in \Cat_\perf$ be $\QQ$-linear. Then $\cC$ has $1$-finite colimits.
\end{lemma}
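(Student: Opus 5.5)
To show that $\cC$ admits colimits indexed by $1$-finite spaces, I would reduce to a single connected component and then to homotopy orbits. A $1$-finite space is a finite disjoint union of spaces $BG$ with $G$ a finite group. Since $\cC$ is stable, it has finite coproducts, so it suffices to show that for every finite group $G$ and every $X\in\cC^{BG}$ the colimit $X_{hG}$ exists in $\cC$.

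The plan is to build $X_{hG}$ as a retract of the underlying object $X$ and then invoke idempotent-completeness. Because $\cC$ is $\QQ$-linear, the group order $|G|$ acts invertibly on every mapping spectrum. In the homotopy category, the group ring $\QQ[G]$ acts on $X$, so I can form the averaging element
\[
\varepsilon=\frac{1}{|G|}\sum_{g\in G}g .
\]
This is an idempotent endomorphism of $X$ in $\mathrm{h}\cC$. In an idempotent-complete stable category, idempotents in the homotopy category lift to coherent idempotents; this uses that stable categories are idempotent-complete if and only if their homotopy categories are (Lurie, HA 1.2.4.6). Hence there is a splitting $X\simeq X'\oplus X''$ with $X'$ the image of $\varepsilon$.

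It remains to check that $X'$, together with the canonical map $X\to X'$, satisfies the universal property of $X_{hG}$. I would verify this after applying $\mathrm{map}_\cC(-,Y)$ for arbitrary $Y\in\cC$. The spectrum $\mathrm{map}(X,Y)$ is a rational spectrum with a $G$-action, and the required universal property says
\[
\mathrm{map}(X',Y)\simeq \mathrm{map}(X,Y)^{hG}.
\]
For rational spectra, homotopy fixed points of a finite group are computed by the same averaging idempotent: the homotopy fixed point spectral sequence collapses because higher group cohomology with $\QQ$-coefficients vanishes. This gives $\pi_*\mathrm{map}(X,Y)^{hG}=(\pi_*\mathrm{map}(X,Y))^G=\varepsilon\,\pi_*\mathrm{map}(X,Y)$, which agrees with $\pi_*\mathrm{map}(X',Y)$.

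The main obstacle is the coherence step: identifying $X'$ with the colimit in $\cC$, rather than merely up to homotopy-level data. To handle this cleanly, I would embed $\cC$ into $\mathrm{Ind}(\cC)$, which is presentable and $\QQ$-linear. There the colimit $X_{hG}$ exists, and the comparison map to $X'$ can be checked to be an equivalence on mapping spectra by the argument above. Consequently $X_{hG}$ is a retract of $X\in\cC$, and since $\cC$ is idempotent-complete, $X_{hG}$ lies in $\cC$.
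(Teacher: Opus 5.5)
Your proposal is correct. It shares the paper's global strategy: pass to $\Ind(\cC)$, exhibit $X_{hG}$ there as a retract of $X$, and conclude by idempotent-completeness of $\cC$. Where you differ is in how the retraction is produced.

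The paper obtains it from the ambidexterity machinery. Since $|G|$ is invertible, $BG$ is $\Ind(\cC)$-acyclic and ambidextrous, so \cite[Proposition 4.3.4]{ambiheight} and \cite[Proposition 4.3.1]{ambiheight} provide a coherent idempotent natural transformation of $\id_{\Ind(\cC)^{BG}}$ whose splitting is the left adjoint of $BG^*$. Splitting this idempotent commutes with passing to underlying objects.

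You instead split the homotopy-level averaging idempotent $\frac{1}{|G|}\sum_g g$ on $X$, which is legitimate by HA 1.2.4.6. You then identify the image with $X_{hG}$ by a Yoneda argument, using collapse of the homotopy fixed point spectral sequence for rational spectra. Your route is more elementary and self-contained. The paper's is shorter given the cited results, and it keeps the idempotent coherent at the level of functors, in keeping with the paper's semiadditive framework.

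One correction: the comparison map should be $X'\to X\to X_{hG}$, the inclusion of the summand followed by the colimit map, not a map ``to $X'$''. A map $X_{hG}\to X'$ would require exactly the coherent $G$-invariance of the projection $X\to X'$ that you flagged as the obstacle. With the map $X'\to X_{hG}$, your computation shows that
\[
\pi_*\mathrm{map}(X_{hG},Y)\cong(\pi_*\mathrm{map}(X,Y))^G\too\pi_*\mathrm{map}(X',Y)
\]
is restriction along the split inclusion $X'\to X$, which is a bijection on $G$-invariants because $p\circ g=p$ for the projection $p$. This holds for every $Y\in\Ind(\cC)$, since all mapping spectra there are rational. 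By full faithfulness of $\cC\subset\Ind(\cC)$, $X'$ is then the colimit in $\cC$ as well. Given this, your initial attempt to verify the universal property of $X'$ directly in $\cC$ can be dropped.
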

\begin{proof}
We will show that $\cC$ has $BG$-shaped colimits by passing to the ind-completion $\Ind(\cC)$. Since $|G|$ is invertible and $\Ind(\cC)$ has $1$-finite colimits, $BG$ is $\Ind(\cC)$-acyclic and $\Ind(\cC)$-ambidextrous. Then by \cite[Proposition 4.3.4]{ambiheight}, there is an idempotent endomorphism 
\[\varepsilon\colon \id_{\Ind(\cC)^{BG}}\too\id_{\Ind(\cC)^{BG}}\] 
such that splitting $\epsilon$ realizes the projection onto the essential image of the fully-faithful functor \[BG^*:\Ind(\cC)\to\Ind(\cC)^{BG}.\] Additionally, this projection identifies with the left adjoint of $BG^*$ (\cite[Proposition 4.3.1]{ambiheight}). 

We wish to verify that this projection restricts to give a left adjoint of $BG^*\colon \cC\to\cC^{BG}$. Given $X$ in $\cC^{BG}\subset\Ind(\cC)^{BG}$, splitting $\varepsilon$ on $X$ commutes with passing to the underlying object. Since $\cC$ is idempotent-complete, the result already lies in $\cC$, as desired.
\end{proof}
\begin{proposition}
\label{rational_lambda}
    Let $\cC\in\CAlg(\Cat_\perf)$ be $\QQ$-linear. For finite-dimensional $\Sigma_n$-representations $V$, the operations
    \[X\longmapsto (V\otimes X^{\otimes n})_{h\Sigma_n}\]
    are well-defined on $\uK(\cC)$ and endow $   K_0(\cC)$ with the structure of a $\lambda$-ring. 
\end{proposition}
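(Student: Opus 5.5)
The plan is to use polynomial functoriality (\Cref{polynomial_functoriality}) to construct the operations, and then reduce the $\lambda$-ring identities to classical representation theory by a universal-example argument.

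\textbf{Well-definedness.} Since $\cC$ is $\QQ$-linear, it has $1$-finite colimits by the preceding lemma. Exactly as in \Cref{beta_polynomial}, the functor $X\mapsto (V\otimes X^{\otimes n})_{h\Sigma_n}$ is a composite of $(-)^{\otimes n}\colon\cC\to\cC^{B\Sigma_n}$, tensoring with $V$ (regarded in $\cC^{B\Sigma_n}$ via the unit map $\Mod_\QQ\to\cC$), and $B\Sigma_n{}_!$. The first map is polynomial of degree $n$ by \Cref{nth_power_polynomial}, and the others are exact. Hence $\uK$ applies to this composite and yields maps $\uK(\cC)\to\uK(\cC)$, which I denote $\widehat\beta_{\Sigma_n}^V$. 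Exactness of $V\mapsto (V\otimes X^{\otimes n})_{h\Sigma_n}$ (rationally, $(-)_{h\Sigma_n}$ is exact and splits) shows that the induced operation on $K_0$ depends only on the class of $V$ in $\Rep(\Sigma_n)$. It is additive in $V$, so it extends to virtual representations. This gives a map $\Lambda=\bigoplus_n\Rep(\Sigma_n)\to\{\text{operations on }K_0(\cC)\}$.

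\textbf{The $\lambda$-ring identities.} I would take $\lambda^n$ to be the operation attached to the sign representation, i.e. $X\mapsto \Lambda^nX:=(\mathrm{sgn}\otimes X^{\otimes n})_{h\Sigma_n}$, and verify the four axioms of \Cref{defn1}.
\begin{enumerate}
\item Normalization is immediate.
\item For additivity, there is a natural decomposition
\[(X\oplus Y)^{\otimes n}\simeq \bigoplus_{i+j=n}\Ind_{\Sigma_i\times\Sigma_j}^{\Sigma_n}X^{\otimes i}\otimes Y^{\otimes j}.\]
Taking sign-twisted orbits gives $\Lambda^n(X\oplus Y)\simeq\bigoplus\Lambda^iX\otimes\Lambda^jY$. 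This holds on objects, and on $K_0$ every class is a difference $[X]-[Y]$. One must check that the polynomial-functoriality extension to virtual classes agrees with the unique additive extension; I would get this from the fact that the $K_0$-level operation induced by a degree-$n$ polynomial functor is determined by its cross-effects, which are computed by the same decomposition.
\item[3--4.] For the product and composition axioms, I would argue universally. The identities $\lambda^n(XY)=P_n$ and $\lambda^m\lambda^n=P_{m,n}$ express natural isomorphisms, up to filtrations, of Schur functors: $(X\otimes Y)^{\otimes n}$ versus $X^{\otimes n}\otimes Y^{\otimes n}$ with the diagonal $\Sigma_n$-action, and plethysm. Because $\cC$ is $\QQ$-linear, every polynomial functor of this form decomposes, via idempotents in $\QQ[\Sigma_n]$, into Schur functors $S_\mu(X)=(V_\mu\otimes X^{\otimes n})_{h\Sigma_n}$. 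The resulting multiplicities are exactly those in $\Rep(\Sigma_n)$ computed in $\Lambda\cong\mathrm{Symm}$ (\Cref{free_lambda}). Concretely: $S_\mu(X\otimes Y)\simeq\bigoplus (g_{\mu\nu\rho}) S_\nu X\otimes S_\rho Y$ via Kronecker coefficients, and $S_\mu\circ S_\nu$ decomposes via plethysm coefficients. These are natural isomorphisms of functors on objects, so they hold in $K_0$ on effective classes and extend to virtual classes as in step 2.
\end{enumerate}

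\textbf{Main obstacle.} The delicate point is the passage from effective to virtual classes in steps 2--4. $K_0$ operations from polynomial functors are not additive, so the identities for $[X]-[Y]$ do not follow formally from those for objects. I would handle this by passing to the $\lambda$-ring completion: the operations on $K_0(\cC)$ form a pre-$\lambda$-ring structure extending the one on the monoid of isomorphism classes, with $\lambda_t(x-y)=\lambda_t(x)\lambda_t(y)^{-1}$. By the standard splitting-principle argument (\cite{knutson}), a pre-$\lambda$-ring structure on a group completion satisfies axioms 3 and 4 as soon as it satisfies them on the generating monoid. Finally, I would check that the polynomial-functoriality operations on virtual classes agree with this $\lambda_t$-extension, using step 2 applied to the cofiber sequences defining differences in $\uK$.
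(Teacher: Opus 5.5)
Your construction of the operations matches the paper's. You use that $\cC$ has $1$-finite colimits (by the preceding lemma), factor the functor as in \Cref{beta_polynomial}, and apply polynomial functoriality. One thing you should state explicitly: the $V$ in the statement are complex representations. You need that every representation of $\Sigma_n$ is defined over $\QQ$ before $V$ can be regarded as an object of $\cC^{B\Sigma_n}$ via $\QQ$-linearity; the paper says this.

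For the $\lambda$-ring identities the paper gives no argument of its own. It cites Heinloth and the comparison of exterior power operations as ``well known.'' You instead verify the identities directly from:
\begin{itemize}
\item the decomposition $X^{\otimes n}\simeq\bigoplus_\mu V_\mu\otimes S_\mu X$, which follows from semisimplicity of $\QQ[\Sigma_n]$;
\item the Kronecker-coefficient decomposition of $S_\mu(X\otimes Y)$, for axiom 3;
\item plethysm, for axiom 4.
\end{itemize}
This is a valid and more self-contained route, essentially what the cited sources do for $\QQ$-linear Karoubian tensor categories. It makes visible that only semisimplicity of $\QQ[\Sigma_n]$ and compatibility of $\otimes$ with split idempotents are used. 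The paper's route is shorter, but it leaves implicit that identities proved in the additive $K_0$ of the underlying tensor category descend to $K_0(\cC)$.

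The ``main obstacle'' you single out is not actually there, and your workaround is more roundabout than needed. In a stable category every element of $K_0(\cC)$ is the class of a single object: $[\Sigma Y]=-[Y]$, so $[X]-[Y]=[X\oplus\Sigma Y]$. Polynomial functoriality makes $\pi_0\uK(F)$ a well-defined function on $K_0(\cC)$ sending $[X]$ to $[F(X)]$. So each natural isomorphism of functors you write down (the sum decomposition in step 2, Kronecker, plethysm) immediately gives the corresponding identity on all of $K_0(\cC)$. The cross-effect argument, the pre-$\lambda$ extension, and the final ``agreement on virtual classes'' check are all unnecessary.

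There are also two small inaccuracies in that part.
\begin{itemize}
\item $K_0(\cC)$ is not the group completion of the monoid of isomorphism classes. It is a quotient of it by cofiber-sequence relations, and respecting those relations is exactly what polynomial functoriality supplies.
\item The reduction to generators you attribute to the splitting principle is really something simpler. For axiom 3 it is biadditivity of $(x,y)\mapsto\lambda_t(xy)$ and $(x,y)\mapsto\lambda_t(x)\ast\lambda_t(y)$. For axiom 4 it is that, once axiom 3 holds, the set of $x$ satisfying it is closed under sums and negatives.
\end{itemize}
Neither affects the validity of your argument.
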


\begin{proof}First note that every $\Sigma_n$-representation $V$ is defined over $\QQ$, so that $V$ gives an object in $\cC^{B\Sigma_n}$. Since $\cC$ is closed under $1$-finite colimits, there are operations $\widehat \beta_G^V$ defined on $\uK(\cC)$ (\Cref{poly_operations_k_theory}). In particular, we can take $\lambda^n=\widehat\beta_{\Sigma_n}^\text{sgn}$ (or $\sigma^n=\widehat\beta_{\Sigma_n}^\text{triv}$). The fact that these satisfy the required $\lambda$-ring identities on $K_0(\cC)$ is well known (for example, as in \cite[Section 4.2]{heinloth2005notefunctionalequationszeta} or \cite{comparisonexteriorpoweroperations}).
\end{proof}
\begin{rem}
Suppose that $R$ is a discrete commutative ring (not necessarily a $\QQ$-algebra). We note that $K_0(R)$ is still a $\lambda$-ring (for example, see \cite{comparisonexteriorpoweroperations} for various constructions). However, we do not know how to make the comparison of \Cref{redshift_delta} in this case.
\end{rem}
We now relate this $\lambda$-ring structure on $K(\cC)$ to the semiadditive power operation $\theta$. Since the $\lambda$-ring operations on $K_0(\cC)$ are defined on the underlying space $\uK(\cC)$, there are operations
\[\delta_p\colon \uK(\cC)\too\uK(\cC)\]
that give the $p$-derivations of \Cref{defn3} on $K_0(\cC)$. Working at a fixed prime $p$, this single operation $\delta_p$ is exactly the `$p$-typical part' of the $\lambda$-ring structure on $K(\cC)$ (\Cref{ptypical}).
\begin{cor}
\label{redshift_delta}
    Let $\cC\in\CAlg(\Cat_\perf)$ be $\QQ$-linear. Then there is a commutative diagram
\[\begin{tikzcd}[ampersand replacement=\&]
	{\uK(\cC)} \& {\uK(\cC)} \\
	{\uK_{T(1)}(\cC)} \& {\uK_{T(1)}(\cC)}
	\arrow["\delta_p", from=1-1, to=1-2]
	\arrow[from=1-1, to=2-1]
	\arrow[from=1-2, to=2-2]
	\arrow["\theta", from=2-1, to=2-2]
\end{tikzcd}\]
where $\delta_p$ is the $p$-derivation associated to the $\lambda$-ring structure on $K(\cC)$ and $\theta$ is the operation of \Cref{theta}.
\end{cor}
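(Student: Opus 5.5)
The plan is to express $\delta_p$ on $\uK(\cC)$ as a combination of polynomial operations $\widehat\beta_G^V$ with $G$ a $p$-group. Then \Cref{lnf_comparison} (for $n=0$; a $\QQ$-linear $\cC$ is $L_0^f$-local and has $1$-finite colimits by the preceding lemma) identifies its image with the matching combination of semiadditive operations $\beta_G^{[V]}$ on $\uK_{T(1)}(\cC)$. The last step is to check that this combination equals $\theta$ from \Cref{theta}.

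By \Cref{delta_rep}, $\delta_p$ corresponds to the virtual representation $V_p=\frac{1}{p-1}(\CC[\Sigma_p/C_p]-\CC[\Sigma_p])$. The regular representation term gives $x\mapsto x^p$. For the other term, $(\CC[\Sigma_p/C_p]\otimes X^{\otimes p})_{h\Sigma_p}\simeq (X^{\otimes p})_{hC_p}$, so it gives $\widehat\beta_{C_p}^{\mathrm{triv}}$. Thus on $\uK(\cC)$ we can take
\[\delta_p(x)=\frac{1}{p-1}\Big(\widehat\beta_{C_p}^{1}(x)-x^p\Big).\]
Here $\frac1{p-1}$ and the subtraction are carried out in the $E_\infty$-group structure of $\uK(\cC)$. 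To make sense of $\frac{1}{p-1}$ I would use that $\cC$ is $\QQ$-linear, so $K_0(\cC)$, and in fact every homotopy group of $K(\cC)$, is a $\QQ$-vector space. The operation $x\mapsto x^p$ is the polynomial operation $\widehat\beta_{e}^{\mathrm{triv}}$ for the trivial group acting on $[p]$. To give an honest map of spaces, I would take the definition of $\delta_p$ on $\uK(\cC)$ to be exactly this formula, since that is how the $\lambda$-operations are realized through polynomial functoriality in \Cref{rational_lambda}.

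Both $C_p$ and the trivial group are $p$-groups, so \Cref{lnf_comparison} applies to each term. It gives commutative squares relating $\widehat\beta_{C_p}^{1}$ to $\int_{BC_p}x^p$ and $x^p$ to $x^p$, where the latter is also compatible because the localization map is a ring map. The localization map $\uK(\cC)\to\uK_{T(1)}(\cC)$ is an infinite loop map, so it commutes with sums, negatives and division by $p-1$. Therefore the image of $\delta_p$ is $\frac{1}{p-1}\big(\int_{BC_p}x^p-x^p\big)$.

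The main obstacle is the comparison with $\theta(x)=\frac{1}{p|BC_p|-1}\big(-|BC_p|x^p+\int_{BC_p}x^p\big)$. These agree exactly when $|BC_p|=1$ on $K_{T(1)}(\cC)$, which would make $p|BC_p|-1=p-1$. I would argue as follows. $K_{T(1)}(\cC)$ is an algebra over $K_{T(1)}(\QQ)$, and $|BC_p|$ is natural, so it suffices to compute it for $\cC=\Perf_\QQ$. By \Cref{ktheory_comparison} with $V$ trivial and $n=0$, $|BC_p|$ applied to $1$ is the class of $\one_{hC_p}$, which is $\one$ rationally. So $|BC_p|\cdot 1=1$, and hence $|BC_p|=1$ by linearity. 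With that in hand, the two formulas coincide as maps of spaces, and the square commutes.
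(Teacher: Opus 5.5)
Your route is the same as the paper's. You split $\delta_p$ via \Cref{delta_rep} into $\widehat\beta^{1}_{C_p}$ and $x\mapsto x^p$, push both through \Cref{ktheory_comparison} and \Cref{lnf_comparison}, and reduce to $|BC_p|=1$. Your proof of that last point is a fine variant of the paper's. You compute $\int_{BC_p}1$ as the image of $[\one_{hC_p}]=[\one]$ and then use that $|BC_p|$ acts by multiplication by this element. The paper instead notes that the cardinality of $BC_p$ acts on $\cC$ by $BC_p\otimes(-)\simeq\mathrm{id}$, and that the $1$-semiadditive functor $K^{[1]}$ carries it to $|BC_p|$. Your reduction to $\QQ$ is unnecessary, since $(\one_\cC)_{hC_p}\simeq\one_\cC$ already holds in $\cC$.

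There is, however, a false step. $\QQ$-linearity of $\cC$ does not make $K(\cC)$ rational: for perfect $\QQ$-modules, $K_0\cong\ZZ$ and $K_1\cong\QQ^\times$, and neither is a $\QQ$-vector space. So $\frac{1}{p-1}$ is not an operation on $\uK(\cC)$. Your definition of $\delta_p$ ``as exactly this formula'' is therefore not meaningful, and there is no division by $p-1$ for the localization map to commute with.

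The repair is as follows.
\begin{itemize}
\item $V_p$ is an honest integral virtual representation: $V_p=-W$, with $W$ a rational form of $\Ind_{C_p}^{\Sigma_p}\rho$ (see the remark after \Cref{delta_rep}; e.g.\ induce $\QQ(\zeta_p)$ from $C_p\rtimes\FF_p^\times$). So set $\delta_p:=-\widehat\beta^{W}_{\Sigma_p}$.
\item A character check gives $\QQ[\Sigma_p]\cong\QQ[\Sigma_p/C_p]\oplus W^{\oplus(p-1)}$. Hence there is a natural splitting $X^{\otimes p}\simeq(X^{\otimes p})_{hC_p}\oplus\widehat\beta^{W}_{\Sigma_p}(X)^{\oplus(p-1)}$.
\item Polynomial functoriality sends a direct sum of functors to the sum of the induced maps: factor $F\oplus G$ as diagonal, then product, then $\oplus$. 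Therefore $(p-1)\,\delta_p\simeq\widehat\beta^{1}_{C_p}-(-)^p$ on $\uK(\cC)$.
\item This relation is exactly what you need, because $\Sigma_p$ is not a $p$-group, so \Cref{ktheory_comparison} cannot be applied to $\widehat\beta^{W}_{\Sigma_p}$ directly.
\item Let $L\colon\uK(\cC)\to\uK_{T(1)}(\cC)$ be the localization, an infinite loop map. Applying $L$ and the comparison theorems, together with $|BC_p|=1$, gives $(p-1)\,(L\circ\delta_p)\simeq(p-1)\,(\theta\circ L)$.
\item $T(1)$-local spectra are $p$-complete, so multiplication by $p-1$ is an equivalence of $\uK_{T(1)}(\cC)$. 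Postcomposition with it is then invertible on maps $\uK(\cC)\to\uK_{T(1)}(\cC)$, and the factor $p-1$ can be cancelled.
\end{itemize}
The paper's displayed formula for $\delta_p$ is best read in this way. With this fix, the rest of your argument goes through.
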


\begin{proof}
    By \Cref{delta_rep} and the construction of the $\lambda$-ring operations on $\uK(\cC)$ in \Cref{rational_lambda}, we see that the operation $\delta_p$ on $\uK(\cC)$ is given by
    \[\delta_p(x)=\frac{1}{p-1}\big(\widehat\beta_{C_p}^{[p]}(x)-x^p\big).\]
    Next, note that the cardinality $|BC_p|$ acts on $\cC$ by $BC_p\otimes-$ (\cite[Example 2.2.8]{ambiheight}), which is the identity because $\cC$ is rational. Since $K^{[1]}$ is a $1$-semiadditive functor, we get that $|BC_p|$ is the identity on $K^{[1]}(\cC)$. Thus the formula for $\theta$ (\Cref{theta}) simplifies to the above expression, and the result follows from \Cref{ktheory_comparison} and \Cref{lnf_comparison}.
\end{proof}

By \Cref{theoremBproof}, power operations on $T(1)$-local commutative ring spectra are precisely $\delta$-ring operations. Since $\delta$-rings are $p$-typical $\lambda$-rings (\Cref{ptypical}), power operations in chromatic height $1$ are exactly the $p$-typical version of power operations on the $K$-theory of rational commutative ring spectra. This is an instance of the redshift philosophy. \Cref{redshift_delta} further shows that the localization map $K(R)\to K_{T(1)}(R)$ indeed intertwines these $p$-typical $\lambda$-ring structures.

\bibliographystyle{alpha}
\bibliography{bibliography}
\end{document}